\newtheorem{thm}{Theorem}[section]
\newtheorem{prop}[thm]{Proposition}
\newtheorem{lem}[thm]{Lemma}
\newtheorem{cor}[thm]{Corollary}
\theoremstyle{definition}
\newtheorem{Def}[thm]{Definition}
\newtheorem{exmp}[thm]{Example}
\newtheorem{rem}[thm]{Remark}
\DeclareMathOperator*{\colim}{colim}
\newcommand{\Ss}{\mathcal {S}}
\newcommand{\G}{GL_1(R)}
\newcommand{\M}{\mathrm{Mod}_R}
\newcommand{\Si}{\Sigma^{\infty}_+}
\newcommand{\T}{\mathrm{Th}}
\newcommand{\E}{\mathbb{E}_{\infty}}
\newcommand{\Tot}{\mathrm{Tot}}
\newcommand{\threerightarrows}{%
\mathrel{\vcenter{\hbox{$\rightrightarrows$}\nointerlineskip\hbox{$\rightarrow$}}}
}
\newcommand{\fourrightarrows}{%
\mathrel{\vcenter{\hbox{$\threerightarrows$}\nointerlineskip\hbox{$\rightarrow$}}}
}
\keywords{topological coHochschild homology, Thom spectra.}
\numberwithin{equation}{section}
\title{Topological coHochschild homology and  Thom spectra}
\author[J. Zha]{Jiaxi Zha}
\address{Department of Mathematics, Nankai University, No.94 Weijin Road, Tianjin 300071, P. R. China}
\email{1093913699@qq.com}
\begin{document}

\begin{abstract}
   For an $\E$-ring spectrum $R$ and a map $f:X\to Pic(R)$ of spaces, the Thom spectrum $\T f$ is a comodule over $R\otimes\Si X$. In this paper we study the topological coHochschild homology of $R\otimes\Si X$ with coefficient $\T f$. More concretely, for a simply connected space $X$, we will give a filtration on $\mathrm{coTHH}^R(\T f;R\otimes\Si X)$ via the cellular structure of $X$. Furthermore, we will reduce the computation of $\mathrm{coTHH}^R(\T f;R\otimes\Si X)$ to that of $\mathrm{coTHH}^R(R\otimes\Si G)$ for some group-like $\mathbb{E}_1$-spaces. Finally, we will use these results to study properties of $\mathrm{coTHH}^R(\T f;R\otimes\Si X)$.
\end{abstract}

\maketitle

\section{Introduction}

Just as topological Hochschild homology (THH) generalizes Hochschild homology (HH) from  discrete algebras to ring spectra, topological coHochschild homology (coTHH) generalizes coHochschild homology (coHH) from discrete coalgebras to coalgebra spectra. As an invariant of discrete coalgebras, coHH was first introduced by Doi in \cite{Doi81}, and was later extended to differential graded coalgebras by Hess--Parent--Scott in \cite{HPS09}. It is noteworthy that in the same paper, the authors also established a quasi-isomorphism $\mathrm{coHH}(BA)\simeq \mathrm{HH}(A)$ for a non-negative augmented chain algebra $A$, where $BA$ is the bar construction for $A$. More recently, dual to THH for ring spectra, Hess--Shipley \cite{HS21} introduced coTHH for coalgebra spectra. Hess--Shipley \cite{HS21} also proved that there is an equivalence
    \[\mathrm{coTHH}(\Si X)\simeq \Si LX\]
    for simply connected spaces where $LX$ is the free loop space of $X$. Consequently, similar to the quasi-isomorphism in \cite{HPS09}, one obtains an instance of Koszul duality between THH and coTHH: $\mathrm{coTHH}(\Si X)\simeq \mathrm{THH}(\Si\Omega X)$. Beyond the Koszul duality, coTHH can also be related to THH via the Spanier--Whitehead duality under certain finiteness assumptions. In fact, it follows immediately from the definition that if $C$ is an $\mathbb{E}_1$-coalgebra whose underlying spectrum is dualizable, then $C^{\vee}$ is an $\mathbb{E}_1$-algebra and $\mathrm{coTHH}(C)\simeq \mathrm{THH}(C^{\vee})^{\vee}$. Further details on the connection between the Spanier--Whitehead duality and coTHH can be found in \cite{BP23}. There has already been a series of computations on coTHH. A key tool in the computation of coTHH, as that of THH, is the coB\"okstedt spectral sequence developed by Bohmann--Gerhardt--H\o genhaven--Shipley--Ziegenhagen in \cite{BGT+18}. The coB\"okstedt spectral sequence was further studied by Bohmann--Gerhardt--Shipley in \cite{BGS22}, where they proved that, under certain coflatness conditions, it admits a so-called $\square$-Hopf algebra structure. Using this structure, they generalized earlier homology computations of \cite{KY97} for various free loop spaces.

More generally, we can consider coTHH in general symmetric monoidal categories. Since the opposite category of a symmetric monoidal category is still symmetric monoidal, all constructions above are special cases of THH for algebras in a symmetric monoidal category developed by Nikolaus and Scholze in \cite{NS18}. Similarly, the structure on THH with coefficients is axiomatized in \cite{KMN23}. In \cite{Blu10-1} and \cite{Blu10-2}, Blumberg provided a detailed investigation of THH of Thom spectra. He proved that, for instance, for a $3$-fold loop map $f\colon X\to B\G$ with $X$ connected, there is an equivalence
\begin{equation}\label{eq:THH}
	\mathrm{THH}(\T f/R)\simeq \T f\otimes \Si BX.
\end{equation}
Although $\T f$ is always an $R$-algebra whenever $f$ is an $\mathbb{E}_1$-map, it is generally not a coalgebra. However, $\T f$ is always an $R\otimes\Si X$-comodule. In this paper, we consider coTHH with Thom spectra coefficients, which, as noted above, can be viewed as THH with coefficients in the opposite category of $\M$. Denote coTHH of $\Si X$ with the coefficient $\T f$ for a map $f\colon X\to Pic(R)$ by $\mathrm{coTHH}^R(\T f;R\otimes\Si X)$. We will show that, when $X$ is a simply connected $\mathbb{E}_1$-space, the dual to Equivalence \ref{eq:THH} holds (Proposition \ref{prop:omega}):
\[\mathrm{coTHH}^R(\T f;R\otimes\Si X)\simeq \T f\otimes \Si \Omega X.\]
Consequently, we restrict our attention to the case of general simply connected spaces. For this purpose, we need to understand the comodule structure of the Thom spectrum. This has been studied by Beardsley in \cite{Bea23}, in which he considered $\mathbb{E}_n$-ring spectra for $1\leq n\leq \infty$. We restrict our attention here to $\E$-ring spectra and connected spaces, although our results can be extended to the $\mathbb{E}_n$-setting and non-connected spaces. We will also prove that the comodule structure we obtain coincides with the one in \cite{Bea23}, see Proposition \ref{prop:compare}. 

\begin{thm}\label{thm:comodule}
	Let $R$ be an $\E$-ring spectrum and $X$ be a connected space. Given a map of spaces $f\colon X\to B\G$, the Thom spectrum $\T(R_X)\simeq R\otimes\Si X$ is an $\E $-R-coalgebra, and $\T f$ is a $\T(R_X)$-comodule, where $R_X$ is the constant local system taking the value $R$.
\end{thm}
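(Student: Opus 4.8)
The plan is to produce both structures at once as the values of a single oplax symmetric monoidal functor, namely the colimit functor computing the Thom spectrum. Write $\w f\colon X\to\M$ for the composite $X\xr{f}B\G\to\mathrm{Pic}(R)\ha\M$, so that by definition of the Thom spectrum $\T f=\colim_X\w f=p_!\w f$, where $p\colon X\to *$ is the projection and $p_!$ denotes left Kan extension along $p$. Endow $\mathrm{Fun}(X,\M)$ with the pointwise tensor product over $R$ — equivalently, $\Delta^*$ of the external product along the diagonal $\Delta\colon X\to X\times X$ — a symmetric monoidal structure whose unit is the constant system $R_X=p^*R$; its Thom spectrum is $p_!R_X=\colim_X R\simeq R\otimes\Si X$, which gives the equivalence $\T(R_X)\simeq R\otimes\Si X$ of the statement.

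Since the constant–diagram functor $p^*\colon\M\to\mathrm{Fun}(X,\M)$ is strong symmetric monoidal, its left adjoint $p_!=\colim_X$ carries a canonical oplax symmetric monoidal structure (the left adjoint of a strong symmetric monoidal functor is canonically oplax symmetric monoidal). Now an oplax symmetric monoidal functor $F\colon\mathcal C\to\mathcal D$ sends the unit $\mathbf 1_{\mathcal C}$ — a cocommutative coalgebra — to a cocommutative coalgebra $F(\mathbf 1_{\mathcal C})$, with comultiplication $F(\mathbf 1_{\mathcal C})\simeq F(\mathbf 1_{\mathcal C}\otimes\mathbf 1_{\mathcal C})\to F(\mathbf 1_{\mathcal C})\otimes F(\mathbf 1_{\mathcal C})$ and counit the structure map $F(\mathbf 1_{\mathcal C})\to\mathbf 1_{\mathcal D}$; moreover every object $c$, being canonically a comodule over $\mathbf 1_{\mathcal C}$, is sent to $F(c)$ equipped with an $F(\mathbf 1_{\mathcal C})$-comodule structure $F(c)\simeq F(\mathbf 1_{\mathcal C}\otimes c)\to F(\mathbf 1_{\mathcal C})\otimes F(c)$. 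This is dual to the familiar fact that a lax symmetric monoidal functor sends the unit to a commutative algebra and every object to a module over it. Applying it to $F=p_!$ shows that $\T(R_X)=p_!R_X$ is a cocommutative coalgebra in $\M$, i.e.\ an $\E$-$R$-coalgebra, and that $\T f=p_!\w f$ is a $\T(R_X)$-comodule in $\M$. (The same $\E$-$R$-coalgebra structure is visibly obtained by applying the symmetric monoidal composite $R\otimes_{\mathbb S}\Si(-)\colon(\mathcal S,\times)\to(\M,\otimes_R)$ to $X$, which is a cocommutative comonoid via its diagonal.)

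What remains — and what I expect to be the main obstacle — is to check that these formally produced structures coincide with the geometric ones, so that in particular they match the comodule structure of \cite{Bea23} in Proposition~\ref{prop:compare}. For this one must unwind the oplax structure of $\colim_X$: writing $p_{X\times X}\colon X\times X\to *$, its coherence map is the composite $p_!(A\otimes B)=p_!\Delta^*(A\boxtimes B)\to (p_{X\times X})_!(A\boxtimes B)\simeq(\colim_X A)\otimes_R(\colim_X B)$, in which the first arrow is $(p_{X\times X})_!$ applied to the counit $\Delta_!\Delta^*\Rightarrow\mathrm{id}$ and the second is the K\"unneth equivalence, valid because $\otimes_R$ preserves colimits in each variable. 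Taking $A=B=R_X$ recovers the diagonal comultiplication $R\otimes\Si X\to R\otimes\Si(X\times X)\simeq(R\otimes\Si X)\otimes_R(R\otimes\Si X)$, and taking $A=\w f$, $B=R_X$ together with $\w f\simeq\Delta^*(\w f\boxtimes R_X)$ gives the coaction $\T f\to\colim_{X\times X}(\w f\boxtimes R_X)\simeq\T f\otimes_R\T(R_X)$. Everything else is soft — the existence of the oplax symmetric monoidal structure on $\colim_X$ and the passage from oplax functors to comodules over the image of the unit are formal — so the work is entirely in matching the Kan-extension coherence data with the diagonal-induced maps. Finally, the hypotheses that $X$ be connected and that $f$ factor through $B\G$ rather than $\mathrm{Pic}(R)$ are inessential above; they only streamline notation, and the argument applies verbatim to an arbitrary map $f\colon X\to\mathrm{Pic}(R)$.
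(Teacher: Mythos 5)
Your argument is correct, but it takes a genuinely different route from the one in the paper. The paper works in the slice category $\Ss_{/B\G}$ equipped with the symmetric monoidal structure of Lemma \ref{lem:sym}, proves that the Thom spectrum functor is \emph{strongly} symmetric monoidal on that category (Proposition \ref{prop:sym str}), and then transports structure along $\T$: the coalgebra structure on $\T(R_X)=\T(e_!X)$ comes from the diagonal of $X$ as a coalgebra in $\Ss$, and the comodule structure on $f$ is produced inside $\mathrm{Fun}(B\G^{\mathrm{op}},\Ss)$ via the projection formula of Lemma \ref{lem:proj} together with the counit $Y\to p^*p_!Y$ applied to the fibre $Y=\mathrm{fib}(f)$. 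You instead endow $\mathrm{Fun}(X,\M)$ with the pointwise tensor product and use the canonical oplax symmetric monoidal structure on $p_!=\colim_X$ as left adjoint of the strong symmetric monoidal $p^*$; this is essentially the construction of \cite[Corollary 4.14]{Bea23}, which the paper treats as the \emph{alternative} comodule structure and reconciles with its own only in Proposition \ref{prop:compare}. Your route is shorter and, as you note, needs neither connectedness of $X$ nor the factorization through $B\G$; moreover your unwinding of the oplax coherence map via the counit of $\Delta_!\dashv\Delta^*$ and the K\"unneth equivalence does recover the diagonal-induced coaction of Diagram \ref{eq:comod}, so the ``remaining obstacle'' you flag is exactly the content of Proposition \ref{prop:compare} and is settled there. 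What the paper's heavier setup buys is used later: strong (rather than oplax) monoidality of $\T$ on $\Ss_{/B\G}$ is what allows the paper to realize the entire cosimplicial object $\mathrm{coTHH}^R_*(\T f;R[X])$ as the image under $\T$ of a cosimplicial object in $\Ss_{/B\G}$ (Example \ref{exmp:over BG}, Theorems \ref{thm:S1} and \ref{thm:geo}), which a purely oplax structure on $\colim_X$ would not immediately provide. The one point you should cite rather than assert is the coherent $\infty$-categorical statement that the left adjoint of a strong symmetric monoidal functor is oplax symmetric monoidal and that oplax functors preserve cocommutative coalgebras and their comodules; these facts are standard but not contentless.
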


Consequently, we can consider topological coHochschild homology of $R\otimes\Si X$ with coefficient $\T f$. An important property for $\T f$ is that if $f$ is $A$-oriented for some $R$-algebra $A$, then there is an equivalence of $A$-modules $A\otimes_R\T f\simeq A\otimes\Si X$. We prove that this property is compatible with coTHH with Thom spectra coefficients in the following sense.

\begin{thm}\label{thm:orient}
	Let $R$ be a connective $\E$-ring spectrum and $X$ be a simply connected space. Given a connective $\E$-$R$-algebra $A$ and a map $f\colon X\to B\G$, there is an equivalence
	\[A\otimes_R\mathrm{coTHH}^R(\T f;R\otimes\Si X)\simeq A\otimes\mathrm{coTHH}(\Si X)\]
	if $f$ is $A$-oriented. Here we write $\mathrm{coTHH}(\Si X)$ for $\mathrm{coTHH}^{\mathbb{S}}(\Si X)$.
\end{thm}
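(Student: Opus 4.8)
\emph{Proof proposal.} The plan is to realize both sides as totalizations of cyclic cobar constructions and to reduce the theorem to showing that the base-change functor $A\otimes_R(-)$ commutes with those totalizations; the simple connectivity of $X$ is what supplies the required convergence, exactly as in the convergence of the coB\"okstedt spectral sequence of \cite{BGT+18}.

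Write $\mathcal{C}^\bullet$ for the cosimplicial $R$-module computing $\mathrm{coTHH}^R(\T f;R\otimes\Si X)$, so that $\mathcal{C}^n\simeq \T f\otimes_R(R\otimes\Si X)^{\otimes_R n}$ with cofaces built from the comodule coaction $\T f\to \T f\otimes_R(R\otimes\Si X)$ of Theorem~\ref{thm:comodule}, the comultiplication on $R\otimes\Si X$ induced by the diagonal of $X$, and the cyclic permutation, and with codegeneracies induced by the counit $R\otimes\Si X\to R$; thus $\mathrm{coTHH}^R(\T f;R\otimes\Si X)=\Tot\,\mathcal{C}^\bullet$. First I would apply $A\otimes_R(-)$ levelwise. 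Since $R\otimes\Si X=R\otimes_{\mathbb S}\Si X$ and the whole coalgebra structure is base-changed from the $\E$-$\mathbb{S}$-coalgebra $\Si X$, this produces the cosimplicial $A$-module with $n$-th term $(A\otimes_R\T f)\otimes_A(A\otimes\Si X)^{\otimes_A n}$, all structure maps base-changed, where $A\otimes_R\T f$ is now a comodule over the $\E$-$A$-coalgebra $A\otimes\Si X$. Now I would insert the orientation: being $\E$-$A$-oriented refines the $A$-module equivalence $A\otimes_R\T f\simeq A\otimes\Si X$ recalled above to an equivalence of comodules over $A\otimes\Si X$, the right-hand side carrying the tautological comodule structure $A\otimes\Si X=\T(A_X)$ (this is where the identification of the comodule structure of Theorem~\ref{thm:comodule} with the one of \cite{Bea23}, via Proposition~\ref{prop:compare}, is used). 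Transporting along this equivalence identifies $A\otimes_R\mathcal{C}^\bullet$ with the cyclic cobar cosimplicial $A$-module of the $\E$-$A$-coalgebra $A\otimes\Si X$ with itself as coefficient, i.e. the one computing $\mathrm{coTHH}^A(A\otimes\Si X)$; and since $A\otimes\Si X=A\otimes_{\mathbb S}\Si X$ together with its coalgebra structure is base-changed from $\Si X$ over $\mathbb{S}$, that cosimplicial object is $A\otimes_{\mathbb S}(-)$ applied levelwise to the cyclic cobar cosimplicial $\mathbb{S}$-module of $\Si X$, whose totalization is $\mathrm{coTHH}(\Si X)$.

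It remains to commute $\Tot$ past $A\otimes_R(-)$ and past $A\otimes_{\mathbb S}(-)$, and this is the only substantial point. Here I use simple connectivity: the reduced coalgebra $\overline{R\otimes\Si X}:=\mathrm{fib}(R\otimes\Si X\to R)\simeq R\otimes\Sigma^\infty X$ is $1$-connected since $R$ is connective and $\Sigma^\infty X$ is $1$-connected, so the conormalization $\w{\mathcal{C}}^n\simeq \T f\otimes_R(\overline{R\otimes\Si X})^{\otimes_R n}$ is $(2n-1)$-connected (using that $\T f$ is connective), whence the $n$-th layer $\mathrm{fib}(\Tot_n\mathcal{C}^\bullet\to\Tot_{n-1}\mathcal{C}^\bullet)\simeq\Omega^n\w{\mathcal{C}}^n$ is $(n-1)$-connected and $\mathrm{fib}(\Tot\,\mathcal{C}^\bullet\to\Tot_n\mathcal{C}^\bullet)$ is $n$-connected. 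Because $R$ and $A$ are connective, $A\otimes_R(-)$ preserves connectivity and therefore preserves the connectivity of these fibers; consequently the $\Tot$-tower of $A\otimes_R\mathcal{C}^\bullet$ is pro-constant in each homotopy group, its $\lim^1$ terms vanish, and $A\otimes_R\Tot\,\mathcal{C}^\bullet\simeq\Tot(A\otimes_R\mathcal{C}^\bullet)$. The identical argument applied to the cyclic cobar of $\Si X$ over $\mathbb{S}$ gives $\Tot\bigl(A\otimes_{\mathbb S}(\text{cobar of }\Si X)\bigr)\simeq A\otimes\mathrm{coTHH}(\Si X)$. Chaining the three identifications yields
\[A\otimes_R\mathrm{coTHH}^R(\T f;R\otimes\Si X)\ \simeq\ \mathrm{coTHH}^A(A\otimes\Si X)\ \simeq\ A\otimes\mathrm{coTHH}(\Si X).\]

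The main obstacle is precisely this last paragraph. In contrast with $\mathrm{THH}$, which is a geometric realization and hence commutes with any symmetric monoidal left adjoint for free — this is what makes Equivalence~\ref{eq:THH} available after base change — $\mathrm{coTHH}$ is a totalization, so commuting the left adjoint $A\otimes_R(-)$ past it is false in general and genuinely requires the connectivity input above; simple connectivity of $X$ and connectivity of $R$ and $A$ are used for nothing else. A secondary subtlety is verifying that the orientation equivalence is compatible with the \emph{entire} cobar structure rather than merely with underlying $A$-modules, which is handled by the $\E$ in ``$\E$-$A$-oriented'' together with Proposition~\ref{prop:compare}.
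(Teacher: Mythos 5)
Your overall strategy coincides with the paper's: base-change the cyclic cobar construction levelwise, use the orientation to identify the resulting cosimplicial $A$-module with the one computing $\mathrm{coTHH}^A(A[X])$, and commute $A\otimes_R(-)$ past $\Tot$ by a connectivity argument. Your last paragraph is correct and is essentially the content of the paper's Proposition \ref{prop:base change} (the layers of the $\Tot$-tower are $\Omega^n$ of $n$-fold tensor powers of the $2$-connective coaugmentation coideal, base change along connective rings preserves connectivity, and the Milnor sequence finishes); the paper simply invokes that proposition twice, once over $R$ and once over $\mathbb{S}$.

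The gap is in the middle step. You assert that an $\E$-$A$-orientation refines the $A$-module equivalence $A\otimes_R\T f\simeq A\otimes\Si X$ to an equivalence of $A[X]$-comodules (in fact you need an identification of the entire cyclic cobar diagram, involving the bicomodule structure, the comultiplication and the cyclic symmetry, not just a one-sided coaction), and you cite Proposition \ref{prop:compare} for this. That proposition only shows that two constructions of the $R[X]$-comodule structure on $\T f$ agree with each other; it says nothing about how an orientation interacts with that structure, so as written the crucial compatibility is unproven. The paper closes this gap at the level of local systems: every term of $\mathrm{coTHH}^R_*(\T f;R[X])$ is $\T$ applied to a map $(f,R_X,\dots,R_X)\colon X^{\times n}\to Pic(R)$; each such map lifts to $B(R,A)$ because $f$ and $R_X$ do and $B(R,A)$ is a symmetric monoidal subgroupoid of $Pic(R)_{/A}$; and since $B(R,A)\simeq Pic(R)\times_{Pic(A)}B(A,A)$ with $B(A,A)$ contractible, every composite $\mathrm{Ind}_R^A\circ(f,R_X,\dots,R_X)$ is equivalent to a constant functor. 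Because all cosimplicial structure maps are induced by maps of spaces and $\T$ is symmetric monoidal, this identifies the whole base-changed cosimplicial object with $\mathrm{coTHH}^A_*(A[X])$ in one stroke. Your argument becomes complete once you replace the appeal to Proposition \ref{prop:compare} with this (or an equivalent) verification.
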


An important example of a Thom spectrum is the homotopy orbit: let $R$ be an $\E$-ring spectrum with a $G$-action, regarded as a functor $R:BG\to \M$. By definition, its $G$-orbit is the colimit of this functor, hence $R_G$ is a Thom spectrum. This is because $R$, as a module over itself, is invertible. Therefore, we can also consider $\mathrm{coTHH}^R(R_G;R\otimes\Si BG)$ as an invariant of $R_G$. The following theorem deals with the case $G=\Omega X$ for a simply connected space $X$. We construct a filtration on $\mathrm{coTHH}^R(\T f;R\otimes\Si X)$ using the cellular structure of $X$. The same method can be applied to more general cases to construct analogous filtrations on coTHH with Thom spectra coefficients.

\begin{thm}\label{thm:S1}
	Let $R$ be a connective $\E$-ring and $X$ be a simply connected space. Given a map $f\colon X\to B\G$, the $R$-module $\mathrm{coTHH}^R(\T f;R\otimes\Si X)$ admits an exhaustive filtration 
	\[\{\mathrm{coTHH}^R(\T (f|_{X_n});R\otimes\Si X)\}_{n\geq0}.\]
	Furthermore, the $n$-th associated graded of this filtration is given by
	\[gr_n\simeq \bigoplus_{X(n)}\Sigma^{n}R\otimes \Si \Omega X,\ n\geq0,\]
	where $X(n)$ is the number of $n$-cells in $X$.
\end{thm}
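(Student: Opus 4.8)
The plan is to derive the filtration from the skeletal filtration of $X$ and to identify its associated graded by a cofiber-sequence argument. Fix a CW structure on $X$ with $X_0=X_1=\{x_0\}$ a point (possible since $X$ is simply connected), write $C:=R\otimes\Si X=\T(R_X)$ for the $\E$-$R$-coalgebra of Theorem \ref{thm:comodule}, and recall that $\mathrm{coTHH}^R(M;C)=\Tot\bigl([k]\mapsto M\otimes_R C^{\otimes_R k}\bigr)$ (the cyclic cobar construction), functorially in the comodule $M$. Each inclusion $X_n\hookrightarrow X$ induces a coalgebra map $R\otimes\Si X_n\to C$; corestricting along it turns $\T(f|_{X_n})$ into a $C$-comodule. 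Since $\T$ preserves colimits and the forgetful functor from $C$-comodules to $\M$ is conservative and preserves colimits (being left adjoint to $-\otimes_R C$), the maps $\T(f|_{X_{n-1}})\to\T(f|_{X_n})$ form a filtered diagram of $C$-comodules with colimit $\T f$. Applying $\mathrm{coTHH}^R(-;C)$ produces the candidate filtration $F_n:=\mathrm{coTHH}^R(\T(f|_{X_n});C)$ with a compatible map to $\mathrm{coTHH}^R(\T f;C)$; note $F_0=F_1\simeq\mathrm{coTHH}^R(\T(f|_{\{x_0\}});C)$.

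The crucial connectivity input is that, since $X$ is simply connected and $R$ is connective, $\overline C:=\mathrm{fib}(C\xrightarrow{\varepsilon}R)\simeq R\otimes\Sigma^\infty X$ has homotopy concentrated in degrees $\geq2$; hence the conormalization of the cyclic cobar object with connective coefficient $M$ is $M\otimes_R\overline C^{\otimes_R k}$ in degree $k$, with homotopy in degrees $\geq 2k$, so the $\Tot$-tower converges and $\pi_j\mathrm{coTHH}^R(M;C)\cong\pi_j\Tot_k$ once $k>j+1$, uniformly over connective $M$. Applied to the (connective) comodules $\T(f|_{X_n})$ and $\T f$: since a partial totalization $\Tot_k$ is a finite limit and therefore commutes with filtered colimits, $\colim_nF_n\simeq\mathrm{coTHH}^R(\T f;C)$, which is the asserted exhaustiveness.

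For the graded pieces, $\mathrm{coTHH}^R(-;C)$ is exact: the cosimplicial object $M\mapsto M\otimes_R C^{\otimes_R\bullet}$ is levelwise exact, and $\Tot$, being a limit, preserves fiber---equivalently, in the stable category $\M$, cofiber---sequences. Thus $gr_n\simeq\mathrm{coTHH}^R(Q_n;C)$ with $Q_n:=\mathrm{cofib}\bigl(\T(f|_{X_{n-1}})\to\T(f|_{X_n})\bigr)$. The skeleton $X_n\simeq X_{n-1}\cup_{\coprod_{X(n)}S^{n-1}}\coprod_{X(n)}D^n$ is a pushout in $\mathrm{Spaces}_{/X}$, and $Y\mapsto\T(f|_Y)$ preserves colimits (check after the conservative colimit-preserving forgetful functor to $\M$, where it becomes $f_!$ followed by $\T$), so $Q_n$ is the cofiber of $\T(f|_{\coprod S^{n-1}})\to\T(f|_{\coprod D^n})$. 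Each disk, being contractible, maps to $X$ null-homotopically, hence so does each attaching sphere; since $X$ is connected these maps can be taken to factor through $x_0$, so both Thom spectra, as $C$-comodules, are corestrictions along $\varepsilon'\colon R=R\otimes\Si\{x_0\}\to C$ of their underlying $R$-modules. The cofiber of $R\otimes\Si(\coprod S^{n-1})\to R\otimes\Si(\coprod D^n)$ is $\bigoplus_{X(n)}\Sigma^nR$, so---using exactness of corestriction---$Q_n\simeq\varepsilon'_{!}\bigl(\bigoplus_{X(n)}\Sigma^nR\bigr)$ as a $C$-comodule.

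It remains to evaluate $\mathrm{coTHH}^R(\varepsilon'_!(\bigoplus_{X(n)}\Sigma^nR);C)$. Its cyclic cobar object is $\bigoplus_{X(n)}\Sigma^n$ applied levelwise to that of $\varepsilon'_!R$; since each $\Tot_k$ commutes with finite direct sums (limits commute with products, which in $\M$ are finite coproducts) and with filtered colimits, the convergence above lets us pull $\bigoplus_{X(n)}\Sigma^n$ outside, yielding $\bigoplus_{X(n)}\Sigma^n\,\mathrm{coTHH}^R(\varepsilon'_!R;C)$. Finally, the cyclic cobar object with coefficient $\varepsilon'_!R$ is the $R$-linearization $R\otimes\Si(-)$ of the cobar construction $[k]\mapsto\{x_0\}\times X^{\times k}\times\{x_0\}$, whose totalization is the homotopy pullback $\{x_0\}\times^h_X\{x_0\}\simeq\Omega X$; since $X$ is simply connected, convergence of the Eilenberg--Moore/cobar spectral sequence---the same input Hess--Shipley use in \cite{HS21} for $\mathrm{coTHH}(\Si X)\simeq\Si LX$---shows $\Si$ commutes with this totalization, and the connectivity estimate lets $R\otimes(-)$ commute with it as well, so $\mathrm{coTHH}^R(\varepsilon'_!R;C)\simeq R\otimes\Si\Omega X$ and $gr_n\simeq\bigoplus_{X(n)}\Sigma^nR\otimes\Si\Omega X$. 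The main obstacle is isolating the two interchange facts---uniform convergence of the $\Tot$-tower in the comodule variable, and commutation of linearization with the cobar totalization---so that simple connectivity of $X$ enters exactly where it must; granting these, the rest is bookkeeping about Thom spectra, corestriction, and pushouts.
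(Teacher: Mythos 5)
Your proposal is correct and follows essentially the same route as the paper: the skeletal filtration of $X$, the pushout description of $X_n$ together with the null-homotopy of attaching maps to trivialize the comodule structure on the cells, a connectivity estimate on the $\mathrm{Tot}$-tower (using that $\overline{C}$ is $2$-connective and that partial totalizations are finite limits) to obtain exhaustiveness, and the identification $\mathrm{coTHH}^R(R;R[X])\simeq R[\Omega X]$ to compute the graded pieces. The only cosmetic difference is that you establish this last identification directly via Eilenberg--Moore/cobar convergence, where the paper invokes its Example 3.4 (resting on the cotensor description from \cite{Z25} and \cite[Proposition 3.5]{HS21}); the content is the same.
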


Finally, for a connected space $X$, there is an equivalence $X\simeq B\Omega X$. Therefore, the Thom spectrum of a map $f:X\to B\G$ with $X$ connected can also be viewed as an $\Omega X$-orbit of $R$. In Section $4$, we will prove that $\T f$ is equivalent to $R\otimes_{R[\Omega X]}R$ in this case. Using this equivalence, we can reduce the computation of coTHH with Thom spectra coefficients to coTHH of group rings which are regarded as $\E$-$R$-coalgebras.

\begin{thm}\label{thm:geo}
	Let $R$ be a connective $\E $-ring spectrum and $f\colon X\to B\G$ be a map of spaces with $X$ simply connected, then there is an equivalence
	\[\mathrm{coTHH}^R(\T f;R\otimes\Si X)\simeq \colim\limits_{\Delta^{op}}\mathrm{coTHH}^R(R\otimes \Si G^n)\]
    where $G$ is the loop space of $X$ and $\{\mathrm{coTHH}^R(R\otimes \Si G^n)\}_{n\geq 0}$ is a simplicial $R$-module such that in degree $n$: 
	\begin{enumerate}[(a)]
		\item the face map $d_0$ is induced by the action of $G$ on $R$,
		\item the face maps $d_i$ are induced by the multiplication of $G$ for $0< i < n$,
		\item the face map $d_n$ is induced by the trivial action of $G$ on $R$, or equivalently, by the augmentation map $R\otimes\Si G \to R$,
		\item the degeneracy maps are induced by the unit map $R\to R\otimes\Si  G$.
	\end{enumerate} 
\end{thm}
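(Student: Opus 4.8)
The plan is to realize both sides of the asserted equivalence as the two ways of evaluating a single simplicial-cosimplicial object, and then to commute the totalization defining $\mathrm{coTHH}$ past a geometric realization; the hypotheses that $R$ is connective and $X$ is simply connected are exactly what licenses that interchange.

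\emph{Step 1 (a bar resolution of $(R\otimes\Si X,\T f)$).} Since $X$ is connected, $X\simeq BG$ with $G=\Omega X$, and the analysis of Section~4 identifies $\T f$ with the twisted homotopy orbit $R\otimes_{R[G]}R$, with $R$ carrying on the left the $G$-action classified by $f$ and on the right the trivial action. Equivalently, the pair $(R\otimes\Si X,\T f)$---viewed in the category of pairs $(C,N)$ consisting of an $\E$-$R$-coalgebra $C$ and a $C$-comodule $N$, the comodule structure being that of Theorem~\ref{thm:comodule}---is the geometric realization of a simplicial pair $[n]\mapsto(D_n,N_n)$ with $D_n=N_n\simeq R\otimes\Si G^n$: here $N_n$ is $D_n$ regarded as a comodule over itself, $D_\bullet=R\otimes\Si B_\bullet G$ is the simplicial $\E$-$R$-coalgebra attached to the bar construction of the group $G$, and $N_\bullet$ is the $f$-twisted bar construction, whose face maps are $d_0=$ the $G$-action on $R$, $d_i=$ the multiplication of $G$ for $0<i<n$, and $d_n=$ the augmentation $R\otimes\Si G\to R$, with degeneracies $s_i=$ the unit $R\to R\otimes\Si G$. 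The one point requiring care is that $d_0$, which projects off the first $G$-factor on the coalgebra side while acting by $G$ on $R$ on the comodule side, is a morphism of pairs; this is a short check using that $G\to\G$ acts on $R$ through $R$-module equivalences.

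\emph{Step 2 (apply $\mathrm{coTHH}$ levelwise).} The assignment $(C,N)\mapsto\mathrm{coTHH}^R(N;C)$ is functorial for morphisms of pairs, so applying it to $[n]\mapsto(D_n,N_n)$---and using $N_n=D_n$, whence $\mathrm{coTHH}^R(N_n;D_n)=\mathrm{coTHH}^R(R\otimes\Si G^n)$---yields the simplicial $R$-module $[n]\mapsto\mathrm{coTHH}^R(R\otimes\Si G^n)$ with precisely the face and degeneracy maps in the statement: $d_0$ induced by the $G$-action on $R$ (which is what the comodule side of the resolution's $d_0$ records), $d_i$ by the multiplication of $G$, $d_n$ by the augmentation, and the degeneracies by the unit. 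Thus the right-hand side of the theorem equals $\colim_{\Delta^{op}}\mathrm{coTHH}^R(D_n,N_n)$, and it remains to show that $\mathrm{coTHH}^R$ commutes with this realization.

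\emph{Step 3 (the interchange---the main obstacle).} Since $\mathrm{coTHH}^R(N;C)=\Tot_p\bigl(N\otimes_R C^{\otimes_R p}\bigr)$ by definition and geometric realization commutes with $\otimes_R$, both of the objects to be compared are assembled from the bi-indexed object $Z^{n,p}=N_n\otimes_R D_n^{\otimes_R p}\simeq(R\otimes\Si G^n)^{\otimes_R(p+1)}$, simplicial in $n$ and cosimplicial in $p$: realizing first gives $\colim_n Z^{n,p}=\T f\otimes_R(R\otimes\Si X)^{\otimes_R p}$ (the $f$-twist affects only the first tensor factor), so $\Tot_p\colim_n Z^{n,p}=\mathrm{coTHH}^R(\T f;R\otimes\Si X)$, whereas totalizing first gives $\colim_n\Tot_p Z^{n,p}=\colim_{\Delta^{op}}\mathrm{coTHH}^R(R\otimes\Si G^n)$. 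The theorem is therefore the claim that $\Tot_p$ and $\colim_n$ may be interchanged on $Z^{\bullet,\bullet}$, and I expect this to be the crucial, most delicate point: for a fixed $n$ the totalization $\Tot_p Z^{n,p}=\mathrm{coTHH}^R(R\otimes\Si G^n)$ is not convergent in the usual sense, because $G=\Omega X$ is only connected. The resolution is a connectivity estimate that uses both hypotheses. As $R$ is connective and $X$ is simply connected, $G$ is connected, so the augmentation fibre $\overline{R\otimes\Si G}$ is $1$-connective; in the degree-$p$ normalized cyclic cobar of $R\otimes\Si G^n$ each of the $p$ reduced tensor factors must then absorb at least one copy of $\overline{R\otimes\Si G}$, which forces its $n$-normalized part to vanish for $n<p$ and to be $n$-connective for $n\ge p$. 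Hence the bigraded piece $gr_n\,gr_p$ of the combined (skeletal-in-$n$ by cobar-in-$p$) filtration on $Z^{\bullet,\bullet}$ is $(2n-p)$-connective and vanishes for $n<p$, so is at least $\tfrac12(n+p)$-connective throughout, and every homotopy group of $Z^{\bullet,\bullet}$---assembled in either order---is detected in finitely many bidegrees. Therefore $\Tot_p$ and $\colim_n$ commute, which gives $\mathrm{coTHH}^R(\T f;R\otimes\Si X)\simeq\colim_{\Delta^{op}}\mathrm{coTHH}^R(R\otimes\Si G^n)$. The remaining points---the compatibility of the $p{+}1$ bar structures carried by $Z^{\bullet,\bullet}$ and the identification of $\colim_n(D_n,N_n)$ with the comodule of Theorem~\ref{thm:comodule}---are routine once the framework is in place.
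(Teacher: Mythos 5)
Your overall architecture is the same as the paper's: resolve the pair $(R[X],\T f)$ by the bar construction on $G=\Omega X$ (so that $D_n=N_n\simeq R\otimes\Si G^n$ with the twisted/untwisted module structures you describe), apply $\mathrm{coTHH}^R$ levelwise to read off the simplicial object in the statement, and reduce everything to interchanging the cobar totalization with the bar realization on the bi-indexed object $Z^{n,p}$. You also correctly identify that interchange as the crux. The gap is in the connectivity estimate you offer for it. The claim that the doubly normalized piece in bidegree $(n,p)$ vanishes for $n<p$ is false, and with it the asserted $\tfrac12(n+p)$-connectivity. Concretely, take $n=1$: the skeletal graded piece is $\Sigma\,\overline{R\otimes\Si G^{p+1}}$, and its $p$-th conormalized part contains the summand $\Sigma\Omega^p\bigl(R\otimes\Sigma^\infty(G_+\wedge\widetilde{G}^{\wedge p})\bigr)$, which is exactly $1$-connective for every $p$ (its bottom homotopy group is built from $\pi_0R\otimes H_1(G)^{\otimes p}$ and is nonzero in general). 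The error is in assuming that the $p$ reduced cobar factors must occupy distinct bar rows; in fact a single row of $G^{p+1}$ can carry a reduced coordinate in every cobar column. Since connectivity does not grow in the $p$-direction, a fixed homotopy group is \emph{not} detected in finitely many bidegrees, and your stated justification for commuting $\Tot_p$ with $\colim_n$ collapses.

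The estimate that is actually true, and suffices, is the one-sided one: the $n$-th skeletal graded piece is $n$-connective \emph{uniformly in the cosimplicial direction}. This is how the paper argues: one first checks (via the fibre sequences for partial totalizations, using that $\overline{R\otimes\Si G^m}$ is $1$-connective because $X$ is simply connected) that every finite partial totalization $\Tot^i$ of $\mathrm{coTHH}^R_*(R\otimes\Si G^m)$ is connective; since $\Tot^i$ is a finite limit it commutes with the finite colimits computing skeleta and latching objects, so $\Tot^i$ of the $(n{+}1)$-st latching quotient is $(n{+}1)$-connective for all $i$; a Milnor $\varprojlim^1$ sequence over $i$ then shows the cofibre of $\Tot(sk_n)\to\Tot|\,\cdot\,|$ is $n$-connective, and passing to the colimit over $n$ gives the equivalence. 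So your Steps 1--2 and the identification of the face and degeneracy maps are fine (modulo the identification of the coaction with the bar-level diagonal, which the paper establishes separately and which you reasonably defer), but Step 3 needs to be replaced by this skeletal argument rather than a bidirectional connectivity count.
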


\subsection{Organization}
In Section \ref{sec2}, following \cite{ABG+}, we recall the definition of the Thom spectrum. Then we prove that the Thom spectrum $\T f$ associated to a map $f\colon X\to Pic(R)$ is an $R\otimes\Si X$-comodule and compare this comodule structure with that of \cite{Bea23}. In Section \ref{sec3}, we first compute $A$-homology of $\mathrm{coTHH}^R(\T f;R\otimes\Si X)$ for an $\E$-$R$-algebra $A$ and prove Theorem \ref{thm:orient}. We will also prove Theorem \ref{thm:S1} in this section. Finally, the proof of Theorem \ref{thm:geo} and some of its applications are given in Section \ref{sec4}.

\subsection{Conventions}
We work with $\infty$-categories throughout, following \cite{HTT,HA}. Therefore, all categories are understood to be $\infty$-categories. Moreover,
    \begin{enumerate}
        \item For an ordinary category $\mathcal{C}$, we regard it as an $\infty$-category through the nerve functor, and by abuse of notation we still denote it by $\mathcal{C}$.
    	\item For a category $\mathcal{C}$, the totalization of a cosimplicial object $X_*$ of $\mathcal{C}$ is denoted by $\mathrm{Tot}(X_*)$, while the limit restricted to \(\Delta_{\leq n}\) is denoted by $\text{Tot}^n(X_*)$. Similarly, the geometric realization of a simplicial object $Y_{\bullet}$ is denoted by $|Y_{\bullet}|$, while the colimit restricted to $\Delta^{op}_{\leq n}$ is denoted by $sk_nY_{\bullet}$. Note that cosimplicial objects are subscripted with $*$, while simplicial objects are subscripted with $\bullet$.  
    	\item The category $\Ss$ is	the category of spaces and is always equipped with the Cartesian symmetric monoidal structure.
    	\item $\E$-ring spectra are abbreviated as $\E$-rings. Given an $\E$-ring $R$ and a space $X$, the constant functor from $X$ to $\M$ with value $R$ is denoted by $R_X$. Let $A$ be an $\E$-$R$-algebra, the base change functor $-\otimes_R A$ is denoted by $\mathrm{Ind}_R^A$.
    	\item For notational uniformity, we write $R[X]$ for $R\otimes\Si X$ which is the colimit of the functor $R_X$, or equivalently, the tensor product of the spectrum $R$ and the space $X$.
    	\item Finally, a spectrum $X$ is called $n$-connective if $\pi_i(X)=0$ for $i<n$. It is said to be connective if it is $0$-connective.
    \end{enumerate}

\subsection*{Acknowledgements}
This research received no specific grant from any funding agency in the public, commercial, or not-for-profit sectors. The author would like to express sincere gratitude to Professor Xiangjun Wang for his invaluable guidance and insightful discussions throughout this work.  

\section{Thom spectra and comodules}\label{sec2}
In this section, we recall the definition of Thom spectra and show that any Thom spectrum is a comodule for its corresponding ``trivial" Thom spectrum. First, recall that $\Ss$ is the unit in $\mathrm{Pr}^L$ which is equipped with Lurie's tensor product (see \cite[Remark 4.8.1.20]{HA}). Hence for any presentable symmetric monoidal category $\mathcal{C}$, there is a unique colimit-preserving symmetric monoidal functor $\Ss \to \mathcal{C}$, given by $X \mapsto \colim_X 1_\mathcal{C}$, where $1_{\mathcal{C}}$ is the unit of $\mathcal{C}$. In particular, there is a functor $\Ss[-]\colon\mathrm{CAlg}(\Ss) \to \mathrm{CAlg}(\mathrm{Cat}_{\infty})$ which admits a right adjoint with $\mathrm{Cat}_{\infty}$ equipped with the Cartesian symmetric monoidal structure. Furthermore, $\mathrm{CAlg}(\Ss)$ contains a coreflective subcategory, $\mathrm{CAlg}^{\mathrm{gp}}(\Ss)$, consisting of group-like commutative monoids.
Now the functor $Pic\colon\mathrm{CAlg}(\mathrm{Cat}_{\infty}) \to \mathrm{CAlg}^{\mathrm{gp}}(\Ss)$ is the right adjoint to the composition
\[\Ss[-]\colon \mathrm{CAlg}^{\mathrm{gp}}(\Ss) \hookrightarrow \mathrm{CAlg}(\Ss) \to \mathrm{CAlg}(\mathrm{Cat}_{\infty}).\]
The counit induces a symmetric monoidal functor $\Ss[Pic({\mathcal{C}})] \to \mathcal{C}$. As an object of $\Ss$ all morphisms in $Pic(\mathcal{C})$ are invertible, hence there is an equivalence $\Ss[Pic({\mathcal{C}})]\simeq Pic(\mathcal{C})$ by \cite[Theorem 4.4]{Ber24}. In particular, fix an $\E $-ring $R$ and let $\mathcal{C}$ be $\M$, the category of $R$-modules. We obtain a symmetric monoidal functor $Pic(R) \to \M$, where $Pic(\M)$ is denoted by $Pic(R)$ for brevity.

\begin{rem}
	By \cite[Theorem 4.4]{Ber24}, the canonical colimit-preserving symmetric monoidal functor $\Ss \to \mathrm{Cat}_{\infty}$ is the inclusion of $\infty$-groupoids into categories, which admits a right adjoint given by taking maximal sub-groupoids. Consequently, $Pic(\mathcal{C})$ is equivalent to $(\mathcal{C}^{\simeq})^{\times}$, where $(-)^{\times}$ is the right adjoint of the inclusion $\mathrm{CAlg}^{\mathrm{gp}}(\Ss) \to \mathrm{CAlg}(\Ss)$, i.e., taking maximal group-like sub-groupoids. Moreover, the counit is the inclusion of $(\mathcal{C}^{\simeq})^{\times}$ into $\mathcal{C}$.
\end{rem}

\begin{Def}\cite[Definition 1.4]{ABG+}
	Let $f\colon X \to Pic(R) \in \Ss_{/Pic(R)}$ be a local system of invertible $R$-modules, the Thom spectrum of $f$ is defined to be the $R$-module spectrum
	\[\T f:= \mathrm{colimit}(X \to Pic(R) \to \M).\]
	The connected component of $Pic(R)$ containing the unit object $R$ is denoted by $B\G$.
\end{Def}

\begin{rem}\label{rm:col}
	Since colimits in $\Ss_{/Pic(R)}$ are computed in $\Ss$, the Thom spectrum functor $\T$ preserves colimits by \cite[Example 2.5]{HY17}. 
\end{rem}

The following lemma is a special case of \cite[Theorem 2.2.2.4]{HA}, for which we provide a more concise proof.

\begin{lem}\label{lem:sym}
	Let $Z$ be an $\E $-space, then the slice-category $\Ss_{/Z}$ has a symmetric monoidal structure which is given by the formula
	\begin{equation}\label{eq:prod/Z}
		(X \to Z)\times (Y \to Z) = (X\times Y \to Z\times Z \xrightarrow{\mu_Z} Z).
	\end{equation}
\end{lem}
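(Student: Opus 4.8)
The plan is to realize $\Ss_{/Z}$ as a presheaf $\infty$-category and transport onto it the Day convolution symmetric monoidal structure. First I would recall that, since $Z$ is a space, straightening--unstraightening (\cite{HTT}) gives an equivalence $\Ss_{/Z}\simeq\mathcal P(Z)$, where $\mathcal P(Z)=\mathrm{Fun}(Z^{op},\Ss)$ is the free cocompletion; concretely a map $p\colon X\to Z$ corresponds to its fiber functor $z\mapsto\mathrm{fib}_z(p)$, and the inverse sends a functor $F$ to the structure map $\colim_Z F\to Z$ of its unstraightening. (Here $Z\simeq Z^{op}$ since $Z$ is an $\infty$-groupoid, and every map of spaces over $Z$ is, up to equivalence, a right fibration, so this is literally the over-category.)

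Next, an $\E$-space is by definition a commutative algebra object of $(\Ss,\times)$, hence also of $(\mathrm{Cat}_\infty,\times)$, so $Z\in\mathrm{CAlg}(\mathrm{Cat}_\infty)$. The free cocompletion functor $\mathcal P\colon(\mathrm{Cat}_\infty,\times)\to(\mathrm{Pr}^L,\otimes)$ is symmetric monoidal (\cite{HA}), so it carries $Z$ to a commutative algebra object $\mathcal P(Z)\simeq\Ss_{/Z}$ of $\mathrm{Pr}^L$; that is, $\Ss_{/Z}$ acquires a closed symmetric monoidal structure, namely Day convolution, whose tensor product preserves colimits in each variable, for which the Yoneda embedding $y\colon Z\to\mathcal P(Z)$ is symmetric monoidal, and whose unit is $y(e)$ for $e$ the unit of $Z$. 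In particular $y(z_1)\otimes y(z_2)\simeq y(\mu_Z(z_1,z_2))$.

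To pin down formula \eqref{eq:prod/Z} I would then unwind these identifications on representables. Since $Z$ is an $\infty$-groupoid the slice $Z_{/z}$ is contractible, so $y(z)$ corresponds under the equivalence above to the inclusion $\{z\}\hookrightarrow Z$ of the point $z$; and any $p\colon X\to Z$ is the colimit, taken in $\Ss_{/Z}$, of the points $\{p(x)\}\hookrightarrow Z$ indexed by $x\in X$ (the $\infty$-category of elements of $z\mapsto\mathrm{fib}_z(p)$ is $X$, which is just the co-Yoneda decomposition). Combining this with the facts that $\otimes$ commutes with colimits in each variable and that colimits in $\Ss_{/Z}$ are computed in $\Ss$, the tensor product of $(X\to Z)$ and $(Y\to Z)$ becomes the colimit over $(x,y)\in X\times Y$ of the points $\{\mu_Z(p(x),q(y))\}\hookrightarrow Z$, i.e. the space $X\times Y$ equipped with the composite $X\times Y\xrightarrow{p\times q}Z\times Z\xrightarrow{\mu_Z}Z$ — which is exactly \eqref{eq:prod/Z}, with unit the basepoint $\ast\to Z$.

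The main obstacle is this last identification. One has to check carefully that the monoidal structure extracted via $\mathcal P$ is genuinely Day convolution (so that $\otimes$ is cocontinuous and Yoneda is monoidal), then track representables to points, and finally carry out the colimit bookkeeping. A more hands-on alternative that avoids the abstract input is to argue directly that the Day convolution $(\mu_Z)_!(F\boxtimes G)$ unstraightens to $X\times Y\to Z$, using Fubini for iterated colimits together with the fact that $-\times(-)$ preserves colimits in $\Ss$; this colimit computation is the technical heart of the argument in either approach, while everything else is formal.
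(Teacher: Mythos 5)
Your proof is correct, but it takes a genuinely different route from the paper's. The paper constructs the symmetric monoidal structure directly at the simplicial-set level: it models $\Ss_{/Z}$ as a simplicial commutative monoid, sending a pair of $n$-simplices $f,g\colon(\Delta^n)^{\triangleright}\to\Ss$ to an explicit extension over the join $\Delta^n\star\Delta^1$ that inserts $\mu_Z$ at the cone, so that Formula \eqref{eq:prod/Z} holds by construction. You instead transport the Day convolution structure along straightening--unstraightening, using that $\mathrm{Fun}(-^{op},\Ss)\colon(\mathrm{Cat}_{\infty},\times)\to(\mathrm{Pr}^L,\otimes)$ is symmetric monoidal, and then recover \eqref{eq:prod/Z} by writing every object of $\Ss_{/Z}$ as a colimit of points and using cocontinuity of the tensor product in each variable; all of these steps are standard and the colimit bookkeeping you flag does go through, since the projection $\Ss_{/Z}\to\Ss$ creates colimits. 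The trade-off is organizational: your argument is more conceptual and makes the subsequent Proposition \ref{prop:sym str} (symmetric monoidality of straightening) essentially tautological, but it only produces the formula up to equivalence and after identifying the transported structure with Day convolution; the paper's hands-on construction yields \eqref{eq:prod/Z} strictly at the level of simplicial sets, which is what Remark \ref{rm:rig} exploits to rigidify the structure on $(\mathrm{Set}_{\Delta})_{/Z}$ and what makes the comparison with Day convolution in Proposition \ref{prop:sym str} a nontrivial but checkable diagram chase rather than a definition.
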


\begin{proof}
	We model the symmetric monoidal structure on $\Ss_{/Z}$ by a simplicial commutative monoid. Given two $n$-simplices $f,g\colon (\Delta^n)^{\triangleright} \to \Ss$ of $\Ss_{/Z}$, the map
	\[m^{\prime}\colon (\Delta^n)^{\triangleright} \xrightarrow{\Delta} (\Delta^n)^{\triangleright} \times (\Delta^n)^{\triangleright} \xrightarrow{f\times g} \Ss \times \Ss \xrightarrow{\times} \Ss\]
	maps the $n$-simplex $\Delta^n$ to $f|_{\Delta^n} \times g|_{\Delta^n}$ and the cone to $Z \times Z$. Therefore, we have the following diagram	
\[
\xymatrix{
\Delta^0 \ar[r]^-{i} \ar[d]_-{d^1}
&(\Delta^n)^{\triangleright} \ar[d] \ar@{}[ld]|{\ulcorner} \ar@/^10pt/[rdd]^{m^{\prime}} \ar@{}@<0.5ex>[rdd]&\\
\Delta^1 \ar[r] \ar@/_10pt/[rrd]_{\mu_Z}\ar@{}@<-0.5ex>[rrd]
&\Delta^n \star \Delta^1 \ar@{>}[rd]_{m} &\\
&&\Ss
}\]
The symmetric monoidal structure takes $f$ and $g$ to
\[(\Delta^n)^{\triangleright} \cong \Delta^n \star \Delta^0 \xrightarrow{id \star d^0} \Delta^n \star \Delta^1 \xrightarrow{m} \Ss. \qedhere\]
\end{proof}

\begin{rem}\label{rm:rig}
	The simplicial category $(\mathrm{Set}_{\Delta})_{/Z}$ admits a model structure whose (co)fibrations and weak equivalences are precisely the morphisms whose images under the forgetful functor are (co)fibrations and weak equivalences, respectively. A direct verification shows that Formula \ref{eq:prod/Z} endows $(\mathrm{Set}_{\Delta})_{/Z}$ with a simplicial symmetric monoidal structure. Consequently, we can also model the symmetric monoidal structure on $\Ss_{/Z}$ by the simplicial symmetric monoidal structure on $(\mathrm{Set}_{\Delta})_{/Z}$, by \cite[Proposition 4.1.7.10]{HA}. Similarly, by \cite[Theorem B.5]{KP25-2} there is an equivalence of symmetric monoidal categories 
	\[\mathrm{Fun}(X^{op},\Ss)\simeq \mathrm{N}(\mathrm{Fun}(\mathfrak{C}[X^{op}],\mathrm{Set}_{\Delta})^c)[\mathcal{W}_{proj}^{-1}]\]
	with both categories equipped with the Day convolution, where $\mathrm{N}$ is the simplicial nerve and $\mathcal{W}_{proj}^{-1}$ is the collection of projective weak equivalences.
\end{rem}

The following proposition is well-known, but for the sake of completeness, we include its proof here.

\begin{prop}\label{prop:sym str}
	The Thom spectrum functor is symmetric monoidal when $\Ss_{/Pic(R)}$ is equipped with the aforementioned symmetric monoidal structure.
\end{prop}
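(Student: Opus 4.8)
The plan is to exhibit the Thom spectrum functor $\T\colon \Ss_{/Pic(R)}\to \M$ as a composite of symmetric monoidal functors, using the monoidal structure on the slice category furnished by Lemma~\ref{lem:sym} (taking $Z=Pic(R)$, which is indeed an $\E$-space since $Pic(\M)$ is a group-like commutative monoid). The key observation is that $\T$ factors as
\[
\Ss_{/Pic(R)}\xrightarrow{\ \colim\ }\M_{/Pic(R)\to\M}\to \M,
\]
where the first arrow sends $(f\colon X\to Pic(R))$ to the colimit of $X\to Pic(R)\to\M$ together with its canonical map to $\colim_{Pic(R)}$, and the second is composition with the symmetric monoidal functor $\Ss[Pic(R)]\simeq Pic(R)\to\M$ recalled before the definition. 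Forgetting down to $\M$ is symmetric monoidal for the obvious reason, so the whole question reduces to showing that the ``colimit over the indexing space'' functor $\Ss_{/Z}\to \mathcal{C}_{/c}$ is symmetric monoidal whenever $\Ss\to\mathcal C$ is a colimit-preserving symmetric monoidal functor with $Z$ an $\E$-object of $\Ss$ and $c$ its image.

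First I would set up the monoidal structure on $\Ss_{/Z}$ and on the target slice precisely as in Lemma~\ref{lem:sym}, both modelled (by Remark~\ref{rm:rig}) by the strict simplicial symmetric monoidal structure on $(\mathrm{Set}_\Delta)_{/Z}$, so that I can argue with an honest lax symmetric monoidal functor and then check the structure maps are equivalences. The functor $F\colon \Ss_{/Z}\to\M_{/c}$ in question sends $f\colon X\to Z$ to $\colim_X R_X = R[X]$ equipped with the map $R[X]\to R[Z]\simeq Pic(R)$ induced by $f$; laxness of $F$ comes from the canonical comparison
\[
R[X]\otimes_R R[Y]\ \simeq\ R[X\times Y]\ \longrightarrow\ R[X\times Y]
\]
followed by the map induced by $X\times Y\to Z\times Z\xrightarrow{\mu_Z}Z$, using that $R[-]=\Si(-)\otimes R$ already carries the canonical symmetric monoidal structure $\Si(X\times Y)\simeq \Si X\wedge \Si Y$ (equivalently that $\Ss\to\M$ is symmetric monoidal). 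Then I would check that this structure map is an equivalence: this is precisely the statement that $R[-]$ preserves products up to the evident equivalence, i.e. that $\Ss\to\M$ is \emph{strong} (not merely lax) symmetric monoidal, which is exactly the content of the discussion at the start of the section where $\Ss$ is the unit of $\mathrm{Pr}^L$. Finally, the unit of $\Ss_{/Z}$ is $(\ast\to Z)$ picking out the unit point, whose image is $(R\to R[Z])$ hitting the unit of $Pic(R)$ inside $\M$, so the unit constraint holds as well; composing with the strong symmetric monoidal $\M_{/c}\to\M$ and $\M_{/c}\hookrightarrow$ nothing (it is just the forgetful functor, which is strong symmetric monoidal because the monoidal structure on a slice of a symmetric monoidal category over a fixed commutative algebra object is created by the forgetful functor) gives the claim.

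The step I expect to be the main obstacle is the coherence bookkeeping: checking that the lax structure on $F$ is compatible with all the associativity and symmetry data rather than just on the level of objects, and in particular that the two slice-monoidal structures (source and target) are matched by $F$ coherently. Working strictly in $(\mathrm{Set}_\Delta)_{/Z}$ via Remark~\ref{rm:rig} largely finesses this, because there the product is computed on underlying simplicial sets and the map to $Z$ is recorded functorially via $\mu_Z$, so $F$ becomes a strict simplicial functor between simplicial symmetric monoidal categories and one only needs that the underlying functor $\mathrm{Set}_\Delta\to \M$ (a fibrant-cofibrant replacement of $\Sigma^\infty_+(-)\otimes R$) is symmetric monoidal, which is standard. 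The remaining subtlety is ensuring $R[Z]\to Pic(R)$ is an equivalence of $\E$-objects so that the target really is a slice of $\M$ over an $\E_\infty$-algebra; but this was already recorded above via \cite[Theorem 4.4]{Ber24}, so I would simply invoke it. Thus the proof is essentially ``unwind the definitions and cite that $\Ss$ is the monoidal unit of $\mathrm{Pr}^L$,'' with Remark~\ref{rm:rig} doing the strictification work.
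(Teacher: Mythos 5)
Your proposed proof has a genuine gap at its core: the functor you actually analyze is not the Thom spectrum functor. In your second paragraph you define $F$ by sending $f\colon X\to Z$ to $\colim_X R_X = R[X]$, the colimit of the \emph{constant} local system, and the lax structure map you then check is the equivalence $R[X]\otimes_R R[Y]\simeq R[X\times Y]$, i.e.\ the statement that $\Ss\to\M$ is strong symmetric monoidal. But $\T f$ is the colimit of the \emph{twisted} diagram $X\xrightarrow{f}Pic(R)\to\M$, and it differs from $R[X]$ in general (e.g.\ $MU(1)$ versus $\Sigma^\infty_+\mathbb{C}P^\infty$). Composing your $F$ with the forgetful functor from the slice therefore yields $f\mapsto R[X]$, not $f\mapsto \T f$, so you have proved the symmetric monoidality of the wrong functor. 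What actually has to be shown is an equivalence $\T f\otimes_R\T g\simeq \T(\mu_{Pic(R)}\circ(f\times g))$, and this requires two inputs you never use: that $\otimes_R$ commutes with colimits in each variable (to rewrite the left-hand side as a colimit over $X\times Y$ of $if(x)\otimes_R ig(y)$), and that the inclusion $Pic(R)\to\M$ is symmetric monoidal (to identify $if(x)\otimes_R ig(y)$ coherently with $i(\mu(f(x),g(y)))$). The coherence of this identification is exactly the hard part; the paper packages it by showing that straightening carries the slice monoidal structure of Lemma~\ref{lem:sym} to the Day convolution on $\mathrm{Fun}(Pic(R)^{op},\Ss)$, after which left Kan extension along the symmetric monoidal functor $Pic(R)\to\M$ does the rest. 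Your strictification via Remark~\ref{rm:rig} addresses coherence for the untwisted functor only, so it does not substitute for this step.

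There is also a type error worth flagging: the displayed map ``$R[X]\to R[Z]\simeq Pic(R)$'' conflates the $R$-module $R\otimes\Si Pic(R)$ with the space $Pic(R)$; the equivalence $\Ss[Pic(R)]\simeq Pic(R)$ from \cite[Theorem 4.4]{Ber24} identifies $Pic(R)$ with its image under $\Ss[-]\colon\mathrm{CAlg}(\Ss)\to\mathrm{CAlg}(\mathrm{Cat}_\infty)$, not with a spectrum. If you want to salvage your strategy of slicing the target, the object to slice over is $\T(\mathrm{id}_{Pic(R)})$ and the first functor must send $f$ to $\T f\to\T(\mathrm{id})$; but then constructing the monoidal structure on that first functor is precisely the original problem, so nothing is gained over the paper's route through straightening and Day convolution.
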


\begin{proof}
Generally, let $X$ be an $\E $-space, by the straightening and unstraightening constructions, there is an equivalence of categories 
    \[
        \begin{tikzcd}
            \Ss_{/X} \arrow[r, shift left, "St_X"] & \mathrm{Fun}(X^{\mathrm{op}}, \Ss) \arrow[l, shift left, "Un_X"].
        \end{tikzcd}
    \]
We claim that this equivalence is symmetric monoidal, where the symmetric monoidal structure on $\mathrm{Fun}(X^{\mathrm{op}},\Ss)$ is given by the Day convolution. Indeed, by Lemma \ref{lem:sym}, Remark \ref{rm:rig} and the definition of the Day convolution, it suffices to prove the following diagram commutes
\[
\xymatrix{
\Ss_{/X} \times \Ss_{/X} \ar[r]^-{\times} \ar[d]_-{St_X \times St_X}
&\Ss_{/X \times X} \ar[d]_-{St_{X \times X}} \ar[r]^-{\mu_!}
&\Ss_{/X} \ar[d]_-{St_{X}} &\\
\mathrm{Fun}(X^{\mathrm{op}}, \Ss) \times \mathrm{Fun}(X^{\mathrm{op}}, \Ss) \ar[r]_-{\times}
&\mathrm{Fun}(X^{\mathrm{op}} \times X^{\mathrm{op}}, \Ss) \ar[r]_-{\mu_!}
&\mathrm{Fun}(X^{\mathrm{op}}, \Ss)
}\]
where the lower left map is the point-wise tensor product in $\Ss$. The left diagram commutes by \cite[Remark 2.2.2.12]{HA}. Noting that $St_X\circ \mu_!\simeq St_{\mu}$, it follows from  \cite[Proposition 2.2.1.1]{HA} that the right one commutes. 

In particular, taking $X=Pic(R)$ and noting that $Pic(R) \xrightarrow{i} \M$ is symmetric monoidal, we obtain a symmetric monoidal functor $\Ss_{/Pic(R)} \xrightarrow{St_X} \mathrm{Fun}(Pic(R)^{\mathrm{op}}, \Ss) \to \M.$ It remains to show that this functor is exactly the Thom spectrum functor. Since the functor $\T$ preserves colimits, by the universal property of the left Kan extension, it suffices to prove that for any object $L\colon \Delta^0 \to Pic(R)$ of $Pic(R)$, the Thom spectrum $\T(Un_X(L))$ is $L \in \M$. This is clear, since $Un_X(L)= L \colon \Delta^0 \to Pic(R).$
\end{proof}

From now on, we only consider connected spaces and based maps in $\Ss_{/Pic(R)}$ where the base point of $Pic(R)$ is $R$. In this case, maps factor through $B\G$, the connected component of $Pic(R)$ containing the unit $R$. Note that $B\G$ itself is an $\E$-space. Consequently, we pass to the slice category $\Ss_{/B\G}$, and with this replacement, all the above discussions apply. Furthermore, the functors $St$ and $Un$ correspond to taking the fiber and the colimit, respectively. Therefore, we have the following commutative diagrams

\begin{center}
\begin{minipage}{0.53\textwidth}
\centering
\[
\xymatrix{
\Ss_{/*} \ar[r]^-{e_!} \ar[d]_-{\mathrm{fib}}
&\Ss_{/B\G} \ar[d]^-{\mathrm{fib}} &\\
\mathrm{Fun}(*,\Ss)  \ar[r]_-{e_!}
&\mathrm{Fun}(B\G^{\mathrm{op}}, \Ss),
}\]
\end{minipage}
\hfill
\begin{minipage}{0.46\textwidth}
\centering
\[
\xymatrix{
\Ss_{/*} 
&\Ss_{/B\G} \ar[l]_-{e^*} &\\
\mathrm{Fun}(*,\Ss) \ar[u]^-{\colim} 
&\mathrm{Fun}(B\G^{\mathrm{op}}, \Ss). \ar[u]_-{\colim} \ar[l]^-{e^*}
}\]
\end{minipage}
\end{center}
Here, $e\colon * \to B\G$ is the basepoint, and functors $e_!$ above and below denote the base change and the left Kan extension, respectively. Moreover, maps in the right diagram are right adjoints of the corresponding maps in the left diagram.

\begin{lem}\label{lem:proj}
	Let $X \in \mathrm{Fun}(B\G^{\mathrm{op}}, \Ss)$ and $Y \in \Ss$, there is an equivalence
	\[X\otimes e_!Y\simeq X\times Y,\]
	where $X\times Y$ is the colimit of the constant functor $Y \to \mathrm{Fun}(B\G^{\mathrm{op}}, \Ss)$ that takes the value $X$.
\end{lem}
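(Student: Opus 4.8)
The plan is to reduce the statement to the case $Y = *$ via a colimit argument, and then to identify $e_!(*)$ with the unit of the Day convolution on $\mathrm{Fun}(B\G^{\mathrm{op}}, \Ss)$. First I would record that both relevant functors preserve colimits: $e_!\colon \mathrm{Fun}(*, \Ss) \to \mathrm{Fun}(B\G^{\mathrm{op}}, \Ss)$ is a left Kan extension, hence a left adjoint, and the Day convolution $-\otimes-$ on the presentable symmetric monoidal category $\mathrm{Fun}(B\G^{\mathrm{op}}, \Ss)$ preserves colimits separately in each variable. Since any space $Y$ is the colimit $\colim_{y\in Y} *$ of the constant diagram at the point, this yields $e_! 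Y \simeq \colim_{y\in Y} e_!(*)$, and hence $X\otimes e_! Y \simeq \colim_{y\in Y}\bigl(X\otimes e_!(*)\bigr)$.

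Next I would show $X\otimes e_!(*)\simeq X$, for which it suffices to identify $e_!(*)$ with the unit $\mathbf{1}$ of the Day convolution. By Proposition \ref{prop:sym str} the straightening equivalence $\mathrm{St}_{B\G}\colon \Ss_{/B\G}\xrightarrow{\ \sim\ }\mathrm{Fun}(B\G^{\mathrm{op}},\Ss)$ is symmetric monoidal, so it carries the monoidal unit of $\Ss_{/B\G}$ to $\mathbf{1}$. By Lemma \ref{lem:sym} the unit of $\Ss_{/B\G}$ is the object $(*\to B\G)$ selecting the unit of the $\E$-structure on $B\G$; this unit object is $R\in\M$, so the map in question is exactly the basepoint $e\colon *\to B\G$. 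On the other hand, the commuting square relating $e_!$ on the slice categories to $e_!$ on the functor categories identifies $e_!(*)$ with $\mathrm{St}_{B\G}$ applied to $(e\colon *\to B\G)$, the latter being the image of the terminal object of $\Ss_{/*}$ under base change along $e$. Hence $e_!(*)\simeq\mathbf{1}$. (Equivalently, $e_!(*)$ is the presheaf represented by $e(*)$, which is the Day convolution unit since the Yoneda embedding into $\mathrm{Fun}(B\G^{\mathrm{op}},\Ss)$ is symmetric monoidal and $e(*)=R$ is the $\otimes$-unit.)

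Combining the two steps gives $X\otimes e_! Y \simeq \colim_{y\in Y}\bigl(X\otimes e_!(*)\bigr)\simeq \colim_{y\in Y} X = X\times Y$, the last identification being the definition of $X\times Y$ in the statement. The argument is essentially formal once the unit is identified; the only point requiring care is the bookkeeping needed to confirm that the basepoint $e\colon *\to B\G$ is the monoidal unit of $\Ss_{/B\G}$ --- that is, that $e$ selects the $\otimes$-unit $R$ --- and that the straightening equivalence used here is the symmetric monoidal one from Proposition \ref{prop:sym str}. I do not expect a genuine obstacle beyond this.
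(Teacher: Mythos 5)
Your proof is correct. It reaches the same conclusion as the paper but organizes the argument differently: you stay entirely inside $\mathrm{Fun}(B\G^{\mathrm{op}},\Ss)$, write $e_!Y\simeq\colim_{y\in Y}e_!(*)$, use that the Day convolution is a left adjoint in each variable, and reduce everything to the identification $e_!(*)\simeq\mathbf{1}$ of the Day unit. The paper instead transports the whole computation across the symmetric monoidal equivalence $\mathrm{fib}\dashv\colim$ into $\Ss_{/B\G}$ and reads off the answer from the explicit product formula of Lemma \ref{lem:sym}: there the product of $\colim_{B\G^{\mathrm{op}}}X\to B\G$ with $e_!Y=(Y\to *\xrightarrow{e}B\G)$ is literally $\colim_Y(\colim_{B\G^{\mathrm{op}}}X)$ because multiplication by the basepoint is homotopic to the identity, after which one only needs that the equivalence $\mathrm{fib}$ commutes with colimits. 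The two key facts are thus different in emphasis --- your argument hinges on recognizing $e_!(*)$ as the monoidal unit (which, as your parenthetical notes, is immediate from the definition of Day convolution and does not even require Proposition \ref{prop:sym str}), while the paper's hinges on the concrete formula for the slice-category product. Your route is arguably the more formal and self-contained one; the paper's has the advantage of making visible exactly where the unitality of $e$ in the $\E$-structure on $B\G$ enters, which is the same point you flag as the only step requiring care. Both are complete proofs.
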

\begin{proof}
	Since colimits in presheaf categories are computed point-wise, the underlying space of $X \times Y$ is exactly $\colim\limits_Y X \simeq \colim\limits_Y X \times * \simeq X \times Y \in \Ss.$ Furthermore, for an object $Y\in \Ss$ and a map $f:X\to B\G\in \Ss_{/B\G}$, we have $X\times e_! Y\simeq \colim\limits_Y X$ by the definition of the tensor product of the category $\Ss_{/B\G}$, since colimits in $\Ss_{/B\G}$ are also computed in $\Ss$.  Consequently, we have the following equivalences
	\[
		\begin{split}
			X \otimes e_! Y & \simeq \ \mathrm{fib}\ (\colim\limits_{B\G^{\mathrm{op}}} \ X \times \colim\limits_{B\G^{\mathrm{op}}} e_! Y)\\
			& \simeq \ \mathrm{fib}\ (\colim\limits_{B\G^{\mathrm{op}}} X \times e_! \colim\limits_{\Delta^0} Y)\\
			& \simeq \ \mathrm{fib}\ (\colim\limits_{Y} \colim\limits_{B\G^{\mathrm{op}}} X)\\
			& \simeq \ \colim\limits_{Y} (\mathrm{fib}\ (\colim\limits_{B\G^{\mathrm{op}}} X)) \\
			& \simeq X\times Y.
		\end{split}
	\]
	We justify the sequence of equivalences as follows: the first follows from the fact that functors $\mathrm{fib}$ and $\colim$ are mutual inverses and that $\colim$ is symmetric monoidal, as proven in Proposition \ref{prop:sym str}. The second relies on the assertion made prior to the lemma. Other equivalences are immediate.
\end{proof}

We are now ready to prove the main result of this section:

\begin{thm}[Theorem \ref{thm:comodule}]
	Let $R$ be an $\E$-ring spectrum and $X$ be a connected space. Given a map of spaces $f\colon X\to B\G$, the Thom spectrum $\T(R_X)\simeq R[X]$ is an $\E $-R-coalgebra, and $\T f$ is a $\T(R_X)$-comodule, where $R_X$ is the constant local system taking the value $R$.
\end{thm}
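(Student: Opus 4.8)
The plan is to exploit the symmetric monoidal structure on the Thom spectrum functor established in Proposition \ref{prop:sym str}, together with the observation that the terminal map $X \to *$ and the constant local system give rise to a canonical coalgebra structure. First I would note that for any connected space $X$, the diagonal map $\delta\colon X \to X \times X$ and the terminal map $X \to *$ make $X$ a cocommutative coalgebra in $\Ss$ (indeed, every object of $\Ss$ is uniquely such, since $\Ss$ with the Cartesian structure has every object canonically cocommutative). Applying the functor $\Ss \to \M$, $Y \mapsto R[Y]$, which is symmetric monoidal and colimit-preserving, transports this to a cocommutative coalgebra structure on $R[X] = \T(R_X)$ in $\M$; the comultiplication is the composite $R[X] \xrightarrow{R[\delta]} R[X \times X] \simeq R[X] \otimes_R R[X]$ and the counit is $R[X] \to R[*] \simeq R$. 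This gives the first assertion.

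Next I would upgrade this to the slice-category picture so that the comodule structure on $\T f$ falls out formally. The key point is that $f\colon X \to B\G$ is an object of $\Ss_{/B\G}$, and the constant local system $R_X$ is precisely the image of $X \to *$ under the base-change/unit, i.e. $R_X = e^*$ of the trivial system on a point, so that in $\Ss_{/B\G}$ the map $f$ is canonically a comodule over the ``constant'' object $(X \xrightarrow{\mathrm{triv}} *) $ pulled back to $B\G$. Concretely: the diagonal of $X$ refines to a map $X \to X \times_{B\G} X$ in $\Ss_{/B\G}$ where $X \times_{B\G} X$ denotes the monoidal product of Lemma \ref{lem:sym} applied to $f$ and to the trivial map $X \to * \to B\G$; unwinding the definition in Equation \ref{eq:prod/Z}, this product is the space $X \times X$ mapping to $B\G$ via $f$ on the first factor. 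Thus $(f\colon X \to B\G)$ is a comodule over $(R_X\colon X \to * \to B\G)$ in the symmetric monoidal category $\Ss_{/B\G}$, via $X \xrightarrow{\delta} X \times X$ together with the counit $X \to X$ (identity on the source, induced by $X \to *$). Applying the symmetric monoidal Thom spectrum functor of Proposition \ref{prop:sym str} then produces the coaction $\T f \to \T f \otimes_R \T(R_X) \simeq \T f \otimes_R R[X]$ and the counit $\T f \to \T f \otimes_R R \simeq \T f$, together with all the coherence data, since symmetric monoidal functors carry comodules to comodules.

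The remaining step is to check compatibility of the two descriptions of $R[X]$: the one as $\T(R_X)$ computed via the slice category and the monoidal structure of Lemma \ref{lem:sym}, and the one as the colimit of the constant functor $R_X$, so that the coalgebra structure obtained in the second paragraph agrees with the ``obvious'' cocommutative coalgebra structure on $R[X]$ from the first paragraph. This is handled by the commutative diagrams preceding Lemma \ref{lem:proj}, which identify $\T \circ e^*$ with $R[-]$ on $\Ss_{/* } \simeq \Ss$, combined with Lemma \ref{lem:proj} itself to identify the monoidal product of the constant systems with $R[X \times X] \simeq R[X] \otimes_R R[X]$; functoriality of $\T$ applied to $X \xrightarrow{\delta} X\times X$ and $X \to *$ then yields the comparison. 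The main obstacle I anticipate is purely bookkeeping: making sure that the monoidal product of Lemma \ref{lem:sym} applied to $f$ and the trivial system really does produce $(X\times X \xrightarrow{f\circ\mathrm{pr}_1} B\G)$ and not something twisted, and that the coherences assemble correctly — but because $\T$ is symmetric monoidal and the coalgebra/comodule structures on the space level are the canonical Cartesian ones, no genuine homotopy-coherence computation is required beyond invoking that a symmetric monoidal functor preserves coalgebras and comodules together with their module-categories structure.
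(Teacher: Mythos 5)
Your overall strategy matches the paper's: both arguments obtain the coalgebra structure on $R[X]=\T(e_!X)$ from the canonical cocommutative coalgebra structure on $X$ in the Cartesian category $\Ss$ together with the symmetric monoidality of $e_!$ and of $\T$ (Proposition \ref{prop:sym str}), and both reduce the comodule statement to exhibiting $f$ as an $e_!X$-comodule in $\Ss_{/B\G}$. Your identification of the coaction map as $\delta\colon X\to X\times X$, with $X\times X\to B\G$ given by $f\circ\mathrm{pr}_1$, is exactly the map the paper ends up with (compare diagram \ref{eq:comod} in the proof of Proposition \ref{prop:compare}).

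The gap is in the passage from this single coaction map to a fully coherent $\E$-comodule structure. You justify this by saying that the ``coalgebra/comodule structures on the space level are the canonical Cartesian ones,'' so no coherence work is needed. But the symmetric monoidal structure on $\Ss_{/B\G}$ from Lemma \ref{lem:sym} is \emph{not} Cartesian: the product of $f$ and $g$ is $X\times Y\to B\G\times B\G\xrightarrow{\mu}B\G$, not the fiber product over $B\G$. Hence objects of $\Ss_{/B\G}$ are not automatically coalgebras, $f$ is not automatically a comodule over $e_!X$, and writing down $\delta$ does not by itself supply the infinite tower of coherences; nor is there an evident single symmetric monoidal functor from a Cartesian category carrying the tautological comodule $(X,\mathrm{id}_X)$ to the pair $(f,e_!X)$. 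This is precisely the step the paper's proof spends its effort on: it straightens to $\mathrm{Fun}(B\G^{\mathrm{op}},\Ss)$, uses Lemma \ref{lem:proj} to identify all the relevant tensor powers $Y\otimes e_!X^{\otimes n}\simeq Y\times X^{\times n}$ (Day convolution against $e_!$ of a space reduces to the pointwise --- hence Cartesian --- product with a constant diagram), and thereby reduces to the honestly Cartesian statement that $Y=\mathrm{fib}(f)$ is coherently a comodule over the constant diagram $p^*p_!Y$ via the canonical map $Y\to p^*p_!Y$. You invoke Lemma \ref{lem:proj} only for the bookkeeping comparison in your last paragraph; it is in fact the engine that makes the coherent comodule structure exist at all, and without it (or an equivalent device) your second paragraph is an assertion rather than a proof.
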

\begin{proof}
Being a left Kan extension, the functor $e_!$ is symmetric monoidal. Furthermore, Proposition \ref{prop:sym str} implies that $\T$ is symmetric monoidal. Consequently, $\T(R_X)=\T(e_!X)$ is an $\E $-R-coalgebra, since any space is an $\E $-coalgebra in $\Ss$. For the same reason, it remains to prove that $f$ is an $e_!X$-comodule in the category $\Ss_{/B\G}$. Equivalently, we prove that $Y=\mathrm{fib}(f)$ is an $e_!X$-comodule in the category $\mathrm{Fun}(B\G^{\mathrm{op}}, \Ss)$. Using Lemma \ref{lem:proj} repeatedly, we have
\[
Y \otimes \underbrace{e_!X \otimes e_!X \otimes \cdots \otimes e_!X}_{n} 
\simeq 
Y \times \underbrace{X \times X \times \cdots \times X}_{n},
\]
where the action of $\G$ on $Y \times X \times \cdots \times X$ is given by the canonical action on $Y=\mathrm{fib}(f)$ and the trivial action on $X\simeq \colim\limits_{B\G^{\mathrm{op}}} Y.$ The problem has now been reduced to verifying that $Y$ is a $\colim\limits_{B\G^{\mathrm{op}}} Y$-comodule in $\Ss$. This comodule structure is induced by the unit map $Y\to p^*p_!Y$, where $p$ is the unique map $B\G^{op}\to *$.
\end{proof}

Using the fact that $\colim\limits_{B\G^{\mathrm{op}}}$ is oplax monoidal when $\mathrm{Fun}(B\G^{\mathrm{op}}, \Ss)$ is equipped with the point-wise tensor product, \cite[Corollary 4.14]{Bea23} provides an alternative construction of an $R[X]$-comodule structure on $\T f$. We show that this structure agrees with ours. For clarity in the proof, we write the comodule structure in \cite[Corollary 4.14]{Bea23} as a $\T(R_X)$-comodule to distinguish it from the one in Theorem \ref{thm:comodule}.

\begin{prop}\label{prop:compare}
	The $R[X]$-comodule structure on $\T f$ in Theorem \ref{thm:comodule} coincides with the $\T(R_X)$-comodule structure on $\T f$ defined in \cite[Corollary 4.14]{Bea23}.
\end{prop}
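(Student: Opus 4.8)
The plan is to exhibit both the comodule structure of Theorem \ref{thm:comodule} and the one of \cite[Corollary 4.14]{Bea23} as instances of a single construction — pushing a ``tautological'' comodule living in a presheaf category forward along an oplax symmetric monoidal colimit functor — so that the entire comparison, of the coalgebra structures on $R[X]\simeq\T(R_X)$ as well as of the two coactions on $\T f$ with all their higher coherences, reduces to comparing the oplax monoidal structures on two colimit functors.

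First I would record how our structure appears in this form. By the end of the proof of Theorem \ref{thm:comodule}, together with Lemma \ref{lem:proj}, the $R[X]$-comodule structure on $\T f$ is built as follows: inside $\mathrm{Fun}(B\G^{\mathrm{op}},\Ss)$ one uses the projection formula of Lemma \ref{lem:proj} to pass from the Day convolution to the point-wise, hence \emph{Cartesian}, monoidal structure, in which $Y=\mathrm{fib}(f)$ acquires the comodule structure over $p^{*}p_{!}Y$ whose coaction is the counit $Y\to p^{*}p_{!}Y$ of the adjunction $p_{!}\dashv p^{*}$, $p\colon B\G^{\mathrm{op}}\to *$; one then applies the Thom spectrum functor $\T$, symmetric monoidal by Proposition \ref{prop:sym str}. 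The point is that this is canonical once $\T$ is fixed, and $\T$ itself is uniquely pinned down by Proposition \ref{prop:sym str}: since the point-wise monoidal structure on $\mathrm{Fun}(B\G^{\mathrm{op}},\Ss)$ is Cartesian, every object is a cocommutative coalgebra in a unique way (via its diagonal) and the category of comodules over it is the corresponding slice category, so both the coalgebra structure carried by the diagram of $R_{X}$ and the coaction carried by $Y$ are forced — the latter being the counit.

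Next I would unwind \cite[Corollary 4.14]{Bea23}, whose comodule structure on $\T f$ is produced from the oplax symmetric monoidal structure on $\colim_{B\G^{\mathrm{op}}}$ for the point-wise tensor, applied to the diagram of $f$ regarded as a comodule over the diagram of $R_{X}$. Invoking the same Cartesian rigidity, the coalgebra structure Beardsley uses on the $R_{X}$-diagram and the coaction he uses on the $f$-diagram are again the canonical ones, so, after matching his iterated tensor powers with ours via Lemma \ref{lem:proj}, the two structures on $\T f$ are obtained by applying two a priori different oplax symmetric monoidal colimit functors to one and the same comodule over one and the same coalgebra; in particular the two coalgebra structures on $R[X]\simeq\T(R_X)$ then agree as well.

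The hard part will therefore be the identification of those two oplax monoidal functors. An oplax symmetric monoidal structure on a functor is data, not a property, so ``$\T$ is symmetric monoidal'' and ``$\colim_{B\G^{\mathrm{op}}}$ is oplax monoidal'' do not by themselves make the comparison; I would argue by uniqueness. In both constructions the colimit functor is the left adjoint of the (strong symmetric monoidal) constant-diagram functor, and the oplax symmetric monoidal structure on the left adjoint of a strong symmetric monoidal functor is uniquely determined, as part of an adjunction compatible with the oplax/lax monoidal structures — the $\infty$-categorical form of doctrinal adjunction, cf.\ \cite{HA}. On Beardsley's side this is his structure by definition; on our side one traces the characterisation of $\T$ in Proposition \ref{prop:sym str} through the straightening equivalence — using the compatibility of straightening with left Kan extensions recorded in the proof of Proposition \ref{prop:sym str} and just before Lemma \ref{lem:proj} — to see that the oplax structure induced by $\T$ on the relevant colimit functor is exactly this mate. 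Once the two functors are identified, applying the common functor to the common comodule over the common coalgebra yields the same $R[X]$-comodule structure on $\T f$, proving Proposition \ref{prop:compare}.
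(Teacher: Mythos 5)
Your strategy --- realize both coactions as the image of a single ``tautological'' comodule under an oplax symmetric monoidal colimit functor, and then compare the two oplax structures by uniqueness of mates --- founders on a point your write-up passes over: the two constructions do not live in the same category, so there is no ``one and the same comodule over one and the same coalgebra'' to start from. The structure of Theorem \ref{thm:comodule} is built in $\mathrm{Fun}(B\G^{\mathrm{op}},\Ss)\simeq\Ss_{/B\G}$ equipped with the \emph{Day convolution}, where the comodule is $Y=\mathrm{fib}(f)$ and the coalgebra is $e_!X$; the structure of \cite[Corollary 4.14]{Bea23} is built in $\M^{X}=\mathrm{Fun}(X,\M)$ equipped with the \emph{point-wise relative tensor product} $\otimes_R$, where the comodule is the local system $f$ itself and the coaction is $p_!$ applied to $f\to p^*p_!f\simeq p^*p_!f\otimes_R R_X$ for $p\colon X\to *$. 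Identifying these two pieces of ``upstairs'' data is precisely the content of the proposition, and your Cartesian-rigidity argument cannot supply it on Beardsley's side, because $(\M^{X},\otimes_R)$ is not Cartesian. Likewise Lemma \ref{lem:proj} only converts Day-convolution tensors with objects of the form $e_!Y$ into products inside $\mathrm{Fun}(B\G^{\mathrm{op}},\Ss)$; it does not match these with point-wise tensor powers in $\M^{X}$.

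The uniqueness step inherits the same problem. An oplax symmetric monoidal structure on a functor is data, and what is unique is the mate of a given strong monoidal structure on the right adjoint, relative to \emph{fixed} monoidal structures on source and target. Beardsley's oplax structure on $p_!\colon\M^{X}\to\M$ is indeed the mate of $p^*$ for the point-wise tensor; but Proposition \ref{prop:sym str} makes $\T$ strong monoidal for the Day convolution, so it does not by itself induce any oplax structure for the point-wise tensor on $\M^{X}$, and your appeal to doctrinal adjunction has nothing to compare. The missing bridge is exactly what the paper supplies concretely: the equivalence $\colim_{X\times X}(f,R_X)\simeq\colim_X p^*(\colim_X f)$ of \cite[Example 2.5]{HY17}, together with a Yoneda/mapping-space computation showing that $\T$ applied to the diagonal $X\to X\times X$ corresponds under this equivalence to the unit $f\to p^*p_!f$. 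If you want to keep the abstract framing you would still have to prove this identification, at which point you have reproduced the paper's argument (which, for what it is worth, also only compares the coaction maps themselves and not the higher coherences you advertise).
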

\begin{proof}
	As shown in the proof of Theorem 4.15 of \cite{Bea23}, the coaction of $\T(R_X)$ on $\T f$ is given by applying the left Kan extension along $p\colon X\to *$ to the unit
	\[f\to p^*p_!f\simeq p^*p_!f\otimes_R R_X,\]
	where $\otimes_R$ is the point-wise tensor product. On the other hand, the coaction $\epsilon\colon \T f\to \T f\otimes_R R[X]$ in Theorem \ref{thm:comodule} is induced by applying the Thom spectrum functor to the following diagram
	\begin{equation}\label{eq:comod}
    \xymatrix{
    X \ar[r]^-{\Delta} \ar[dr]_-{f}
    &X\times X \ar[d]^-{(f,R_X)}
    \\
    &Pic(R),
    }
	\end{equation}
    Therefore, for any $R$-module $M$, there is a commutative diagram
    \[
    \xymatrix{
    \mathrm{Map}_R(\T f\otimes_R R[X], M) \ar[r]^-{\epsilon^*} \ar[d]_-{\simeq}
    &\mathrm{Map}_R(\T f, M) \ar[d]^-{\simeq} &\\
    \mathrm{Map}_{\M ^{X\times X}}((f,R_X), p^*M)  \ar[r]_-{\Delta^*}    &\mathrm{Map}_{\M ^X}(f, p^*M).
    }\]
   Furthermore, there is an equivalence 
   \[\colim\limits_{X\times X}\ (f,R_X)\xrightarrow{\simeq} \colim\limits_{X}[ p^*(\colim\limits_{X}f)],\]
     by \cite[Example 2.5]{HY17}. Applying $\mathrm{Map}_R(-,M)$ to this equivalence and composing with $\Delta^*$, we obtain a map
     \[\mathrm{Map}_{\M ^X}(p^*{\T f}, p^*M)\to \mathrm{Map}_{\M ^X}(f,p^*M).\]
     Unwinding definitions, we see that this map is induced by the unit $f\to p^*p_!f$.
\end{proof}

We end this section by proving that $\T f$ can be expressed as a relative tensor product for a map $f:X\to B\G$ with $X$ connected, which will be used in Section \ref{sec4}. Equation \ref{eq:presheaf} below is a special case of \cite[Lemma 4.47]{CCRY25} with an identical proof. We include this proof as part of the proof of Proposition \ref{prop:map}, as it is required for establishing our subsequent results. The following proposition will be used to compute the structural maps of $\mathrm{coTHH}^R(\T f;R[X])$.

\begin{prop}\label{prop:map}
	Let $G$ be a group-like $\mathbb{A}_{\infty}$-space and $M,N$ be objects of $\M ^{BG}$. The mapping space $\mathrm{Map}_{\M ^{BG}}(M,N)$ is equivalent to the totalization of the following cosimplicial space
	\[\mathrm{Map}_R(M,N)\rightrightarrows\mathrm{Map}_R(M\otimes G,N)\threerightarrows \mathrm{Map}_R(M\otimes (G\times G),N)\fourrightarrows\cdots,\]
	where the cosimplicial structure is induced by actions of $G$ on $M$ and $N$ (see the proof).
\end{prop}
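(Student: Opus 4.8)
The plan is to recognize $\M^{BG}$ as a module category and then compute the mapping space with a bar resolution. Since $BG$ is connected, restriction along the basepoint $e\colon *\to BG$ is a conservative functor $e^*\colon \M^{BG}\to \M$, and, being evaluation at a point, it preserves all limits and colimits; by the Barr--Beck--Lurie theorem it is therefore monadic. Because $G$ is group-like, $\Omega_e BG\simeq G$, so the left Kan extension satisfies $e^*e_!(-)\simeq\colim_G(-)\simeq R[G]\otimes_R(-)$, where $R[G]=R\otimes\Si G$ carries its induced $\mathbb{A}_{\infty}$-$R$-algebra structure; the resulting monad is $R[G]\otimes_R(-)$, whose algebras are left $R[G]$-modules. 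Thus $\M^{BG}\simeq\mathrm{LMod}_{R[G]}(\M)$, and under this equivalence $M$ and $N$ become $R[G]$-modules whose underlying $R$-modules are $M_*=e^*M$ and $N_*=e^*N$, the module structures encoding the $G$-actions. I also use that $R[-]$ is symmetric monoidal, so $R[G]^{\otimes_R n}\simeq R[G^n]$ and $R[G^n]\otimes_R M_*\simeq M\otimes G^n$.

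Monadicity of $e^*$ gives the bar resolution $M\simeq\bigl|[n]\mapsto \mathrm{free}(R[G^n]\otimes_R M_*)\bigr|$ of $M$ by free $R[G]$-modules, where $\mathrm{free}=R[G]\otimes_R(-)$. Mapping into $N$, and using that $\mathrm{Map}_{\M^{BG}}(-,N)$ carries this geometric realization to a totalization, yields
\[
\mathrm{Map}_{\M^{BG}}(M,N)\simeq\Tot\bigl([n]\mapsto \mathrm{Map}_{\M^{BG}}(\mathrm{free}(R[G^n]\otimes_R M_*),\,N)\bigr),
\]
and the free--forgetful adjunction identifies the $n$-th term with $\mathrm{Map}_R(R[G^n]\otimes_R M_*,N)\simeq\mathrm{Map}_R(M\otimes G^n,N)$. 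This is the desired cosimplicial space, and its coface and codegeneracy maps are precisely those of the bar resolution, dualized by $\mathrm{Map}_{\M^{BG}}(-,N)$.

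It then remains to unwind these structure maps. The codegeneracies come from the degeneracies of the bar resolution, which insert the unit $R\to R[G]$, i.e.\ the basepoint $*\to G$, so they are induced by the unit of $G$. The inner cofaces $\delta^1,\dots,\delta^{n-1}$ come from the faces given by the multiplication $R[G]\otimes_R R[G]\to R[G]$, hence by the multiplication of $G$. The two outer cofaces $\delta^0$ and $\delta^n$ come from the remaining faces, which involve the $R[G]$-module structures on $M$ and on $N$ respectively; when one of these actions is trivial the corresponding coface reduces to the one induced by the augmentation $R[G]\to R$. The main obstacle is exactly this last bookkeeping: matching the structure maps of the standard bar construction with the description in the statement requires tracking carefully which of the two module structures surfaces in $\delta^0$ as opposed to $\delta^n$, and in which tensor slots the multiplications of $G$ act --- conceptually routine, but fiddly, and it is what the statement defers to with its parenthetical remark. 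Specializing the description to concrete $M$ and $N$ --- for instance to the setting of Theorem~\ref{thm:geo}, in which one of the two modules carries the trivial $G$-action --- recovers the explicit list of face and degeneracy maps recorded there.
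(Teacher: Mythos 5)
Your argument is correct and lands on the same intermediate milestone as the paper --- the equivalence $\M^{BG}\simeq \mathrm{Mod}_{R[G]}(\M)$, followed by the bar resolution of $M$ and the free--forgetful adjunction --- but it reaches that milestone by a genuinely different route. You establish the equivalence by monadicity of $e^*\colon\M^{BG}\to\M$ (conservativity since $BG$ is connected, preservation of all colimits, Barr--Beck--Lurie, and the identification $e^*e_!\simeq\colim_G(-)\simeq R[G]\otimes_R(-)$). The paper instead cites \cite[Corollary 4.12]{CCRY25} to identify $\M^{BG}$ with $\M\otimes\Ss[BG]$ as an $\M$-module category, and then identifies the latter with $\mathrm{RMod}_{R[G]}(\M)$ by computing the right adjoints of two composable left adjoints and invoking uniqueness of adjoints. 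Your route is more elementary and self-contained; the one step you leave implicit is that $e^*e_!$ agrees with $R[G]\otimes_R(-)$ as a monad, not merely as an endofunctor --- this is where the $\mathbb{A}_{\infty}$-structure of $G$ actually enters, via the $\M$-linearity of $e^*$ and $e_!$ and the identification of colimit-preserving $\M$-linear endofunctors of $\M$ with $R$-bimodules. The paper's heavier setup buys something you would have to re-derive: because the equivalence is produced from an explicit adjunction of functors out of $\mathrm{Alg}^{gp}(\Ss)$, it is manifestly natural in $G$ and in the base ring, which is exactly what the proofs of Theorem \ref{thm:Thf} and Remark \ref{rem:ind} exploit. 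Your unwinding of the coface and codegeneracy maps, including which outer coface sees the action on $M$ versus on $N$, matches the paper's.
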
 
\begin{proof}
	Taking $f=e\colon *\to BG$ in \cite[Corollary 4.12]{CCRY25}, we see that there is an equivalence of left $\M $-categories
	\[\M ^{BG}\simeq \M[BG]= \M\otimes \Ss[BG]\]
	under $\M $. Furthermore, $\M[BG]$ is the image of $G$ under the composition of functors $\mathrm{Alg}^{gp}(\Ss)\xrightarrow{B}\Ss_{*/}\xrightarrow{\M[-]}\mathrm{Mod}({\M })_{\M /}$, where $\mathrm{Mod}({\M })$ is the category of  $\M$-modules in the category $\mathrm{Pr}^L$ (note that since $\M$ is symmetric monoidal, there is an equivalence $\mathrm{LMod}(\M)\simeq \mathrm{Mod}(\M)$). As discussed at the beginning of Section \ref{sec2}, both functors admit right adjoints, and therefore, their composition has a right adjoint
	\[\mathrm{Mod}({\M })_{\M /}\xrightarrow{(-)^\sim}\Ss_{*/}\xrightarrow{\Omega} \mathrm{Alg}^{gp}(\Ss).\]
	This functor takes $\mathcal{C}\in \mathrm{Mod}({\M })_{\M /}$ to the subspace of $\mathrm{Map}_{\mathcal{C}}(1_{\mathcal{C}},1_{\mathcal{C}})$ consisting of invertible maps. Hence, we can also write this right adjoint as the composition
	\[\mathrm{Mod}({\M })_{\M /}\xrightarrow{\mathrm{End}(1_{-})}\mathrm{Alg}(\mathrm{Mod_R})\xrightarrow{(-)^{\times}}\mathrm{Alg}^{gp}(\Ss).\]
	Again, the second functor is right adjoint to $R[-]$, while by \cite[Proposition 4.8.11]{HA} and its remark the first functor is right adjoint to $\mathrm{RMod}(-)$. By the uniqueness of left adjoints, we obtain an equivalence
	\begin{equation}\label{eq:presheaf}
			F\colon\mathrm{Mod_R}^{BG} \simeq \M[BG]\xrightarrow{\simeq} \mathrm{RMod}_{R[G]}(\M ).
	\end{equation}
	Next, we need to explain the meaning of the $G$-action on $M\in\M^{BG}$. In other words, we must prove that $UF(M)=M$, where $U$ is the forgetful functor. Indeed, we have the following commutative diagrams
	\begin{center}
\begin{minipage}{0.53\textwidth}
\centering
\[
\xymatrix{
\M \ar[r]^-{\simeq} \ar[d]_-{e_!}
&\mathrm{RMod}_{R}(\M ) \ar[d]^-{-\otimes_{R} R[G]} &\\
\M[BG]  \ar[r]_-{\simeq}
&\mathrm{RMod}_{R[G]}(\M ),
}\]
\end{minipage}
\hfill
\begin{minipage}{0.46\textwidth}
\centering
\[
\xymatrix{
\M \ar[r]^-{\simeq} 
&\mathrm{RMod}_{R}(\M ) &\\
\M[BG]  \ar[r]_-{\simeq} \ar[u]^-{e^*}
&\mathrm{RMod}_{R[G]}(\M ) \ar[u]_-{U}.
}\]
\end{minipage}
\end{center}
The left diagram commutes by the naturality of the construction above, and the right one is obtained from the left diagram by passing to right adjoints. Note that equivalences $\M \to \mathrm{RMod}_R(\M)$ in both diagrams are identity maps. Furthermore, by \cite[Corollary 4.12]{CCRY25}, the diagram
\[
\xymatrix{
\M[BG] \ar[r]^-{\simeq} \ar[d]_-{e^*}
&\M^{BG} \ar[d]^-{e^*} &\\
\M  \ar[r]_-{\simeq}
&\M,
}\]
also commutes. Putting these diagrams together yields the desired result.
	
Now as a right $R[G]$-module in the category $\M $, $M$ is equivalent to the geometric realization of the bar construction $\mathrm{Bar}_{\bullet}^R(M,R[G],R[G])$ in $\M $. Therefore, the mapping space $\mathrm{Map}_{\mathrm{RMod}_{R[G]}(\M )}(M,N)$ is equivalent to the totalization of the cosimplicial space
	\[\mathrm{Map}_{\mathrm{RMod}_{R[G]}(\M )}(\mathrm{Bar}_{\bullet}^R(M,R[G],R[G]),N).\]
	By the base change, this is equivalent to
	\[\mathrm{Tot}(\mathrm{Map}_R(M,N)\rightrightarrows\mathrm{Map}_R(M\otimes G,N)\threerightarrows \mathrm{Map}_R(M\otimes (G\times G),N)\fourrightarrows\cdots),\]
	which yields the desired conclusion.
\end{proof}

Using the identification in Proposition \ref{prop:map}, we can express $\T f$ in the form of a bar construction.

\begin{thm}\label{thm:Thf}
	Let $X$ be a connected space and $f$ be a map from $X$ to $B\G$, then there is an equivalence of $R$-modules
	\[\T f\simeq R\otimes_{R[G]}R,\]
	here $G$ is the loop space of $X$. Furthermore, the $R[G]$-module structure on the left copy of $R$ arises from the action of $G$ on $R$, while that on the right copy comes from the map of algebras induced by the map $p\colon G\to *$.
\end{thm}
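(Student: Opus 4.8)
The plan is to recognise $\T f$ as the homotopy $G$-orbits of a suitable $R[G]$-module and to identify those orbits with a relative tensor product over $R[G]$. Since $X$ is connected there is an equivalence $X\simeq BG$ with $G=\Omega X$ a group-like $\mathbb{A}_\infty$-space, so the identifications of Proposition \ref{prop:map} apply. Transporting the defining formula $\T f=\colim\bigl(X\xrightarrow{f}Pic(R)\xrightarrow{i}\M\bigr)$ along $X\simeq BG$ presents $\T f$ as $\colim_{BG}(i\circ f)$, where $i\circ f$ is now regarded as an object of $\M^{BG}=\mathrm{Fun}(BG,\M)$.

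First I would read off the object $i\circ f\in\M^{BG}$. Under the equivalence \eqref{eq:presheaf}, $\M^{BG}\simeq\mathrm{RMod}_{R[G]}(\M)$, and the underlying $R$-module of $i\circ f$ is its value at the basepoint, namely $(i\circ f)(*)=i\bigl(f(*)\bigr)=i(R)=R$, since $f$ is based with $f(*)$ the unit $R$ of $Pic(R)$. The resulting right $R[G]$-module structure on $R$ is what we call the action of $G$ on $R$ determined by $f$.

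Next I would identify the functor $\colim_{BG}\colon\M^{BG}\to\M$. Writing $e\colon *\to BG$ for the basepoint and $p\colon BG\to *$, we have $\colim_{BG}=p_!$, and under \eqref{eq:presheaf} the left Kan extension $e_!$ is the free right $R[G]$-module functor $(-)\otimes_R R[G]$, exactly as in the proof of Proposition \ref{prop:map}. Since $p\circ e=\mathrm{id}_{*}$ we get $\colim_{BG}\circ e_!\simeq\mathrm{id}_\M$; as $\colim_{BG}=p_!$ preserves colimits and every $M\in\mathrm{RMod}_{R[G]}(\M)$ is, as in the proof of Proposition \ref{prop:map}, the geometric realization $\lvert\mathrm{Bar}^R_\bullet(M,R[G],R[G])\rvert$ of a simplicial object whose terms are free right $R[G]$-modules, it follows (this is the usual computation of homotopy orbits) that $\colim_{BG}M\simeq\lvert\mathrm{Bar}^R_\bullet(M,R[G],R)\rvert\simeq M\otimes_{R[G]}R$, where $R$ carries the left $R[G]$-module structure given by the augmentation $R[G]\to R$ induced by $p\colon G\to *$.

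Combining the two steps with $M=i\circ f$ then yields
\[\T f\simeq\colim_{BG}(i\circ f)\simeq(i\circ f)\otimes_{R[G]}R\simeq R\otimes_{R[G]}R,\]
where the left copy of $R$ carries the action of $G$ on $R$ determined by $f$ and the right copy the trivial $R[G]$-module structure induced by $p\colon G\to *$, as claimed. The main obstacle I anticipate is bookkeeping rather than conceptual: one must check that the equivalence $X\simeq BG$ really carries the local system $f$ to the $R[G]$-module structure used in Theorem \ref{thm:geo}, and that the left and right roles in $R\otimes_{R[G]}R$ come out as stated, which hinges on Proposition \ref{prop:map} identifying $\M^{BG}$ with \emph{right} $R[G]$-modules. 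The identification $\colim_{BG}(-)\simeq(-)\otimes_{R[G]}R$ is the one genuinely non-formal ingredient; everything else is unwinding constructions from Section \ref{sec2}.
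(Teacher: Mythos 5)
Your proof is correct and follows essentially the same route as the paper: both arguments rest on the equivalence $\M^{BG}\simeq\mathrm{RMod}_{R[G]}(\M)$ from Proposition \ref{prop:map}, the identification of $f$ with $R$ carrying the $G$-action, and the recognition that the colimit over $BG$ is computed by $-\otimes_{R[G]}R$. The only (immaterial) difference is that the paper works on the adjoint side, computing $\mathrm{Map}_R(\T f,M)\simeq\mathrm{Map}_{\mathrm{RMod}_{R[G]}(\M)}(R,UM)$ and invoking the Yoneda lemma, whereas you identify $p_!$ with $-\otimes_{R[G]}R$ directly via the bar resolution.
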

\begin{proof}
	Since $\T f$ is defined as a colimit, for any $R$-module $M$ there is an equivalence of spaces
	\[\mathrm{Map}_R(\T f,M)\simeq \mathrm{Map}_{\M^X}(f,p^*M).\]
	As in the proof of the preceding proposition, using the naturality and \cite[Corollary 4.12]{CCRY25}, we have the following commutative diagram
\[
\xymatrix{
\M \ar[r]^-{\simeq} \ar[d]_-{p^*}
&\M \ar[r]^-{\simeq} \ar[d]_-{p^*}
&\mathrm{RMod}_R(\M) \ar[d]^-{U} &\\
\M^X \ar[r]_-{\simeq}
&\M[X]  \ar[r]_-{\simeq}
&\mathrm{RMod}_{R[\Omega X]}(\M).
}\]
Therefore, under the equivalence $\M^X\simeq \mathrm{RMod}_{R[\Omega X]}(\M)$, the image of $p^*M\in\M^X$ is $U(M)$ and $U$ is right adjoint to the functor $-\otimes_{R[\Omega X]}R$. Hence, by the proof of Proposition \ref{prop:map}, we obtain that
	\[\mathrm{Map}_{\M^X}(f,p^*M)\simeq \mathrm{Map}_{\mathrm{RMod}_{R[G]}(\M)}(R,UM)\simeq \mathrm{Map}_R(R\otimes_{R[G]}R,M).\]
	Now the result follows from the Yoneda Lemma.
\end{proof}

\begin{rem}
	Let $\mathcal{C}$ be a presentable symmetric monoidal category, $A_i$ be algebras in $\mathcal{C}$, $M_i$ be right $A_i$-modules and $N_i$ be left $A_i$-modules for $i=1,2$, we have
	\[|\mathrm{Bar}_{\bullet}^{\mathcal{C}}(M_1,A_1,N_1)|\otimes|\mathrm{Bar}_{\bullet}^{\mathcal{C}}(M_2,A_2,N_2)|\simeq|\mathrm{Bar}_{\bullet}^{\mathcal{C}}(M_1\otimes M_2,A_1\otimes A_2,N_1\otimes N_2)|,\]
	since $\Delta^{op}$ is sifted. In particular, given two pointed maps $f\colon X\to Pic(R)$ and $g\colon Y\to Pic(R)$  with $X,Y$ connected, we obtain
	\begin{equation*}
		\begin{split}
			\T f\otimes_R\T g & \simeq (R\otimes_{R[\Omega X]}R)\otimes_R (R\otimes_{R[\Omega Y]}R)\\
			& \simeq (R\otimes_R R)\otimes_{(R[\Omega X]\otimes_R R[\Omega Y])}(R\otimes_R R)\\
			& \simeq R\otimes_{R[\Omega (X\times Y)]}R\\
			& \simeq \T (f,g).
		\end{split}
	\end{equation*}
	Thus, we have established an alternative proof of the symmetric monoidality of the Thom spectrum functor.
\end{rem}

\begin{rem}\label{rem:ind}
	Let $A$ be an $\E$-$R$-algebra, then the functor $\mathrm{Ind}_R^A:\M\to \mathrm{Mod}_A$ is a map of commutative algebras in the category $\mathrm{Pr}^L$ and induces an adjunction
	\begin{equation}\label{fun:ad}
		\mathrm{Ind}^{\mathrm{Mod}_A}_{\M}: \mathrm{Mod}(\M)_{\M/} \rightleftarrows \mathrm{Mod}(\mathrm{Mod}_A)_{\mathrm{Mod}_A/}:U
	\end{equation}
	 Composing this functor with the functor $\mathrm{Alg}^{gp}(\Ss)\xrightarrow{B}\Ss_{*/}\xrightarrow{\M[-]}\mathrm{Mod}({\M })_{\M /}$ in the proof of Proposition \ref{prop:map}, the same argument shows that there is a commutative diagram
	 \[
       \xymatrix{
       \M[BG] \ar[r]^-{\simeq} \ar[d]_-{\mathrm{Ind}_R^A}
       &\mathrm{RMod}_{R[G]}(\M) \ar[d]^-{\mathrm{Ind}_R^A} &\\
       \mathrm{Mod}_A[BG]  \ar[r]_-{\simeq}
       &\mathrm{RMod}_{A[G]}(\M)
       }\]
    by considering the unit of the adjunction \ref{fun:ad}. In particular, we see from the proof of Theorem \ref{thm:Thf} that 
    \[\mathrm{Ind}_R^A(\T f)\simeq A\otimes_{A[G]}A.\]
    In other words, the equivalence of Theorem \ref{thm:Thf} is natural with respect to the base change. Alternatively, this equivalence can also be proven by employing a series of adjoint functors as in the proof of Theorem \ref{thm:Thf}.
\end{rem}

\section{Topological coHochschild homology of Thom spectra}\label{sec3}
Let $R$ be an $\E $-ring and $f$ be a map from $X$ to $Pic(R)$ with $X$ simply connected. In this section, we study the topological coHochschild homology of $R[X]$ with coefficient $\T f$. Note that there is an equivalence $X\simeq B\Omega X$, since $X$ is simply connected. Hence, we may always assume that the $0$-skeleton of $X$ consists of a single point. We begin by recalling the definition of topological coHochschild homology. Recall that a coalgebra $C$ and a $C$-bicomodule in a symmetric monoidal category $\mathcal{C}$ can be interpreted as an algebra and a bimodule in the opposite symmetric monoidal category $\mathcal{C}^{op}$ (see \cite[Remark 2.4.2.7]{HA} or \cite[Definition 2.1]{Pe22}). Therefore, we can regard the pair $(M,C)$ as an object of $\mathrm{CycBMod}_1(\mathcal{C}^{op})$ (see \cite[Definition 6.15]{KMN23}). 

\begin{Def}\cite[Definition 6.25]{KMN23}
	Let $\mathcal{C}$ be a symmetric monoidal category, there is a functor 
	\[\mathrm{coTHH}^\mathcal{C}_*: \mathrm{CycBMod}_1(\mathcal{C}^{op})^{op} \hookrightarrow \mathrm{Fun}^{\otimes}((\mathrm{CycBMod}_1^{\otimes})_{act}, \mathcal{C}^{op})^{op}\xrightarrow{\mathrm{Cut}_{\Lambda}(-\times [1])^*}\mathrm{Fun}(\Delta,\mathcal{C}).\]
	Topological coHochschild homology of $C$ with coefficient $M$ is defined to be 
	\[\mathrm{coTHH}^\mathcal{C}(M;C):=\mathrm{Tot}(\mathrm{coTHH}^\mathcal{C}_*(M;C)).\]
	When $M=C$, $\mathrm{coTHH}^\mathcal{C}(M;C)$ is precisely the topological coHochschild homology of $C$ which is denoted by $\mathrm{coTHH}^{\mathcal{C}}(C)$. 
\end{Def}    
    Informally, $\mathrm{coTHH}^\mathcal{C}(M;C)$ is the totalization of the following cosimplicial $R$-module
\[
M \;
\mathrel{\substack{\displaystyle\longrightarrow\\[-0.6ex]\displaystyle\longleftarrow\\[-0.6ex]\displaystyle\longrightarrow}}
\;
M \otimes_R C \;
\mathrel{\substack{\displaystyle\longrightarrow\\[-0.6ex]\displaystyle\longleftarrow\\[-0.6ex]\displaystyle\longrightarrow\\[-0.6ex]\displaystyle\longleftarrow\\[-0.6ex]\displaystyle\longrightarrow}}
\;
M\otimes_R C\otimes_R C \; \mathrel{\substack{\displaystyle\longrightarrow\\[-0.6ex]\displaystyle\longleftarrow\\[-0.6ex]\displaystyle\longrightarrow\\[-0.6ex]\displaystyle\longleftarrow\\[-0.6ex]\displaystyle\longrightarrow\\[-0.6ex]\displaystyle\longleftarrow\\[-0.6ex]\displaystyle\longrightarrow}} \; \cdots,
\]
whose coface maps are induced by the right and left coactions of $C$ on $M$, the comultiplication of $C$ and twist maps. For an $\E$-ring $R$, an $R$-coalgebra $C$ and a $C$-bicomodule $M$, we use the shorthand notation $\mathrm{coTHH}^R(M;C)$ to denote $\mathrm{coTHH}^{\M}(M;C).$

\begin{exmp}\label{exmp:over BG}
    Note that any space $X$ is a cocommutative coalgebra in $\Ss$. Unwinding the definitions, there is an equivalence of cosimplicial space
    \[\mathrm{coTHH}^{\Ss}_*(X)\simeq \mathrm{Map}(S^1_{\bullet}, X),\]
    where we consider $S^1_{\bullet}=\Delta^1/\partial\Delta^1$ as a simplicial space which is constant in each degree.
    Consequently, there is an equivalence
    \[\mathrm{coTHH}^{\Ss}(X)\simeq\mathrm{Tot}\mathrm{Map}(S^1_{\bullet}, X)\simeq \mathrm{Map}(|S^1_{\bullet}|,X)=LX,\]
    where $LX$ is the free loop space of $X$. Furthermore, given a map $f:X\to B\G$, we see that $f$ is a comodule over $R_X$ by the proof of Theorem \ref{thm:comodule}. Since the forgetful functor $\Ss_{/B\G} \to S$ preserves contractible limits, the underlying space of $\mathrm{coTHH}^{\Ss_{/B\G}}(f,R_X)$ is $\mathrm{coTHH}^{\Ss}(X)\simeq LX$. Consequently, $\mathrm{coTHH}^{\Ss_{/B\G}}(f,R_X)$ is the map
	\[\mathrm{coTHH}^{\Ss_{/B\G}}(f;R_X):LX\xrightarrow{ev_0}X\xrightarrow{f} B\G.\]
\end{exmp}

In \cite{Mal17}, it is shown that 
\[\mathrm{coTHH}^{\mathbb{S}}(\mathbb{S}[X])=\mathrm{coTHH}^{\mathbb{S}}(\mathbb{S}[X];\mathbb{S}[X])\simeq \mathbb{S}[LX]\]
for a simply connected space $X$. The next example shows that $\mathrm{coTHH}^{\mathbb{S}}(\mathbb{S};\mathbb{S}[X])$ is equivalent to the suspension spectrum of $\Omega X$ for a pointed simply connected space $X$.
\begin{exmp}\label{exmp:Omega}
	Let $X$ be a pointed simply connected space with the base point $e$, then $e:*\to X$ is a map of $\E $-coalgebras in the category $\Ss$. Therefore, $e: \mathbb{S} \to \mathbb{S}[X]$ is a map of $\E $-coalgebras in the category $\mathrm{Sp}$, since $\mathbb{S}[-]$ is symmetric monoidal. In particular, $\mathbb{S}$ is a $\mathbb{S}[X]$-comodule. Setting $M=D=\mathbb{S}$ and $C=\mathbb{S}[X]$ in \cite[Proposition 4.6]{Z25}, we have
	\[\mathrm{coTHH}^{\mathbb{S}}(\mathbb{S}; \mathbb{S}[X])\simeq \mathbb{S}\square_{\mathbb{S}[X]}\mathbb{S}\]
	In other words, $\mathrm{coTHH}^{\mathbb{S}}(\mathbb{S};\mathbb{S}[X])$ is equivalent to the cobar construction of $\mathbb{S}[X]$ which is equivalent to $\mathbb{S}[\Omega X]$ by \cite[Proposition 3.15]{HS21}. More generally, it can be deduced from \cite[Lemma 3.2]{Z25} or Proposition \ref{prop:base change} below that there is an equivalence $\mathrm{coTHH}^R(R; R[X])\simeq R[\Omega X]$ for any connective $\E$-ring $R$.
\end{exmp}

Let $A$ be an $\E$-ring. Just as we can use the B\"okstedt spectral sequence to compute the $A$-homology of $\mathrm{THH}(R)$, in \cite{BGT+18}, the authors introduced the coB\"okstedt spectral sequence to compute the $A$-homology of $\mathrm{coTHH}^R(C)$. Similarly, this spectral sequence can also be applied to compute $\mathrm{coTHH}^R(M;C)$. Consequently, we begin by analyzing the $A$-homology of $\mathrm{coTHH}^R(\T f;R[X])$ for a suitable $\E$-ring $A$. Recall that an $\mathbb{E}_1$-$R$-coalgebra $C$ is coaugmented if there is a map of $\mathbb{E}_1$-$R$-coalgebras $R\to C$. Its coaugmentation coideal is the fiber of the counit.

\begin{prop}\label{prop:base change}
   Let $R$ be a connective $\E$-ring, $A$ be a connective $\E$-R-algebra, $C$ be a coaugmented $\mathbb{E}_1$-$R$-coalgebra whose homotopy groups of coaugmentation coideal $\bar{C}$ vanish in degree less than $2$ and $M$ be a connective $C$-bimodule, then there is an equivalence of $A$-modules
       \[\mathrm{Ind}_R^A(\mathrm{coTHH}^R(M;C))\simeq \mathrm{coTHH}^A(\mathrm{Ind}_R^A(M),\mathrm{Ind}_R^A(C)).\]
\end{prop}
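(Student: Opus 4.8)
The plan is to reduce the statement to the single fact that $\mathrm{Ind}_R^A$, being exact and connectivity-preserving, commutes with the $\Tot$-tower of the coHochschild cosimplicial object, and to observe that the hypotheses on $\bar{C}$ and $M$ are exactly what makes this exchange of limits legitimate.

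First I would use that the base change functor $\mathrm{Ind}_R^A=-\otimes_R A\colon\M\to\mathrm{Mod}_A$ is symmetric monoidal. Passing to opposite categories, it therefore carries the cyclic bimodule $(M,C)\in\mathrm{CycBMod}_1(\M^{op})$ to $(\mathrm{Ind}_R^A M,\mathrm{Ind}_R^A C)$, where $\mathrm{Ind}_R^A C$ is again a coaugmented $\mathbb{E}_1$-$A$-coalgebra with coaugmentation coideal $\mathrm{Ind}_R^A\bar{C}$ (the fiber of the counit commutes with the exact functor $\mathrm{Ind}_R^A$). Since the cosimplicial object $\mathrm{coTHH}_*$ of \cite[Definition 6.25]{KMN23} is assembled from the tensor product and the (co)structure maps alone — no (co)limits intervene before $\Tot$ — it is natural in symmetric monoidal functors of $\mathcal C$, so there is an equivalence of cosimplicial $A$-modules
\[\mathrm{Ind}_R^A\bigl(\mathrm{coTHH}^R_*(M;C)\bigr)\;\simeq\;\mathrm{coTHH}^A_*\bigl(\mathrm{Ind}_R^A M;\mathrm{Ind}_R^A C\bigr),\]
whose $n$-th term is $\mathrm{Ind}_R^A(M\otimes_R C^{\otimes_R n})$. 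Because $A$ is a connective module over the connective ring $R$, the functor $\mathrm{Ind}_R^A$ preserves $k$-connective objects for every $k$; in particular $\mathrm{Ind}_R^A M$ is connective and $\mathrm{Ind}_R^A\bar{C}$ still vanishes below degree $2$, so the right-hand side satisfies the hypotheses of the proposition.

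It then remains to show $\mathrm{Ind}_R^A\Tot(Z^\bullet)\simeq\Tot(\mathrm{Ind}_R^A Z^\bullet)$ for $Z^\bullet=\mathrm{coTHH}^R_*(M;C)$. One always has $\Tot(Z^\bullet)=\lim_n\Tot^n(Z^\bullet)$, and each $\Tot^n$ is a finite limit (via the Bousfield--Kan description as an iterated pullback), hence preserved by the exact functor $\mathrm{Ind}_R^A$ — a left adjoint between stable categories; thus $\mathrm{Ind}_R^A\Tot^n(Z^\bullet)\simeq\Tot^n(\mathrm{Ind}_R^A Z^\bullet)$. So everything reduces to commuting $\mathrm{Ind}_R^A$ past the sequential limit, and here the connectivity enters. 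Since $C$ is coaugmented, $C\simeq R\oplus\bar{C}$ as an $R$-module, and the $n$-th conormalized term $N^nZ^\bullet$ is equivalent to $M\otimes_R\bar{C}^{\otimes_R n}$, which is $2n$-connective because $R$, $M$ are connective and $\bar{C}$ is $2$-connective; as $\mathrm{fib}(\Tot^nZ^\bullet\to\Tot^{n-1}Z^\bullet)\simeq\Omega^nN^nZ^\bullet$, this fiber is $n$-connective. Hence the $\Tot$-tower converges, and applying the connectivity-preserving $\mathrm{Ind}_R^A$ produces a tower $\{\Tot^n(\mathrm{Ind}_R^A Z^\bullet)\}$ with the same property, so the canonical map $\mathrm{Ind}_R^A(\lim_n\Tot^nZ^\bullet)\to\lim_n\Tot^n(\mathrm{Ind}_R^A Z^\bullet)$ is an equivalence — it is an isomorphism on $\pi_j$ for each $j$, by comparing both sides with a fixed $\Tot^n$ for $n\gg j$. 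Chaining the identifications of the previous paragraph yields the asserted equivalence of $A$-modules.

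I expect the main obstacle to be precisely this last exchange of $\mathrm{Ind}_R^A$ with $\lim_n\Tot^n$: it is what forces the hypothesis that $\bar{C}$ vanish below degree $2$ rather than merely below degree $1$, since the resulting slope-one growth of the connectivity of $\mathrm{fib}(\Tot^n\to\Tot^{n-1})$ is what guarantees convergence of both towers. A secondary point deserving care is the identification $N^nZ^\bullet\simeq M\otimes_R\bar{C}^{\otimes_R n}$ in the presence of the twist maps in the coTHH cosimplicial object; these maps are equivalences, so they do not affect the connectivity count, and alternatively one may invoke the convergence of the coB\"okstedt spectral sequence of \cite{BGT+18}, whose input is built from $\bar{C}$ and $M$ and which converges under exactly these hypotheses.
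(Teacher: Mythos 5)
Your proof is correct and follows essentially the same route as the paper's: base change is symmetric monoidal and preserves connectivity, $\mathrm{Tot}^n$ is a finite limit, and the fibers of the $\mathrm{Tot}$-tower are $\Omega^n$ of $2n$-connective conormalized terms, so the tower converges and commutes with $\mathrm{Ind}_R^A$. The only cosmetic difference is that the paper establishes the connectivity of $\mathrm{fib}(\mathrm{Tot}^n\to\mathrm{Tot}^{n-1})$ via the cobar complex of the cotensor product $M\square_{C\otimes_R C^{op}}C$ and an inductive fiber-sequence argument from \cite{Z25}, whereas you read it off directly from the conormalization $M\otimes_R\bar{C}^{\otimes_R n}$ — a formula the paper itself records immediately afterward in Equation \ref{eq:fib}.
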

\begin{proof}
    By \cite[Theorem 1.1]{Z25}, there is an equivalence 
    \[\mathrm{coTHH}^R(M;C)\simeq M\square_{C\otimes_R C^{op}} C.\]
    Furthermore, as in the proof of \cite[Lemma 3.2]{Z25}, there is a fiber sequence
	\begin{equation*}
		\mathrm{fib}(f_n)\to \mathrm{fib}(f_{n-1})\xrightarrow{\rho} \mathrm{fib}(f_{n-1})\otimes_R (C\otimes_R C^{op}),
	\end{equation*}
	here $\mathrm{fib}(f_i)$ is the fiber of the natural map $M\to \mathrm{Tot}^i(\Omega^*_R(M,C\otimes_R C^{op},C))$ for $i\geq 0$, and the fiber of $i_n^*:\mathrm{Tot}^{n+1}(\Omega^*_R(M,C\otimes_R C^{op},C))\to \mathrm{Tot}^n(\Omega^*_R(M,C\otimes_R C^{op},C))$ is equivalent to $\mathrm{fib}(f_n)\otimes_R C$. Note that $\rho$ admits a left inverse $id\otimes_R\epsilon$, and $(id\otimes_R\epsilon)_*$ is an isomorphism when restricted to $\pi_*({\mathrm{fib}(f_{n-1})})$, where $\epsilon$ is the counit of $C\otimes_R C^{op}$. Therefore, $\rho_k$ and $\rho_{k+1}$ are isomorphisms because $\overline{C\otimes_R C^{op}}$ is $2$-connective, where $k$ is the connectivity of $\mathrm{fib}(f_{n-1})$. Hence $\mathrm{fib}(f_n)\otimes_R C$ is $n$-connective by induction. Now since $R$ and $A$ are connective, the functor $\mathrm{Ind}_R^A$ preserves connectivity. Therefore, the fiber $F^n$ of the map
	\[\mathrm{Ind}_R^A(\mathrm{coTHH}^R(M;C))\to \mathrm{Ind}_R^A[\mathrm{Tot}^n\Omega_R^*(M,C\otimes_R C^{op},C))]\simeq \mathrm{Tot}^n[\mathrm{Ind}_R^A\Omega_R^*(M,C\otimes_R C^{op},C))]\]
	is $(n-1)$-connective by the Milnor sequence. Here we use the fact that $\mathrm{Tot}^i$ is equivalent to a finite limit, and hence commutes with $\mathrm{Ind}_R^A$. It follows that $\lim\limits_nF^n\simeq 0$ and
	\[\mathrm{Ind}_R^A(\mathrm{coTHH}^R(M;C))\simeq \mathrm{Tot}[\mathrm{Ind}_R^A\Omega_R^*(M,C\otimes_R C^{op},C))]\simeq \mathrm{coTHH}^A(\mathrm{Ind}_R^A(M),\mathrm{Ind}_R^A(C)),\]
	since the functor $\mathrm{Ind}_R^A$ is symmetric monoidal.
\end{proof}

\begin{rem}
	For a general cobar complex $\Omega^*_R(M,C,N)$, the fiber sequence in the proof of \cite[Lemma 3.2]{Z25} also implies that $\mathrm{fib}(f_n)\simeq \Omega (\mathrm{fib}(f_{n-1})\otimes_R\bar{C})$, since $\rho$ admits a left inverse. Hence, we deduce that
	\begin{equation}\label{eq:fib}
		\mathrm{fib}(i^*_n)\simeq \Omega^n(M\otimes_R\bar{C}^{\otimes_R^{n+1}}\otimes_R N).
	\end{equation}
\end{rem}

\begin{thm}[Theorem \ref{thm:orient}]
	Let $R$ be a connective $\E$-ring spectrum and $X$ be a simply connected space. Given a connective $\E$-$R$-algebra $A$ and a map $f\colon X\to B\G$, there is an equivalence
	\[A\otimes_R\mathrm{coTHH}^R(\T f;R[X])\simeq A\otimes\mathrm{coTHH}(\mathbb{S}[X])\]
	if $f$ is $A$-oriented. Here we abbreviate $\mathrm{coTHH}^{\mathbb{S}}(\mathbb{S}[X])$ to $\mathrm{coTHH}(\mathbb{S}[X])$.
\end{thm}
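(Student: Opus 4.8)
The plan is to apply Proposition \ref{prop:base change} twice — once over $R$ and once over $\mathbb{S}$ — so as to reduce both sides of the asserted equivalence to $\mathrm{coTHH}^A(A[X])$, using the orientation to perform the essential comodule identification in between. Note first that since $R[X]$ is cocommutative (being the image of the cocommutative coalgebra $X$ under a symmetric monoidal functor), the $R[X]$-comodule $\T f$ of Theorem \ref{thm:comodule} is canonically an $R[X]$-bicomodule, so that $\mathrm{coTHH}^R(\T f;R[X])$ is defined.

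First I would verify the hypotheses of Proposition \ref{prop:base change} for the data $R$, $A$, $C = R[X] = \T(R_X)$, $M = \T f$: as $X$ is pointed and simply connected, $\Sigma^\infty X$ is $2$-connective, hence so is the coaugmentation coideal $\overline{R[X]} \simeq R\otimes\Sigma^\infty X$ because $R$ is connective; and $\T f = \colim_X(X\to B\G\hookrightarrow\M)$ is connective, being a colimit of copies of $R$. Since $\mathrm{Ind}_R^A$ and $\T$ are symmetric monoidal and colimit-preserving, $\mathrm{Ind}_R^A(R[X]) = \T(A_X) = A[X]$ as $\E$-$A$-coalgebras, so the proposition gives
\[A\otimes_R\mathrm{coTHH}^R(\T f;R[X]) \;\simeq\; \mathrm{coTHH}^A\bigl(A\otimes_R\T f;\,A[X]\bigr).\]
Running the same proposition with $R$ replaced by $\mathbb{S}$ and $C = M = \mathbb{S}[X]$ — whose coaugmentation coideal $\Sigma^\infty X$ is again $2$-connective — yields $\mathrm{coTHH}^A(A[X]) \simeq A\otimes\mathrm{coTHH}(\mathbb{S}[X])$. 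Thus the theorem reduces to producing an equivalence of $A[X]$-comodules $A\otimes_R\T f \simeq A[X]$, the target carrying its comultiplication comodule structure.

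This last reduction is where the orientation enters, and it is the step I expect to be the main obstacle. Let $\bar f\colon X\to Pic(A)$ be the composite of $f$ with the functor $Pic(R)\to Pic(A)$ induced by $\mathrm{Ind}_R^A$. Because $\T$ and $\mathrm{Ind}_R^A$ are colimit-preserving and symmetric monoidal, and because the comodule structure of Theorem \ref{thm:comodule} is constructed functorially from the straightening/unstraightening adjunction (as in the proof of Proposition \ref{prop:sym str}), base change along $\mathrm{Ind}_R^A$ carries the $R[X]$-comodule $\T f$ to the $A$-Thom spectrum of $\bar f$, regarded as a comodule over $\T(A_X)=A[X]$ via the diagonal of $X$. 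An $\E$-$A$-orientation of $f$ provides in particular a nullhomotopy of $\bar f$, hence an equivalence in $\Ss_{/Pic(A)}$, over $\mathrm{id}_X$, between $(X,\bar f)$ and the trivial object $X\to *\to Pic(A)$; since the comodule structure over this trivial object is the diagonal of $X$ in either case, the equivalence is automatically one of comodules, and applying $\T$ identifies $A\otimes_R\T f$ with $A[X]$ as a comodule over itself via its comultiplication. Chaining the three equivalences then finishes the proof. The part requiring care is exactly that the trivialization $A\otimes_R\T f\simeq A[X]$ supplied by the orientation is one of $A[X]$-comodules — not merely of $A$-modules or of $\E$-$A$-algebras — which forces one to track the comodule structure of Theorem \ref{thm:comodule} through both the base-change functor $\Ss_{/Pic(R)}\to\Ss_{/Pic(A)}$ and the nullhomotopy.
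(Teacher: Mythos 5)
Your proposal is correct and follows essentially the same route as the paper: both arguments apply Proposition \ref{prop:base change} to move the computation to $A$-modules (and again over $\mathbb{S}$ to identify $\mathrm{coTHH}^A(A[X])$ with $A\otimes\mathrm{coTHH}(\mathbb{S}[X])$), and both use the contractibility of $B(A,A)$ in the pullback description of $B(R,A)$ to trivialize the local system after base change. The only presentational difference is that the paper trivializes all the terms $(f,R_X,\dots,R_X)$ of the cyclic cosimplicial diagram simultaneously via the lift to $B(R,A)$, whereas you trivialize $\bar f$ over $\mathrm{id}_X$ and invoke the functoriality of the comodule structure of Theorem \ref{thm:comodule} --- these amount to the same coherence argument.
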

\begin{proof}
	Recall from \cite[Definition 3.14]{AB19} that an $A$-orientation of $f$ is a lift $\tilde{f}$ of $f$:
	\[
    \xymatrix{
    &B(R,A) \ar[d]
    &\\
    X \ar[r]_-{f} \ar@{-->}[ru]^-{\tilde{f}}   
    &Pic(R),
    }\]
    where $B(R,A)$ is the full symmetric monoidal subgroupoid of $Pic(R)_{/A}$ consisting of maps of $R$-module $h:M\to A$ such that the adjoint $h^{\dagger}:\mathrm{Ind}_R^A(M)\to A$ is an equivalence. Since $\mathrm{Ind}_R^A(R)\simeq A$, the constant functor $R_X:X\to Pic(R)$ admits a lift $\tilde{R}_X$ to $B(R,A)$, and hence we have the following commutative diagram
    \[
    \xymatrix@C=1.7cm@R=1cm{
    &B(R,A) \times \cdots \times B(R,A) \ar[r]^-{\otimes} \ar[d]
    &B(R,A) \ar[d] &\\
    X \times \cdots \times X \ar[r]_-{(f,R_X,\cdots,R_X)} \ar[ru]^-{(\tilde{f},\tilde{R}_X,\cdots,\tilde{R}_X)}
    &Pic(R) \times \cdots \times Pic(R)  \ar[r]_-{\otimes}
    &Pic(R).
    }\]
    In other words, maps $(f,R_X,\cdots,R_X)$ are also $A$-oriented. As observed in the proof of \cite[Proposition 3.16]{AB19}, $B(R,A)$ is the pullback of the cospan $Pic(R)\xrightarrow{\mathrm{Ind}_R^A}Pic(A)\longleftarrow B(A,A)$ with $B(A,A)$ contractible. Therefore, all functors $\mathrm{Ind}_R^A\circ(f,R_X,\cdots,R_X)$ are equivalent to the constant functor $R_A$, and hence
    \[\mathrm{coTHH}_A(\mathrm{Ind}_R^A(\T f);\mathrm{Ind}_R^A(R[X])) \simeq \mathrm{coTHH}_A(A[X]).\]
    It remains to prove that the left-hand side is equivalent to $\mathrm{Ind}_R^A[\mathrm{coTHH}^R(\T f;R[X])]$, while the right-hand side is equivalent to $A[LX].$ By \cite[Theorem 3.7]{HS21}, there is an equivalence $\mathrm{coTHH}^{\mathbb{S}}(\mathbb{S}[X])\simeq \mathbb{S}[LX]$. Consequently, $A[LX]\simeq A\otimes\mathrm{coTHH}_{\mathbb{S}}(\mathbb{S}[X])$. Now both desired equivalences follow from Proposition \ref{prop:base change}, since $X$ is simply connected.
\end{proof}

Our next aim is to equip $\mathrm{coTHH}^R(\T f;R[X])$ with a filtration induced by the cellular structure of $X$. This provides an approach to compute $\mathrm{coTHH}^R(\T f;R[X])$ that is based on the space's own geometry. Given a space $X$, let $X_n$ denote its $n$-skeleton. Also recall that the space that we consider in the next theorem is simply connected, and hence we can assume that its $0$-skeleton consists of a single point. The following lemma is needed in the proof of Theorem \ref{thm:S1}.

\begin{lem}\label{lem:forget}
	Let $\mathcal{C}$ and $\mathcal{D}$ be symmetric monoidal categories and $F\colon\mathcal{C}\to\mathcal{D}$ be a symmetric monoidal functor. Given a map $A\to B$ of $\mathbb{E}_1$-algebras in $\mathcal{C}$, the following diagram commutes
	    \[
      \xymatrix{
      \mathrm{LMod}_B(\mathcal{C}) \ar[r]^-{U_{\mathcal{C}}} \ar[d]_-{F}      
      &\mathrm{LMod}_A(\mathcal{C}) \ar[d]_-{F}   
      &\\
      \mathrm{LMod}_{F(B)}(\mathcal{D}) \ar[r]^-{U_{\mathcal{D}}}
      &\mathrm{LMod}_{F(A)}(\mathcal{D}),
    }\]
    where $U_{\mathcal{C}}$ and $U_{\mathcal{D}}$ are forgetful functors.
\end{lem}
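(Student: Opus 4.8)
The plan is to recast the statement as the compatibility of a symmetric monoidal functor with Cartesian transport in the fibration of left modules. Recall from \cite[\S 4.2.3]{HA} that for a monoidal $\infty$-category $\mathcal{E}$, viewed as left-tensored over itself, one has $\mathrm{LMod}(\mathcal{E}) = \mathrm{Alg}_{\mathcal{LM}}(\mathcal{E})$ together with a forgetful functor $q_{\mathcal{E}}\colon \mathrm{LMod}(\mathcal{E}) \to \mathrm{Alg}(\mathcal{E})$ whose fibre over an $\mathbb{E}_1$-algebra $A$ is $\mathrm{LMod}_A(\mathcal{E})$. The symmetric monoidal functor $F$ is in particular monoidal, hence an $\mathcal{LM}$-monoidal functor for the self-actions, and so it induces $\mathrm{LMod}(F)\colon \mathrm{LMod}(\mathcal{C}) \to \mathrm{LMod}(\mathcal{D})$ which covers $\mathrm{Alg}(F)\colon \mathrm{Alg}(\mathcal{C}) \to \mathrm{Alg}(\mathcal{D})$ and is compatible with the forgetful functors to $\mathcal{C}$ and $\mathcal{D}$; restricted to the fibre over $B$ (resp.\ over $F(B)$) it recovers the vertical functor labelled $F$ in the diagram. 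The first step is simply to record this functoriality and the commutativity of the attendant square of forgetful functors.

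The second step identifies the horizontal maps as Cartesian transports. In addition to being a coCartesian fibration (base change of scalars, \cite[Corollary 4.2.3.2]{HA}), $q_{\mathcal{E}}$ is a Cartesian fibration: given $f\colon A \to B$ in $\mathrm{Alg}(\mathcal{E})$ and a left $B$-module $M$, the restriction of scalars $U_{\mathcal{E}}(M)$ together with the morphism $U_{\mathcal{E}}(M) \to M$ whose underlying morphism in $\mathcal{E}$ is $\mathrm{id}_M$ is a $q_{\mathcal{E}}$-Cartesian lift of $f$ at $M$. Combined with the conservativity of the forgetful functor $\mathrm{LMod}_A(\mathcal{E}) \to \mathcal{E}$, this yields the criterion that a morphism of $\mathrm{LMod}(\mathcal{E})$ is $q_{\mathcal{E}}$-Cartesian if and only if its image in $\mathcal{E}$ is an equivalence. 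In particular $U_{\mathcal{C}}$ and $U_{\mathcal{D}}$ are precisely the Cartesian transports of $q_{\mathcal{C}}$ along $f$ and of $q_{\mathcal{D}}$ along $F(f)$, respectively.

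The two steps now combine formally. Since $F$ carries equivalences to equivalences and $\mathrm{LMod}(F)$ is compatible with the forgetful functors to $\mathcal{C}$ and $\mathcal{D}$, the criterion above shows that $\mathrm{LMod}(F)$ carries $q_{\mathcal{C}}$-Cartesian morphisms to $q_{\mathcal{D}}$-Cartesian morphisms. A functor covering $\mathrm{Alg}(F)$ that preserves Cartesian morphisms intertwines the Cartesian transports; applied to the edge $f$, this yields a canonical equivalence $\mathrm{LMod}_A(F) \circ U_{\mathcal{C}} \simeq U_{\mathcal{D}} \circ \mathrm{LMod}_B(F)$ of functors $\mathrm{LMod}_B(\mathcal{C}) \to \mathrm{LMod}_{F(A)}(\mathcal{D})$, which is exactly the asserted commutativity. (Equivalently: straighten $q_{\mathcal{C}}$ and $q_{\mathcal{D}}$ to functors $\mathrm{Alg}(\mathcal{C})^{\mathrm{op}} \to \mathrm{Cat}_{\infty}$ and $\mathrm{Alg}(\mathcal{D})^{\mathrm{op}} \to \mathrm{Cat}_{\infty}$ sending an algebra to its category of left modules and an algebra map to restriction of scalars; then $\mathrm{LMod}(F)$ straightens to a natural transformation covering $\mathrm{Alg}(F)^{\mathrm{op}}$, and naturality at $f$ is the claim.)

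The step I expect to require the most care is the second one. The coCartesian structure on $q_{\mathcal{E}}$ and the description of its coCartesian edges are standard, but I have used that $q_{\mathcal{E}}$ is moreover a Cartesian fibration with Cartesian transport given by restriction of scalars, together with the ``underlying equivalence'' characterisation of its Cartesian edges; verifying these needs the (elementary) facts that restriction of scalars always exists and is functorial in the algebra variable, that it is computed by the identity on underlying objects, and that the forgetful functor on module categories is conservative. Once this is in place, the remainder is pure naturality and applies verbatim to $\mathrm{RMod}$, or to modules over any $\infty$-operad.
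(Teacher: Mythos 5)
Your proposal is correct and follows essentially the same route as the paper's proof: both pass to the functor $\mathrm{LMod}(F)\colon\mathrm{LMod}(\mathcal{C})\to\mathrm{LMod}(\mathcal{D})$ over $\mathrm{Alg}(F)$, invoke \cite[Corollary 4.2.3.2]{HA} to view the forgetful functors to algebras as Cartesian fibrations whose Cartesian edges are detected by the underlying-object functor, and conclude because $F$ preserves equivalences and commutes with the underlying-object functors. The only quibble is that your parenthetical attributes the coCartesian (base-change) structure to that corollary, which actually supplies the Cartesian structure you use; this does not affect the argument.
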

\begin{proof}
	Since $F$ is symmetric monoidal, there is a commutative diagram
	    \[
      \xymatrix{
      \mathrm{LMod}(\mathcal{C}) \ar[r]^-{F} \ar[d]_-{\theta_{\mathcal{C}}}     
      &\mathrm{LMod}(\mathcal{D}) \ar[d]^-{\theta_{\mathcal{D}}}
      &\\
      \mathrm{Alg}_{\mathbb{E}_1}(\mathcal{C}) \ar[r]^-{F}
      &\mathrm{Alg}_{\mathbb{E}_1}(\mathcal{D}),
    }\]	
where both vertical arrows are forgetful functors, which are Cartesian fibrations by \cite[Corollary 4.2.3.2]{HA}. Consequently, there is a natural transformation 
\[F\circ U_{\mathcal{C}}\Longrightarrow U_{\mathcal{C}}\circ F\]
of functors from $\mathrm{LMod}_B(\mathcal{C})$ to $\mathrm{LMod}_{F(A)}(\mathcal{D})$. To show this natural transformation is an equivalence, it suffices to prove that $F\colon \mathrm{LMod}(\mathcal{C})\to \mathrm{LMod}(\mathcal{D})$ maps $\theta_{\mathcal{C}}$-Cartesian morphisms to $\theta_{\mathcal{D}}$-Cartesian morphisms. Equivalently, by \cite[Corollary 4.2.3.2]{HA}, we need to show that the composition $\mathrm{LMod}(\mathcal{C})\xrightarrow{F} \mathrm{LMod}(\mathcal{D})\to \mathcal{D}$ maps $\theta_{\mathcal{C}}$-Cartesian morphisms to equivalences, where the second functor sends a module to its underlying object. However, this composition is equivalent to $\mathrm{LMod}(\mathcal{C})\to \mathcal{C}\xrightarrow{F}\mathcal{D}$, where the first functor sends a module to its underlying object and maps $\theta_{\mathcal{C}}$-Cartesian morphisms to equivalences, by \cite[Corollary 4.2.3.2]{HA}. 
\end{proof}

\begin{thm}[Theorem \ref{thm:S1}]
	Let $R$ be a connective $\E$-ring and $X$ be a simply connected space. Given a map $X\to B\G$, the $R$-module $\mathrm{coTHH}^R(\T f;R[X])$ admits an exhaustive filtration 
	\[\{\mathrm{coTHH}^R(\T (f|_{X_n});R[X])\}_{n\geq0},\]
    so that $\colim\limits_n \mathrm{coTHH}^R(\T (f|_{X_n});R[X])\simeq \mathrm{coTHH}^R(\T f;R[X])$.
	Furthermore, the $n$-th associated graded of this filtration is given by
	\[gr_n\simeq \bigoplus_{X(n)}\Sigma^{n}R[\Omega X],\ n\geq0,\]
	where $X(n)$ is the number of $n$-cells in $X$.
\end{thm}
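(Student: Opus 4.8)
The plan is to obtain the filtration directly from the skeletal filtration $X_0\subset X_1\subset\cdots$ of $X$ (with $X_0=\ast$, which we may assume since $X$ is simply connected), to identify the successive quotients of the coefficient, and then to push everything through $\mathrm{coTHH}^R(-;R[X])$. Write $G=\Omega X$ and $F_n:=\mathrm{coTHH}^R(\T(f|_{X_n});R[X])$, where $\T(f|_{X_n})$ is regarded as an $R[X]$-bicomodule by corestriction along the coalgebra map $R[X_n]\to R[X]$ induced by $X_n\hookrightarrow X$ (using cocommutativity of $R[X]$ to pass from a one-sided comodule to a bicomodule); the inclusions $X_{n-1}\hookrightarrow X_n$ then give $R[X]$-comodule maps $\T(f|_{X_{n-1}})\to\T(f|_{X_n})$, hence maps $F_{n-1}\to F_n$. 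With the convention $X_{-1}=\emptyset$, so that $F_{-1}=\mathrm{coTHH}^R(0;R[X])=0$, the associated graded is $gr_n=\mathrm{cofib}(F_{n-1}\to F_n)$, and one checks directly that $n=0$ gives $F_0=\mathrm{coTHH}^R(R;R[X])\simeq R[\Omega X]=\Sigma^0R[\Omega X]$, as $X(0)=1$.

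The first observation is that $\mathrm{coTHH}^R(-;R[X])$ preserves cofiber sequences in the coefficient. Indeed, the cosimplicial object $\mathrm{coTHH}^R_\ast(M;R[X])$ has $k$-th term $M\otimes_R R[X]^{\otimes_R k}$, an exact functor of the bicomodule $M$ (the coface maps are natural in $M$, involving only the coactions of $M$ and the comultiplication and twists of $R[X]$); since $\Tot$ is a limit and $\M$ is stable, a cofiber sequence of bicomodules is carried to a cofiber sequence of $R$-modules. Hence $gr_n\simeq\mathrm{coTHH}^R(Q_n;R[X])$, where $Q_n:=\mathrm{cofib}(\T(f|_{X_{n-1}})\to\T(f|_{X_n}))$ is formed in the stable category of $R[X]$-bicomodules.

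Next I would identify $Q_n$. The cellular pushout $X_n\simeq X_{n-1}\cup_{\bigsqcup_{X(n)}S^{n-1}}\bigsqcup_{X(n)}D^n$ lives over $X$ via the characteristic maps, and the assignment sending a space over $X$ to the Thom spectrum of its composite to $B\G$ equipped with the $R[X]$-comodule structure of Theorem \ref{thm:comodule} — equivalently, obtained by applying the oplax symmetric monoidal functor $\colim$ to the associated line bundle on $X$ and corestricting to $R[X]$ — preserves colimits. Thus $\T(f|_{X_n})\simeq\T(f|_{X_{n-1}})\sqcup_{\bigoplus_{X(n)}\T(f|_{S^{n-1}})}\bigoplus_{X(n)}\T(f|_{D^n})$ as a pushout of $R[X]$-comodules, so $Q_n\simeq\mathrm{cofib}\bigl(\bigoplus_{X(n)}\T(f|_{S^{n-1}})\to\bigoplus_{X(n)}\T(f|_{D^n})\bigr)$. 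Now the characteristic map $D^n\to X$ is null (contractible source), hence so is the attaching map $S^{n-1}\to X_{n-1}\to X$; therefore $\T(f|_{D^n})\simeq R$, $\T(f|_{S^{n-1}})\simeq R[S^{n-1}]$, and since $R[S^{n-1}]\to R[X]$ factors through the coaugmentation $R\to R[X]$, both carry the trivial $R[X]$-comodule structure. Using $\Si S^{n-1}\simeq\mathbb S\oplus\Sigma^{n-1}\mathbb S$ and that the map $\Si S^{n-1}\to\Si D^n\simeq\mathbb S$ induced by $S^{n-1}\hookrightarrow D^n$ is the projection, one gets $\mathrm{cofib}(R[S^{n-1}]\to R)\simeq\Sigma^nR$, so $Q_n\simeq\bigoplus_{X(n)}\Sigma^nR$ with the trivial $R[X]$-bicomodule structure. (For $X(n)>1$ one attaches the $n$-cells one at a time, or notes the colimit construction above is additive.) Combining with the previous step, and using that $\mathrm{coTHH}^R(-;R[X])$ commutes with suspensions and with direct sums together with the equivalence $\mathrm{coTHH}^R(R;R[X])\simeq R[\Omega X]$ of Example \ref{exmp:Omega}, we obtain $gr_n\simeq\bigoplus_{X(n)}\Sigma^nR[\Omega X]$.

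Finally, exhaustiveness. Since each $gr_n\simeq\bigoplus_{X(n)}\Sigma^nR[\Omega X]$ is $n$-connective, in every fixed homotopy degree the tower $\{\pi_\ast F_n\}_n$ is eventually constant, so $\pi_\ast(\colim_nF_n)\cong\pi_\ast F_N$ for $N\gg 0$. On the other hand $\T f\simeq\colim_n\T(f|_{X_n})$ (the Thom functor preserves colimits), so $\mathrm{cofib}(\T(f|_{X_n})\to\T f)\simeq\colim_{m\ge n}\mathrm{cofib}(\T(f|_{X_n})\to\T(f|_{X_m}))$ is $(n+1)$-connective, being a filtered colimit of iterated extensions of the $Q_m$ with $m\ge n+1$. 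The connectivity estimate in the proof of Proposition \ref{prop:base change} — valid here because $\overline{R[X]}$, hence $\overline{R[X]\otimes_RR[X]}$, is $2$-connective as $X$ is simply connected — shows that $\mathrm{coTHH}^R(-;R[X])$ takes $j$-connective bicomodules to $j$-connective spectra. Hence $\mathrm{cofib}(F_n\to\mathrm{coTHH}^R(\T f;R[X]))\simeq\mathrm{coTHH}^R(\mathrm{cofib}(\T(f|_{X_n})\to\T f);R[X])$ is $(n+1)$-connective, so $F_n\to\mathrm{coTHH}^R(\T f;R[X])$ is an isomorphism on homotopy in a range growing with $n$; comparing with the previous sentence, $\colim_nF_n\to\mathrm{coTHH}^R(\T f;R[X])$ is a $\pi_\ast$-isomorphism, hence an equivalence. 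The step I expect to be most delicate is the identification of $Q_n$ with its comodule structure in the third paragraph: it rests on arranging the cellular pushout as a pushout of $R[X]$-comodules and on tracking that the contractible cells trivialize the relevant comodule structures; everything else is formal given the results already established.
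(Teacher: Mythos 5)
Your proposal is correct and follows essentially the same route as the paper: both use the cellular pushout to identify the cofiber of $\T(f|_{X_{n-1}})\to\T(f|_{X_n})$ as $\bigoplus_{X(n)}\Sigma^nR$ with trivial $R[X]$-comodule structure, apply the exactness of $\mathrm{coTHH}^R(-;R[X])$ in the coefficient together with $\mathrm{coTHH}^R(R;R[X])\simeq R[\Omega X]$ from Example \ref{exmp:Omega}, and settle exhaustiveness by a connectivity estimate coming from the cobar tower. The only cosmetic difference is in the last step, where the paper interchanges the colimit with the finite limits $\mathrm{Tot}^i$ while you bound the connectivity of $\mathrm{cofib}(F_n\to\mathrm{coTHH}^R(\T f;R[X]))$ directly; these rest on the same facts.
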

\begin{proof}
	For brevity, we denote the restriction of $f$ to $X_n$ by $f_n$. Since $X=\colim\limits_{n}X_n$, there is an equivalence $f\simeq \colim\limits_n f_n$ in $\Ss_{/B\G}$. It follows from Remark \ref{rm:col} that we can write $\T f$ as the colimit
	\begin{equation}\label{eq:colim}
		\T f\simeq \colim\limits_n \colim\limits_{X_n}f \simeq \colim\limits_n\T f_n,
	\end{equation}
	Furthermore, since $X_n=X_{n-1}\sqcup_{\bigvee_{X(n)} S^{n-1}} \bigvee_{X(n)}D^n$, another application of Remark \ref{rm:col} yields the following pushout square
	\[
      \xymatrix{
      \colim\limits_{\bigvee_{X(n)} S^{n-1}}f \ar[r] \ar[d]      &\colim\limits_{\bigvee_{X(n)}D^n}f \ar[d] &\\
      \colim\limits_{X_{n-1}}f \ar[r]
      &\colim\limits_{X_n}f.
    }\]
    Note that $D^n$ is contractible and admits a final object, and the upper horizontal map admits a retraction. Consequently, when restricted to any copy of $S^{n-1}$, the colimit of $f$ is $R[S^{n-1}]$, since it factors through $D^n$. Hence, we obtain that
    \[\colim\limits_{\bigvee_{X(n)} S^{n-1}}f\simeq \bigoplus_{X(n)}R[S^{n-1}]\simeq R\oplus\bigoplus_{X(n)}\Sigma^{n-1} R\simeq \colim\limits_{\bigvee_{X(n)}D^n}f\oplus \bigoplus_{X(n)}\Sigma^{n-1} R,\]
    where the first direct sum is taken in the category $(\M)_{R/}$, and the latter two are taken in $\M$. Therefore, let $X_{-1}$ be the empty set, then the above pushout diagram yields a fiber sequence
    \begin{equation}\label{eq:fib th}
    \bigoplus_{X(n)}\Sigma^{n-1} R \to \T f_{n-1} \to \T f_n   	
    \end{equation}
    for all $n\geq0$. Now, given any pointed maps $f\colon X\to B\G$ and $X\to Y$, by the proof of Theorem \ref{thm:comodule} and the dual of Lemma \ref{lem:forget}, there is a commutative diagram
    \[
      \xymatrix{
      \mathrm{CoMod}_{R_X}(\Ss_{/B\G}) \ar[r]^-{U} \ar[d]_-{\T}      
      &\mathrm{CoMod}_{R_Y}(\Ss_{/B\G}) \ar[d]_-{\T}   
      &\\
      \mathrm{CoMod}_{R[X]}(\M) \ar[r]^-{U}
      &\mathrm{CoMod}_{R[Y]}(\M),
    }\]
    since the Thom spectrum functor is symmetric monoidal by Proposition \ref{prop:sym str}. Hence, unwinding the definitions, the fiber sequence \ref{eq:fib th} is a fiber sequence of $R[X]$-comodules, and the colimit \ref{eq:colim} is a colimit of $R[X]$-comodules. Here the latter step relies on the fact that colimits commute with the tensor product in the category $\Ss_{/B\G}$. 
    
    Furthermore, since the structure maps $S^{n-1}\to X$ factor through $D^n\to X$, we see that the $R[X]$-comodule structure on $\Sigma^{n-1} R$ is trivial, i.e., it lies in the image of the forgetful functor along $R[*]\to R[X]$. Applying the functor $\mathrm{coTHH}^R(-;R[X])$ to the fiber sequence \ref{eq:fib th}, we obtain the fiber sequence
    \begin{equation*}
    	\begin{split}
    		\mathrm{coTHH}^R(\T f_{n-1};R[X]) &\to\mathrm{coTHH}^R(\T f_n;R[X])\\
    		    &\to\Sigma^n\mathrm{coTHH}^R(\bigoplus_{X(n)} R;R[X]).
    	\end{split}
    \end{equation*}
    A similar argument to the proof of Proposition \ref{prop:base change} shows that
    \[\mathrm{coTHH}^R(\bigoplus_{X(n)} R;R[X])\simeq \bigoplus_{X(n)} \mathrm{coTHH}^R(R;R[X]).\]
    Consequently, using Proposition \ref{prop:base change} and Example \ref{exmp:Omega}, the last term in the fiber sequence is equivalent to $\Sigma^n\bigoplus_{X(n)}(R\otimes\mathbb{S}[\Omega X]) \simeq \bigoplus_{X(n)}\Sigma^{n}R[\Omega X]$. Hence, it remains to prove that the natural map
    \[\colim\limits_n\mathrm{coTHH}^R(\T f_n;R[X])\to \mathrm{coTHH}^R(\T f;R[X])\]
    is an equivalence.
    By the same Milnor exact sequence argument as in Proposition \ref{prop:base change}, together with Equation \ref{eq:fib}, we obtain that the map
    \[\mathrm{coTHH}^R(\T f_n;R[X])\to \mathrm{Tot}^i \Omega_R^*(\T f_n,R[X]\otimes_R R[X],R[X])\]
    is $i$-connective. Since colimits preserve connectivity, the map
    \[\colim\limits_n\mathrm{coTHH}^R(\T f_n;R[X])\to \colim\limits_n\mathrm{Tot}^i \Omega_R^*(\T f_n,R[X]\otimes_R R[X],R[X])\]
    is also $i$-connective with right-hand side equivalent to $\mathrm{Tot}^i \Omega_R^*(\T f,R[X]\otimes_R R[X],R[X])$, since $\mathrm{Tot}^i$ is equivalent to a finite limit. Passing to the limit yields the desired equivalence.
\end{proof}

\begin{cor}
	Let $R$ be a connective $\E$-ring, $X$ be a simply connected space and $A$ be a connective $\E$-$R$-algebra. Given a map $f:X\to B\G$, there is a spectral sequence converging  to the $A$-homology of $\mathrm{coTHH}^R(\T f;R[X])$ with $E_1$-page
	\[E_1^{p,q}=\bigoplus_{X(p)}\pi_{p+q}(\Sigma^{p}A[\Omega X])\Longrightarrow \pi_{p+q}(\mathrm{Ind}_R^A(\mathrm{coTHH}^R(\T f;R[X]))).\]
	In particular, if $f$ is $A$-oriented, then we obtain a spectral sequence to compute $A_*(LX)$.
\end{cor}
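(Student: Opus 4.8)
The plan is to base-change the exhaustive filtration of Theorem~\ref{thm:S1} along the functor $\mathrm{Ind}_R^A=-\otimes_RA$ and then run the spectral sequence associated to the resulting filtered $A$-module. First I would note that $\mathrm{Ind}_R^A$, being a left adjoint, preserves the colimit $\colim_n\mathrm{coTHH}^R(\T(f|_{X_n});R[X])\simeq\mathrm{coTHH}^R(\T f;R[X])$ of Theorem~\ref{thm:S1}, so $\{\mathrm{Ind}_R^A\,\mathrm{coTHH}^R(\T(f|_{X_n});R[X])\}_{n\ge 0}$ is again an exhaustive filtration, now of $\mathrm{Ind}_R^A\,\mathrm{coTHH}^R(\T f;R[X])$. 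Next I would apply the exact, symmetric monoidal, colimit-preserving functor $\mathrm{Ind}_R^A$ to the fiber sequences
\[
\mathrm{coTHH}^R(\T(f|_{X_{n-1}});R[X])\to\mathrm{coTHH}^R(\T(f|_{X_n});R[X])\to\bigoplus_{X(n)}\Sigma^n R[\Omega X]
\]
produced in the proof of Theorem~\ref{thm:S1}; using $\mathrm{Ind}_R^A(R[\Omega X])\simeq A\otimes_R(R\otimes\Si\Omega X)\simeq A[\Omega X]$, this shows that the $n$-th associated graded of the base-changed filtration is $\bigoplus_{X(n)}\Sigma^n A[\Omega X]$.

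Then the spectral sequence of this filtered $A$-module has $E_1$-page
\[
E_1^{p,q}=\pi_{p+q}\Bigl(\bigoplus_{X(p)}\Sigma^p A[\Omega X]\Bigr)=\bigoplus_{X(p)}\pi_{p+q}(\Sigma^p A[\Omega X]),
\]
with differentials $d_r\colon E_r^{p,q}\to E_r^{p-r,q+r-1}$, abutting to $\pi_{p+q}\bigl(\mathrm{Ind}_R^A\,\mathrm{coTHH}^R(\T f;R[X])\bigr)$. For convergence I would argue as follows: since $X$ is simply connected, $\Omega X$ is connected, so $A[\Omega X]=A\otimes\Si\Omega X$ is connective (as $A$ is connective), whence $gr_p\simeq\bigoplus_{X(p)}\Sigma^p A[\Omega X]$ is $p$-connective. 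As the filtration is concentrated in degrees $p\ge 0$ (equivalently $F_{-1}=\mathrm{coTHH}^R(\T(f|_{X_{-1}});R[X])=0$, with $X_{-1}=\varnothing$), in each fixed total degree $d$ only the columns $0\le p\le d$ contribute, so the spectral sequence is bounded in each total degree and converges strongly to the stated abutment.

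Finally, for the ``in particular'' clause I would invoke Theorem~\ref{thm:orient}: when $f$ is $\E$-$A$-oriented it gives $\mathrm{Ind}_R^A\,\mathrm{coTHH}^R(\T f;R[X])\simeq A\otimes\mathrm{coTHH}(\mathbb{S}[X])$, and the Hess--Shipley equivalence $\mathrm{coTHH}(\mathbb{S}[X])\simeq\mathbb{S}[LX]$ from \cite{HS21} identifies this with $A[LX]=A\otimes\Si LX$, whose homotopy is $A_*(LX)$; substituting into the abutment yields the promised spectral sequence for $A_*(LX)$. I expect the only part requiring genuine care to be the convergence bookkeeping in the previous paragraph --- verifying that exhaustiveness together with the unbounded growth in connectivity of the associated graded really forces strong convergence and vanishing of the relevant $\lim^1$-terms; the remaining steps are formal consequences of Theorems~\ref{thm:S1} and~\ref{thm:orient} together with Proposition~\ref{prop:base change}.
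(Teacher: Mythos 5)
Your proposal is correct and takes essentially the same route as the paper: base-change the exhaustive filtration of Theorem~\ref{thm:S1} along $\mathrm{Ind}_R^A$, identify the associated graded as $\bigoplus_{X(p)}\Sigma^pA[\Omega X]$, take the spectral sequence of the resulting filtered object, and deduce the ``in particular'' clause from Theorem~\ref{thm:orient}. The only cosmetic difference is in the convergence step, where the paper cites compatibility of the standard $t$-structure on $\M$ with sequential colimits together with \cite[Proposition 1.2.2.14]{HA}, while you argue directly from the increasing connectivity of the graded pieces; both arguments are standard and equivalent in substance.
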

\begin{proof}
	Recall that for any $\E$-ring $R$, there is a standard $t$-structure on $\M$, where an $R$-module is coconnective if and only if its underlying spectrum is coconnective. Therefore, the standard $t$-structure on $\M$ is compatible with sequential colimits. Now the spectral sequence follows from Theorem \ref{thm:S1} and \cite[Proposition 1.2.2.14]{HA}, and the last claim follows from Theorem \ref{thm:orient}. Alternatively, the colimit is computed in the category $\mathrm{Sp}$, so the same reasoning gives the result.
\end{proof}

\begin{exmp}
	Let $R$ be a connective $\E$-ring equipped with an $S^1$-action in the category $\M$. Equivalently, we can view $R$ as a functor $\mathbb{C}P^{\infty}\to B\G$ and its Thom spectrum is just the $S^1$-orbit $R_{S^1}$. Since $\mathbb{C}P^n$ can be constructed by attaching only even-dimensional cells, using a similar argument as Theorem \ref{thm:S1}, we can obtain a filtration of $\mathrm{coTHH}^R(R_{S^1};R[\mathbb{C}P^{\infty}])$ given by $\{\mathrm{coTHH}^R(R_{\mathbb{C}P^n};R[\mathbb{C}P^{\infty}])\}_{n\geq0}$ with the associated graded
	\[gr_n\simeq \Sigma^{2n}R[\Omega \mathbb{C}P^{\infty}]\simeq \Sigma^{2n}R\oplus\Sigma^{2n+1}R,\ n\geq0.\]
	Indeed, we will see that $\mathrm{coTHH}^R(R_{S^1};R[\mathbb{C}P^{\infty}])\simeq R_{S^1}[\Omega \mathbb{C}P^{\infty}] \simeq R_{S^1} \oplus \Sigma R_{S^1}$ (see Proposition \ref{prop:omega}). Therefore, the filtration above is just the direct sum of the usual filtration on $R_{S^1}$ and the suspension of this filtration.
\end{exmp}

\section{The proof of Theorem \ref{thm:geo}}\label{sec4}

In this section, we give the proof of Theorem \ref{thm:geo}. More precisely, we use Proposition \ref{prop:map} and Theorem \ref{thm:Thf} to reduce the computation of $\mathrm{coTHH}^R(\T f;R[X])$ to the computation of topological coHochschild homology without coefficients.

First, by Theorem \ref{thm:Thf}, there is an equivalence 
\[\T f\otimes_R(R[X])\simeq \T(f,R_X)\simeq R\otimes_{R[G\times G]} R,\]
where the second factor $G$ acts on both copies of $R$ trivially. Hence, to compute $\mathrm{coTHH}^R(\T f;R[X]),$ we also need to describe how the coaction is realized in the aforementioned bar constructions. More generally, given a map $g\colon X\to Y$ between connected spaces, the commutative diagram
\[
    \xymatrix{
    X \ar[r]^-{g} \ar[dr]_-{h}
    & Y \ar[d]^-{f}
    \\
    &Pic(R),
    }
\]
induces a map $g_*\colon\T h\to \T f$ between Thom spectra, or equivalently, a map 
\begin{equation}\label{eq:g_*}
	g_*: R\otimes_{R[\Omega X]}R \to R\otimes_{R[\Omega Y]}R,
\end{equation}
by Theorem \ref{thm:Thf}. The next proposition shows that, as expected, $g_*$ is induced by the map $g$ and the bar constructions of the above two modules.

\begin{prop}\label{cor:map}
	The map in \ref{eq:g_*} is the geometric realization of
	\[\mathrm{Bar}_{\bullet}(id_R,g,id_R):\mathrm{Bar}_{\bullet}(R, R[\Omega X], R) \to \mathrm{Bar}_{\bullet}(R, R[\Omega Y], R).\]
\end{prop}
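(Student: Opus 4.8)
The plan is to deduce the proposition from the naturality, in the space variable, of the identifications made in the proof of Theorem \ref{thm:Thf}. Write $G=\Omega X$, $H=\Omega Y$ and $\phi=\Omega g\colon G\to H$, so that $R[\phi]\colon R[G]\to R[H]$ is the induced map of $\mathbb{E}_1$-$R$-algebras; this is the ``$g$'' appearing in the statement. The first step is to record that the equivalence $F_X\colon\M^X\xrightarrow{\ \simeq\ }\mathrm{RMod}_{R[\Omega X]}(\M)$ of Equation \ref{eq:presheaf} is natural in the connected based space $X$. All the functors composed in the proof of Proposition \ref{prop:map} — the bar construction, $B$, $\M[-]$, $\mathrm{End}(1_{-})$, $(-)^{\times}$, and passage to right adjoints — are functorial, and \cite[Corollary 4.12]{CCRY25} is natural in its defining map, so running the same chain of adjunctions on $g$ (exactly as in the proof of Proposition \ref{prop:map}, and as in Remark \ref{rem:ind} where it is carried out for a change of base ring) yields a commuting square
\[
\xymatrix{
\M^Y \ar[r]^-{F_Y} \ar[d]_-{g^*} & \mathrm{RMod}_{R[H]}(\M) \ar[d]^-{\mathrm{res}_{R[\phi]}} \\
\M^X \ar[r]^-{F_X} & \mathrm{RMod}_{R[G]}(\M),
}
\]
together with the analogous compatibilities for $p_X^*,p_Y^*$ and for the left adjoints $g_!$ and $(-)\otimes_{R[G]}R[H]$. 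Under $F_X$ the functor $\colim_X$ (left adjoint to $p_X^*$) corresponds to base change along the augmentation $R[G]\to R$, so Theorem \ref{thm:Thf} reads $\T h=\colim_X h\simeq F_X(h)\otimes_{R[G]}R$, with $F_X(h)=h(e)=R$ the ``left copy of $R$'' carrying the $R[G]$-action coming from $h$; and since $h=g^*f$, the square identifies this action with the $R[\phi]$-restriction of the $R[H]$-action on $F_Y(f)$.

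The second step identifies the map $g_*$. It is, by construction, the map on colimits induced by the subdiagram $g\colon X\to Y$, that is $g_*\colon\colim_X g^*f\simeq\colim_Y g_!g^*f\xrightarrow{\ \colim_Y(\varepsilon)\ }\colim_Y f$, where $\varepsilon\colon g_!g^*f\to f$ is the counit of $g_!\dashv g^*$. Transporting along $F_X,F_Y$ and the compatibilities above, $\varepsilon$ becomes the counit of the base-change/restriction adjunction along $R[\phi]$, evaluated at $F_Y(f)$, and hence $g_*$ becomes the composite
\[
F_X(h)\otimes_{R[G]}R\ \simeq\ \bigl(F_X(h)\otimes_{R[G]}R[H]\bigr)\otimes_{R[H]}R\ \longrightarrow\ F_Y(f)\otimes_{R[H]}R,
\]
which is precisely the change-of-rings map attached to $R[\phi]$ and the $R[H]$-module $F_Y(f)$. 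To conclude, one uses the formal fact that such a change-of-rings map is the geometric realization of the levelwise map of two-sided bar objects $\mathrm{Bar}_\bullet(F_X(h),R[G],R)\to\mathrm{Bar}_\bullet(F_Y(f),R[H],R)$ that in simplicial degree $n$ is the identity on the underlying $R$-modules of the two outer factors and $R[\phi]^{\otimes n}$ on the inner $n$ factors. Under the identifications $F_X(h)=R=F_Y(f)$ with the module structures of Theorem \ref{thm:Thf}, this is exactly $\mathrm{Bar}_\bullet(id_R,g,id_R)$, which proves the proposition.

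The one point I expect to require care is the first step: checking that $\M^X\simeq\mathrm{RMod}_{R[\Omega X]}(\M)$ is natural in the space so that the displayed square commutes. This is bookkeeping with the adjunction chain of Proposition \ref{prop:map} (and the functoriality of right adjoints, via $\mathrm{Pr}^L\simeq(\mathrm{Pr}^R)^{op}$) rather than a new idea. One may sidestep it by testing against $\mathrm{Map}_R(-,M)$ for all $R$-modules $M$: the left-hand map then dualizes to $g^*\colon\mathrm{Map}_{\M^Y}(f,p_Y^*M)\to\mathrm{Map}_{\M^X}(h,p_X^*M)$ by the universal property of the colimit defining $\T$, the right-hand map dualizes to module restriction along $R[\phi]$, these agree under the mapping-space identifications in the proof of Theorem \ref{thm:Thf}, and the Yoneda lemma finishes. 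The remaining manipulations with colimits and bar constructions are routine.
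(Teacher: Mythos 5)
Your proposal is correct, and it is worth noting that the ``sidestep'' you offer in your last paragraph is essentially the paper's own proof: the paper tests against $\mathrm{Map}_R(-,N)$ for all $R$-modules $N$, uses the cosimplicial mapping-space description from Proposition \ref{prop:map} to identify the induced map with $(g^{\times n})^*$ on cobar terms, converts this via the two base-change adjunctions (along $R\to R[\Omega Y]\to R$ and $R\to R[\Omega X]\to R$) into the dual of the levelwise bar map, and concludes by Yoneda. Your primary route is genuinely different: rather than dualizing, you first establish that the equivalence $\M^X\simeq\mathrm{RMod}_{R[\Omega X]}(\M)$ of Equation \ref{eq:presheaf} is natural in the based connected space, so that $g^*$ corresponds to restriction along $R[\Omega g]$; this identifies $g_*$ directly as the change-of-rings map $R\otimes_{R[\Omega X]}R\to R\otimes_{R[\Omega Y]}R$, whose bar-level description is the standard functoriality of the two-sided bar construction in the triple $(M,A,N)$. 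What your route buys is a structural explanation of \emph{why} the answer is the levelwise map $\mathrm{Bar}_{\bullet}(id_R,g,id_R)$, and the naturality square is reusable (it is the space-variable analogue of what Remark \ref{rem:ind} does for the base ring); what it costs is exactly the bookkeeping you flag, namely tracing $R[\Omega g]$ through the adjunction chain of Proposition \ref{prop:map} and the uniqueness of adjoints. The paper's approach avoids that verification entirely by never leaving the world of mapping spaces, at the price of being less explicit about the module-theoretic meaning of $g_*$. Both arguments are sound; I see no gap in either.
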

\begin{proof}
	By Proposition \ref{prop:map} and its proof, for any $R$-module $N$, the map
	\[\mathrm{Map}_{\M^Y}(f,N)\to \mathrm{Map}_{\M^X}(h,N),\]
	is the totalization of the map between cosimplicial objects:
	\[(g^{\times n})^*:\mathrm{Map}_R(R\otimes \Omega Y^{\times n},N) \to \mathrm{Map}_R(R\otimes \Omega X^{\times n},N).\]
	Note that we can also consider $N$ as a right $R[\Omega Y]$-module (or a right $R[\Omega X]$-module, respectively), via the forgetful functor. Hence, we can use the base change adjunction twice: on the left via $R\to R[\Omega Y]$ followed by $R[\Omega Y] \to R$, and on the right via $R\to R[\Omega X]$ followed by $R[\Omega X] \to R$. Combining these with Theorem \ref{thm:Thf}, the Yoneda Lemma yields the desired result.
\end{proof}

Therefore, the coaction of $R[X]$ on $\T f$ is induced by the map
\[\mathrm{Bar}_{\bullet}(id_R,\Delta,id_R):\mathrm{Bar}_{\bullet}(R, R[G], R) \to \mathrm{Bar}_{\bullet}(R, R[G\times G], R).\]
The following two lemmas will be used in the proof of our main theorem.
\begin{lem}\label{lem:con}
	Let $R$ be a connective $\E$-ring and $C$ be a connective coaugmented $R$-coalgebra whose homotopy groups of coaugmentation coideal $\bar{C}$ vanish in degree less than $1$, let $M$ be a connective right $C$-comodule and $N$ be a connective left $C$-comodule, then the fiber of the natural map 
	\[i_n^*:\mathrm{Tot}^{n+1}(\Omega^*_R(M,C,N))\to \mathrm{Tot}^n(\Omega^*_R(M,C,N))\]
    is connective. In particular, $\mathrm{Tot}^n(\Omega^*_R(M,C,N))$ is connective for any $n\geq0$.
\end{lem}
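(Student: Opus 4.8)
The plan is to imitate the connectivity argument already used in the proof of Proposition~\ref{prop:base change}, but working with the one-sided cobar complex $\Omega^*_R(M,C,N)$ rather than the cyclic one. Recall that $\mathrm{Tot}^n(\Omega^*_R(M,C,N))$ fits into a tower whose successive fibers are controlled by the formula \eqref{eq:fib}, namely $\mathrm{fib}(i_n^*)\simeq \Omega^n(M\otimes_R\bar C^{\otimes_R^n}\otimes_R N)$. So the entire statement reduces to a connectivity estimate for this $R$-module.

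First I would record the connectivity input: $R$ is connective, $M$ and $N$ are connective $R$-modules, and $\bar C$ is $1$-connective by hypothesis. Since $R$ is connective, the relative tensor product over $R$ of connective $R$-modules is connective (this is the standard fact that $\M$ has a $t$-structure compatible with $\otimes_R$, as used in the corollary after Theorem~\ref{thm:S1}), and tensoring a connective module with a $1$-connective module yields a $1$-connective module. Hence $M\otimes_R\bar C^{\otimes_R^n}\otimes_R N$ is $n$-connective: each of the $n$ copies of $\bar C$ contributes one to the connectivity. Applying $\Omega^n$ shifts down by $n$, so $\mathrm{fib}(i_n^*)\simeq \Omega^n(M\otimes_R\bar C^{\otimes_R^n}\otimes_R N)$ is $0$-connective, i.e.\ connective, which is the first assertion.

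For the ``in particular'' clause I would argue by induction on $n$. The base case $\mathrm{Tot}^0(\Omega^*_R(M,C,N))\simeq M$ is connective by hypothesis. For the inductive step, the fiber sequence
\[
\mathrm{fib}(i_n^*)\to \mathrm{Tot}^{n+1}(\Omega^*_R(M,C,N))\to \mathrm{Tot}^n(\Omega^*_R(M,C,N))
\]
exhibits $\mathrm{Tot}^{n+1}$ as an extension of a connective $R$-module by a connective $R$-module (using the first part for the fiber and the inductive hypothesis for the base), and connective $R$-modules are closed under extensions, so $\mathrm{Tot}^{n+1}(\Omega^*_R(M,C,N))$ is connective as well.

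I do not expect a genuine obstacle here; the only point requiring a little care is making sure the connectivity bookkeeping in \eqref{eq:fib} is applied correctly — that the $n$ tensor factors of $\bar C$ really each raise connectivity by one and that the loop functor $\Omega^n$ exactly cancels this, leaving connectivity~$0$ rather than something negative. This is why the hypothesis demands $\bar C$ be $1$-connective (vanishing in degrees $<1$) and not merely connective: if $\bar C$ were only $0$-connective the fibers could fail to be connective. Everything else is a routine appeal to the compatibility of the $t$-structure on $\M$ with $\otimes_R$ and with extensions.
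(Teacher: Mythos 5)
Your argument is exactly the one in the paper: identify $\mathrm{fib}(i_n^*)$ via Equation \eqref{eq:fib} as $\Omega^n(M\otimes_R\bar C^{\otimes_R^n}\otimes_R N)$, observe that the $1$-connectivity of $\bar C$ makes this tensor product $n$-connective so that $\Omega^n$ leaves it connective, and deduce the ``in particular'' clause from the resulting fiber sequences. The connectivity bookkeeping is correct, and no gap remains.
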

\begin{proof}
	By Equation \ref{eq:fib}, the fiber of $i^*_n$ is equivalent to $\Omega^n(M\otimes_R\overline{C}^{\otimes_R^n}\otimes_R N)$ which is obviously connective.
\end{proof}

Abusing notation, we write $\Tot^n(\mathrm{coTHH}_*^R(M;C))$ for $\Tot^n(\Omega_R^*(M,C\otimes_R C^{op},C))$ from now on. This should not be confused with $\lim_{\Delta_{\leq n}}\mathrm{coTHH^R_*}(M;C)$.
\begin{lem}\label{lem:0-con}
	Let $R$ be a connective $\E$-ring, $C$ be a connective coaugmented $R$-coalgebra whose homotopy groups of coaugmentation coideal $\bar{C}$ vanish in degree less than $1$ and $M$ be a connective $C$-bicomodule, then $R$-modules $\mathrm{coTHH}^R(M;C)$ and $\Tot^n(\mathrm{coTHH}_*^R(M;C))$ are connective.
\end{lem}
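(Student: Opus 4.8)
The plan is to deduce Lemma \ref{lem:0-con} directly from Lemma \ref{lem:con} together with the cobar filtration analysis already available. First I would recall that $\mathrm{coTHH}^R(M;C)$ is by definition the totalization $\Tot(\mathrm{coTHH}_*^R(M;C)) = \Tot(\Omega_R^*(M, C\otimes_R C^{op}, C))$, so that $\Tot^n(\mathrm{coTHH}_*^R(M;C))$ is precisely $\Tot^n(\Omega_R^*(M, C\otimes_R C^{op}, C))$ in the notation fixed just before the statement. The key observation is that Lemma \ref{lem:con} applies with the coalgebra $C$ replaced by $C\otimes_R C^{op}$, the right comodule $M$, and the left comodule $C$: indeed $\overline{C\otimes_R C^{op}}$ sits in a fiber sequence with $M\otimes_R(C\otimes_R C^{op})$-type terms whose coaugmentation coideal is built from $\bar C$, hence has vanishing homotopy in degrees $<1$ as soon as $\bar C$ does (this is the same connectivity bookkeeping used in the proof of Proposition \ref{prop:base change}).

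Granting that, the proof splits into the two assertions. For $\Tot^n(\mathrm{coTHH}_*^R(M;C))$: argue by induction on $n$. The base case $\Tot^0 = M$ is connective by hypothesis. For the inductive step, Lemma \ref{lem:con} (applied to $C\otimes_R C^{op}$, $M$, $C$) gives that the fiber of $i_n^*\colon \Tot^{n+1}\to\Tot^n$ is connective — concretely, by Equation \ref{eq:fib} it is $\Omega^n(M\otimes_R \overline{C\otimes_R C^{op}}^{\otimes_R n}\otimes_R C)$, which is connective since each tensor factor is connective and $\overline{C\otimes_R C^{op}}$ is at least $1$-connective so its $n$-fold tensor power is at least $n$-connective, absorbing the loop. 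Since $\Tot^n$ is connective by induction and connective $R$-modules are closed under extensions, $\Tot^{n+1}$ is connective. For $\mathrm{coTHH}^R(M;C) = \lim_n \Tot^n(\mathrm{coTHH}_*^R(M;C))$: the limit of a tower of connective objects along maps whose fibers are highly connected is connective. More precisely, the fiber $F^n$ of $\mathrm{coTHH}^R(M;C)\to\Tot^n$ is $n$-connective by the Milnor sequence (the successive fibers $\Omega^k(M\otimes_R\overline{C\otimes_R C^{op}}^{\otimes_R k}\otimes_R C)$ are $\geq (k-n)$-connective for $k>n$, wait — more carefully, each is at least $0$-connective and their connectivities tend to infinity, so the $\lim^1$ and $\lim$ terms assembling $F^n$ are $(n-1)$-connective), whence $\mathrm{coTHH}^R(M;C)$ differs from the connective object $\Tot^0=M$ only by arbitrarily highly connective corrections, so $\pi_i = 0$ for $i<0$.

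The main obstacle I anticipate is purely bookkeeping: verifying that $\overline{C\otimes_R C^{op}}$ has homotopy groups vanishing below degree $1$ when $\bar C$ does, so that Lemma \ref{lem:con} genuinely applies with the bicomodule coalgebra $C\otimes_R C^{op}$ in place of $C$. This follows from the cofiber sequence $\bar C\to C\to R$ (and its $C^{op}$ analogue): smashing over $R$ and using connectivity of $C$, one gets that $\overline{C\otimes_R C^{op}}$ is an extension of copies of $\bar C\otimes_R R\simeq\bar C$, $R\otimes_R\bar C\simeq\bar C$, and $\bar C\otimes_R\bar C$ (which is $2$-connective), each of which is $1$-connective, hence $\overline{C\otimes_R C^{op}}$ is $1$-connective. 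Everything else is a routine connectivity estimate combined with the already-established Equation \ref{eq:fib} and the standard fact that connective $R$-modules form a full subcategory closed under extensions and limits of towers with highly-connected fibers.
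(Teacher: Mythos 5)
Your overall route is the same as the paper's: identify $\mathrm{coTHH}^R(M;C)$ with the totalization of the cobar complex $\Omega_R^*(M,C\otimes_R C^{op},C)$, check that the coaugmentation coideal of $C\otimes_R C^{op}$ is $1$-connective, and feed this into Lemma \ref{lem:con}. The first half of your argument (connectivity of each $\mathrm{Tot}^n$ by induction, using Equation \ref{eq:fib} and the fact that connective $R$-modules are closed under extensions) is exactly what the paper does, and your bookkeeping showing that $\overline{C\otimes_R C^{op}}$ is $1$-connective is correct; the paper simply records the splitting $\bar C\oplus\bar C^{op}\oplus(\bar C\otimes_R\bar C^{op})$.

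The second half, however, contains a genuine error. Under the hypotheses of this lemma $\bar C$ is only $1$-connective, so $\overline{C\otimes_R C^{op}}$ is $1$-connective and each successive fiber $\Omega^k\bigl(M\otimes_R\overline{C\otimes_R C^{op}}^{\otimes_R k}\otimes_R C\bigr)$ is $\Omega^k$ of a $k$-connective object, hence only guaranteed to be $0$-connective for every $k$: the connectivities do \emph{not} tend to infinity, and the fiber $F^n$ of $\mathrm{coTHH}^R(M;C)\to\mathrm{Tot}^n$ need not be $n$-connective. (Those stronger estimates are precisely what the $2$-connectivity hypothesis of Proposition \ref{prop:base change} buys; you have imported them into a setting where they are unavailable, as your own ``wait'' suggests you suspected.) The conclusion still holds, by the more direct argument the paper intends: since every $\mathrm{Tot}^n$ is connective, the Milnor sequence gives $\varprojlim_n\pi_i(\mathrm{Tot}^n)=0$ for $i<0$ and $\varprojlim^1_n\pi_{i+1}(\mathrm{Tot}^n)=0$ for $i<-1$; for $i=-1$, note that $\pi_{-1}$ of each fiber of $i_n^*$ vanishes (it equals $\pi_{n-1}$ of an $n$-connective object), so the tower $\{\pi_0(\mathrm{Tot}^n)\}$ consists of surjections, is Mittag--Leffler, and has vanishing $\varprojlim^1$. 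You should replace the ``arbitrarily highly connective corrections'' step with this argument.
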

\begin{proof}
	By \cite[Theorem 1.1]{Z25}, there is an equivalence 
	\[\mathrm{coTHH}^R(M; C)\simeq M\square_{C\otimes_R C^{op}} C. \]
	Since the $R$-coalgebra $C\otimes_R C^{op}$ is coaugmented with a $1$-connective coaugmentation coideal $\bar{C}\oplus\bar{C}^{op}\oplus(\bar{C}\otimes_R\bar{C}^{op})$, the result follows from Lemma \ref{lem:con} and the Milnor sequence
	\[0 \to {\varprojlim\limits_{n}}^1\pi_{*+1}(\mathrm{Tot}^n) \to \pi_*(\mathrm{coTHH}^R(M;C)) \to \varprojlim\limits_n\pi_*(\mathrm{Tot}^n) \to 0,\]
	here we write $\mathrm{Tot}^n$ for $\mathrm{Tot}^n(\Omega_R^*(M,C\otimes_R C^{op},C)).$
\end{proof}

Before proving the main theorem, recall that given a simplicial object $X_{\bullet}$ in a stable category $\mathcal{C}$,  there is a skeleton filtration $\{sk_n(X_{\bullet})\}$ on the geometric realization of $X_{\bullet}$ with the associated graded
\[gr_n(|X_{\bullet}|)=\Sigma^n(X_n/L_nX),\]
here $L_nX$ is the $n$-th latching object (see \cite[Proposition $\mathrm{VII}$.1.7]{GJ99}) defined as a colimit over a finite poset. Note that the nerve of a finite poset is a finite simplicial set, hence $L_nX$ is a finite colimit.

\begin{thm}[Theorem \ref{thm:geo}]
	Let $R$ be a connective $\E $-ring spectrum and $f\colon X\to B\G$ be a map of spaces with $X$ simply connected, then there is an equivalence
	\[\mathrm{coTHH}^R(\T f;R[X])\simeq \colim\limits_{\Delta^{op}}\mathrm{coTHH}^R(R[G^n])\]
    where $G$ is the loop space of $X$ and $\{\mathrm{coTHH}^R(R[G^n])\}_{n\geq 0}$ is a simplicial $R$-module such that in degree $n$: 
	\begin{enumerate}[(a)]
		\item the face map $d_0$ is induced by the action of $G$ on $R$,
		\item the face maps $d_i$ are induced by the multiplication of $G$ for $0< i < n$,
		\item the face map $d_n$ is induced by the trivial action of $G$ on $R$, or equivalently, by the augmentation map $R[G] \to R$,
		\item the degeneracy maps are induced by the unit map $R\to R[G]$.
	\end{enumerate} 
\end{thm}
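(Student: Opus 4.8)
The plan is to exhibit both sides as iterated (co)limits of a single bicosimplicial--simplicial object $(p,m)\mapsto (R[G^m])^{\otimes_R(p+1)}$, and then to interchange a totalization with a geometric realization. First I would write $\mathrm{coTHH}^R(\T f;R[X])$ as the totalization of its cosimplicial $R$-module, whose $p$-th term is $\T f\otimes_R R[X]^{\otimes_R p}$. Since $R[X]=\T(R_X)$ and the Thom spectrum functor is symmetric monoidal (Proposition \ref{prop:sym str}), this term is $\T(f,R_X,\dots,R_X)$, the Thom spectrum of a map $X^{\times(p+1)}\to B\G$ with $p$ trivial entries; by Theorem \ref{thm:Thf} it equals $R\otimes_{R[G^{p+1}]}R\simeq |\,\mathrm{Bar}_\bullet(R,R[G^{p+1}],R)\,|$, whose bar-degree $m$ term is $(R[G^{p+1}])^{\otimes_R m}\simeq (R[G^m])^{\otimes_R(p+1)}=\mathrm{coTHH}^R_*(R[G^m])_p$. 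Because $\otimes_R$ commutes with geometric realization (the remark following Theorem \ref{thm:Thf}), this is compatible in $p$: the cosimplicial object $\mathrm{coTHH}^R_*(\T f;R[X])$ is the geometric realization over $m$ of $(p,m)\mapsto (R[G^m])^{\otimes_R(p+1)}$, which for fixed $m$ is the cosimplicial object computing $\mathrm{coTHH}^R(R[G^m])$.

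Second I would identify all the structure maps. The codegeneracies of $\mathrm{coTHH}^R_*(\T f;R[X])$ are induced by the counit $R[X]\to R$, which under $G=\Omega X$ corresponds to the augmentation $R[G]\to R$ and hence, after the identification above, to the codegeneracies of $\mathrm{coTHH}^R_*(R[G^m])$; the cofaces are the right/left coactions, the comultiplications of $R[X]$, and the cyclic twist. By Proposition \ref{cor:map} together with the description of the coaction of $R[X]$ on $\T f$ as $\mathrm{Bar}_\bullet(\mathrm{id}_R,\Delta,\mathrm{id}_R)$, each of these corresponds to inserting the diagonal of a $G$-factor (resp. the augmentation, resp. a permutation), i.e.\ to the matching coface of $\mathrm{coTHH}^R_*(R[G^m])$. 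In the simplicial variable $m$, for fixed $p$ the object $m\mapsto (R[G^m])^{\otimes_R(p+1)}=\mathrm{Bar}_\bullet(R,R[G^{p+1}],R)$ has $d_0$ given by the action of $G^{p+1}$ on $R$ ($f$-action on the first factor, trivial on the rest), $d_i$ by the multiplication of $G^{p+1}$ for $0<i<m$, $d_m$ by the augmentation, and degeneracies by the unit $R\to R[G^{p+1}]$; applying $\mathrm{Tot}_p$ turns this into exactly the simplicial $R$-module described in (a)--(d). Thus $\mathrm{coTHH}^R(\T f;R[X])=\mathrm{Tot}_p\,|\,m\mapsto (R[G^m])^{\otimes_R(p+1)}\,|$, whereas the right-hand side of the theorem is $|\,m\mapsto \mathrm{Tot}_p\,(R[G^m])^{\otimes_R(p+1)}\,|=|\,m\mapsto\mathrm{coTHH}^R(R[G^m])\,|$.

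It remains to interchange $\mathrm{Tot}_p$ with $|{-}|_m$. Each partial totalization $\mathrm{Tot}^i_p$ is a finite limit, so---writing $|{-}|=\colim_N sk_N$ with $sk_N$ a finite colimit, hence a finite limit in the stable category $\M$---it commutes with geometric realization. Moreover, since $X$ is simply connected, $\overline{R[X]}$ is $2$-connective, so by Equation \ref{eq:fib} the fibers of $\mathrm{Tot}^{i+1}_p\to\mathrm{Tot}^i_p$ for the cobar tower of $\mathrm{coTHH}^R(-;R[X])$ become $i$-connective, uniformly in $m$ and in $N$ (applying $sk_N$ does not alter the coalgebra $R[X]$ governing these fibers), while the skeletal filtration of $|\,\mathrm{Bar}_\bullet(R,R[G],R)\,|$ has $2N$-connective associated graded because $\overline{R[G]}$ is $1$-connective ($G$ being connected). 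A Milnor-sequence argument then interchanges $\colim_N$ and $\varprojlim_i$ in the double system $\{\mathrm{Tot}^i_p\,sk_N\}$: for fixed $N$ one has $\varprojlim_i \mathrm{Tot}^i_p\,sk_N=\mathrm{Tot}_p\,sk_N$ with vanishing $\varprojlim^1$ and with $\mathrm{Tot}_p\,sk_N\to\mathrm{Tot}^i_p\,sk_N$ uniformly $i$-connective, which forces both iterated (co)limits to agree. This gives the claimed equivalence.

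I expect the interchange in the last paragraph to be the main obstacle: totalizations do not commute with geometric realizations in general, and the whole argument rests on extracting the $2$-connectivity of $\overline{R[X]}$ from the hypothesis that $X$ is simply connected---the same input used in Proposition \ref{prop:base change} and Theorem \ref{thm:S1}---which makes the relevant cobar tower converge with uniformly growing connectivity. By comparison, the identification of all the (co)simplicial structure maps in the second paragraph is conceptually routine given Proposition \ref{cor:map}, but is notationally heavy because of the two (co)simplicial directions.
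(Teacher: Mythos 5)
Your first two paragraphs reproduce the paper's own strategy: the same bicosimplicial--simplicial object $\mathrm{Bar}_{\bullet}\mathrm{coTHH}_*(R[G])$ with $(p,m)$-term $(R[G^m])^{\otimes_R(p+1)}\simeq\mathrm{Bar}_m(R,R[G^{p+1}],R)$, the same identification of the structure maps via Theorem \ref{thm:Thf} and Proposition \ref{cor:map}, and the same reduction to an interchange of $\mathrm{Tot}$ with geometric realization. The commutation of $\mathrm{Tot}^i$ with $|-|$ (finite limits commute with colimits in the stable category $\M$) is also fine.

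The gap is in the mechanism you give for the interchange itself. You assert that $\mathrm{Tot}_p\,sk_N\to\mathrm{Tot}^i_p\,sk_N$ is uniformly $i$-connective because "applying $sk_N$ does not alter the coalgebra $R[X]$ governing these fibers." It does alter it: for fixed $p$ the $m$-th row of the bicomplex is $\mathrm{coTHH}^R_*(R[G^m])_p=(R[G^m])^{\otimes_R(p+1)}$, so the $p$-cosimplicial object obtained after truncating in $m$ is \emph{not} a cobar complex over $R[X]\otimes_R R[X]^{op}$; the coalgebra $R[X]$ with its $2$-connective coaugmentation coideal only reappears after realizing all of the $m$-direction. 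The coalgebras actually governing the rows are the $R[G^m]$, whose coaugmentation coideals are merely $1$-connective ($G^m$ is only connected), so by Equation \ref{eq:fib} the fibers of $\mathrm{Tot}^{i+1}_p\to\mathrm{Tot}^i_p$ of a row are $\Omega^i$ of something $i$-connective, i.e.\ just connective, with no growth in $i$; the same is then true for the finite colimits $sk_N$. Hence the uniform $i$-connectivity you rely on fails, and your concluding Milnor-sequence step has nothing to run on. The paper gets the needed divergence from the \emph{other} direction: Lemma \ref{lem:0-con} gives that $\mathrm{Tot}^i\mathrm{coTHH}^R_*(R[G^m])$ is connective \emph{uniformly in $i$}, the latching objects are finite colimits of such and hence connective, so the skeletal associated graded $gr^i_{n+1}\simeq\Sigma^{n+1}\mathrm{Tot}^i[\mathrm{coTHH}^R_*(R[G^{n+1}])/L_{n+1}(-)]$ is $(n+1)$-connective uniformly in $i$; therefore $\mathrm{Tot}^i(|-|/sk_n)$ is $(n+1)$-connective for all $i$, and the Milnor sequence makes the cofiber of $\mathrm{Tot}(sk_n)\to\mathrm{Tot}(|-|)$ $n$-connective, so the colimit over $n$ is an equivalence. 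You do mention the $2N$-connectivity of the skeletal graded pieces in passing, but your argument as written does not use it, and it is exactly this $N$-direction estimate (uniform in $i$), not an $i$-direction estimate (uniform in $N$), that makes the interchange work.
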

\begin{proof}
    By Theorem \ref{thm:comodule} and its proof, the constant functor $R_X$ is an $\E$-coalgebra in $\Ss_{/B\G}$ and $f$ is an $R_X$-comodule. Furthermore, the cyclic cosimplicial object $\mathrm{coTHH}^R_*(\T f;R[X])$ is the composition
    \[\Delta\xrightarrow{\mathrm{coTHH}^{\Ss_{/B\G}}_*(f;R_X)}\Ss_{/B\G}\xrightarrow{\T} \M,\]
    since the Thom spectrum functor is symmetric monoidal by Proposition \ref{prop:sym str}. Using Theorem \ref{thm:Thf} and Proposition \ref{cor:map}, we see that $\mathrm{coTHH}^R_*(\T f;R[X])$ is equivalent to
    \[\Delta\to \mathrm{Fun}(\Delta^{op},\M)\xrightarrow{|-|}\M,\]
    where the first functor is adjoint to the functor
    \begin{equation*}
        \mathrm{Bar}_{\bullet}\mathrm{coTHH}_*(R[G]):\Delta \times \Delta^{op}\longrightarrow \M
    \end{equation*}
    obtained by applying the functor $\mathrm{Bar}^R_{\bullet}(R,-,R)$ to the cosimplicial object $R[\mathrm{coTHH}_*^{\Ss}(G)]$ (note that $R$ is a right $R[G^n]$-module, where the first copy of $G$ acts on $R$ via the given action and the other copies act trivially. Simultaneously, $R$ is also a left $R[G^n]$-module, where every copy of $G$ acts trivially). Informally, $\mathrm{coTHH}^R(\T f;R[X])$ is obtained from the following diagram by first taking the geometric realization of the columns, and then taking the totalization of the resulting cosimplicial object.
    \begin{equation*}
    \begin{tikzcd}[column sep=1.2em]
       \vdots
       \arrow[d, shift left=2.25ex] 
       \arrow[d, shift left=0.75ex] 
       \arrow[d, shift right=0.75ex] 
       \arrow[d, shift right=2.25ex]       
       & \vdots
       \arrow[d, shift left=2.25ex] 
       \arrow[d, shift left=0.75ex] 
       \arrow[d, shift right=0.75ex] 
       \arrow[d, shift right=2.25ex]
       & \vdots
       \arrow[d, shift left=2.25ex] 
       \arrow[d, shift left=0.75ex] 
       \arrow[d, shift right=0.75ex] 
       \arrow[d, shift right=2.25ex] \\
       R\otimes_R R[G\times G]\otimes_R R 
       \arrow[r, shift left=0.5ex] 
       \arrow[r, shift right=0.5ex, swap] 
       \arrow[d, shift left=1.5ex] 
       \arrow[d] 
       \arrow[d, shift right=1.5ex] 
       & R\otimes_R R[(G\times G)^{\times 2}]\otimes_R R 
       \arrow[r, shift left=1.5ex] 
       \arrow[r] 
       \arrow[r, shift right=1.5ex]  
       \arrow[d, shift left=1.5ex] 
       \arrow[d] 
       \arrow[d, shift right=1.5ex] 
       &  R\otimes_R R[(G\times G\times G)^{\times 2}]\otimes_R R
       \arrow[d, shift left=1.5ex] 
       \arrow[d] 
       \arrow[d, shift right=1.5ex] 
       \arrow[r, shift left=2.25ex] 
       \arrow[r, shift left=0.75ex] 
       \arrow[r, shift right=0.75ex] 
       \arrow[r, shift right=2.25ex]
       & \cdots \\
       R\otimes_R R[G]\otimes_R R 
       \arrow[r, shift left=0.5ex] 
       \arrow[r, shift right=0.5ex, swap]
       \arrow[d, shift left=0.5ex] 
       \arrow[d, shift right=0.5ex, swap] 
       & R\otimes_R R[G\times G]\otimes_R R 
       \arrow[r, shift left=1.5ex] 
       \arrow[r] 
       \arrow[r, shift right=1.5ex]
       \arrow[d, shift left=0.5ex] 
       \arrow[d, shift right=0.5ex, swap]     
       & R\otimes_R R[G\times G\times G]\otimes_R R
       \arrow[d, shift left=0.5ex] 
       \arrow[d, shift right=0.5ex, swap]
       \arrow[r, shift left=2.25ex] 
       \arrow[r, shift left=0.75ex] 
       \arrow[r, shift right=0.75ex] 
       \arrow[r, shift right=2.25ex]
       & \cdots \\
       R\otimes_R R 
       \arrow[r, shift left=0.5ex] 
       \arrow[r, shift right=0.5ex, swap] 
       & R\otimes_R R 
       \arrow[r, shift left=1.5ex] 
       \arrow[r] 
       \arrow[r, shift right=1.5ex]     
       & R\otimes_R R 
       \arrow[r, shift left=2.25ex] 
       \arrow[r, shift left=0.75ex] 
       \arrow[r, shift right=0.75ex] 
       \arrow[r, shift right=2.25ex]
       & \cdots \\
   \end{tikzcd}
   \end{equation*}
   
   Now the left-hand side of the comparison map
   \[f:|\mathrm{Tot}(\mathrm{Bar}_{\bullet}\mathrm{coTHH}_*(R[G]))| \to \mathrm{Tot}|\mathrm{Bar}_{\bullet}\mathrm{coTHH}_*(R[G])|\]
   is equivalent to the geometric realization of $\{\mathrm{coTHH}^R(R[G^n])\}_{n\geq0}$, hence it suffices to prove that the comparison map is an equivalence. By the $\infty$-categorical Dold--Kan correspondence (see \cite[Theorem 1.2.4.1]{HA}), there is an equivalence
   \[|\mathrm{Tot}(\mathrm{Bar}_{\bullet}\mathrm{coTHH}_*(R[G]))|\simeq \colim\limits_n[sk_n\mathrm{Tot}(\mathrm{Bar}_{\bullet}\mathrm{coTHH}_*(R[G]))].\]
   Although $N(\Delta^{op}_{\leq n})$ is an infinite simplicial set, there is a cofinal functor $\mathcal{P}([n])^{op}\to \Delta^{op}_{\leq n}$, such that the simplicial set $N(\mathcal{P}([n])^{op})$ is finite. Hence, the comparison map is obtained from the collection of maps
   \[f_n:\Tot\{sk_n[\mathrm{Bar}_{\bullet}\mathrm{coTHH}_*(R[G])]\} \to \Tot|\mathrm{Bar}_{\bullet}\mathrm{coTHH}_*(R[G])|\]
   by passing to the colimit. We claim that the cofiber of $f_n$ is $n$-connective, passing to the colimit gives the desired equivalence. To prove the claim, 
   we consider the associated graded
   \begin{equation*}
   	\begin{split}
   	gr^i_{n+1}
   	   & \simeq \Tot^i sk_{n+1}[\mathrm{Bar}_{\bullet}\mathrm{coTHH}_*(R[G]])/\Tot^i sk_n[\mathrm{Bar}_{\bullet}\mathrm{coTHH}_*(R[G])]\\
   	   & \simeq \Tot^i \{sk_{n+1}[\mathrm{Bar}_{\bullet}\mathrm{coTHH}_*(R[G])]/sk_n[\mathrm{Bar}_{\bullet}\mathrm{coTHH}_*(R[G])]\} \\
   	   & \simeq \Sigma^{n+1} \Tot^i [\mathrm{coTHH}^R_*(R[G^{\times n+1}])/L_{n+1}\mathrm{Bar}_{\bullet}\mathrm{coTHH}_*(R[G])].
   	\end{split}
   \end{equation*}
   Since $\M$ is stable and the $(n+1)$-th latching object is a finite colimit, we have that 
   \[\Tot^i (L_{n+1}\mathrm{Bar}_{\bullet}\mathrm{coTHH}_*(R[G]))\simeq L_{n+1}\Tot^i (\mathrm{Bar}_{\bullet}\mathrm{coTHH}_*(R[G])).\]
   By Lemma \ref{lem:0-con}, $\Tot^i\mathrm{coTHH}^R(R[G^n])$ is connective for $i,n\geq0$, hence, as a colimit of connective spectra, $L_{n+1}\Tot^i(\mathrm{Bar}_{\bullet}\mathrm{coTHH}_*(R[G]))$ is also connective. Therefore, the associated graded $gr^i_{n+1}$ is $(n+1)$-connective and hence so is
   \begin{equation*}
   \begin{split}
    & \Tot^i \{|\mathrm{Bar}_{\bullet}\mathrm{coTHH}_*(R[G])|/sk_n[\mathrm{Bar}_{\bullet}\mathrm{coTHH}_*(R[G])]\} \\
   	& \simeq \Tot^i |\mathrm{Bar}_{\bullet}\mathrm{coTHH}_*(R[G])|/\Tot^i sk_n[\mathrm{Bar}_{\bullet}\mathrm{coTHH}_*(R[G])] \\
   	& \simeq \colim\limits_{m\geq n}\Tot^i sk_m[\mathrm{Bar}_{\bullet}\mathrm{coTHH}_*(R[G])]/\Tot^i sk_n[\mathrm{Bar}_{\bullet}\mathrm{coTHH}_*(R[G])].
   \end{split}   	
   \end{equation*}
   Here we use once again that $\mathrm{Tot}^i$ is a finite limit and therefore commutes with colimits. Passing to the limit and using the Milnor exact sequence
   \begin{equation*}
   	\begin{split}
   		0 & \longrightarrow {\lim\limits_{i}}^1\pi_{*+1}(\Tot^i \{|\mathrm{Bar}_{\bullet}\mathrm{coTHH}_*(R[G])|/sk_n[\mathrm{Bar}_{\bullet}\mathrm{coTHH}_*(R[G])]\})\\
   		& \longrightarrow \pi_*(\Tot |\mathrm{Bar}_{\bullet}\mathrm{coTHH}_*(R[G])|/\Tot\{sk_n[\mathrm{Bar}_{\bullet}\mathrm{coTHH}_*(R[G])]\})\\
   		& \longrightarrow \lim\limits_i\pi_*(\Tot^i \{|\mathrm{Bar}_{\bullet}\mathrm{coTHH}_*(R[G])|/sk_n[\mathrm{Bar}_{\bullet}\mathrm{coTHH}_*(R[G])]\}) \longrightarrow 0,
   	\end{split}
   \end{equation*}
   we deduce that the cofiber of $f_n$ is $n$-connective, which proves the claim.
\end{proof}

As a corollary, we show that the computation becomes simpler when $X$ is sufficiently highly connected.

\begin{cor}
	Let $R$ be a connective $\E $-ring spectrum and $f\colon X\to B\G$ be a map of spaces with $X$ $2$-connected, then $\mathrm{coTHH}^R(\T f;R[X])$ is the Thom spectrum of $\mathrm{coTHH}^{\Ss_{/B\G}}(f;R_X)$. In other words,
	\[\mathrm{coTHH}^R(\T f;R[X])\simeq \T (\mathrm{coTHH}^{\Ss_{/B\G}}(f;R_X)).\]
\end{cor}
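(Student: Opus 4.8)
The plan is to combine Theorem \ref{thm:geo}, which exhibits $\mathrm{coTHH}^R(\T f;R[X])$ as a geometric realization, with the fact (Remark \ref{rm:col}) that $\T$ preserves colimits, thereby trading the totalization defining $\mathrm{coTHH}$ for a colimit on which $\T$ acts transparently; the $2$‑connectedness of $X$ enters to delinearize the terms of the relevant simplicial object.

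First I would record, as in the proof of Theorem \ref{thm:geo}, that the cyclic cosimplicial object $\mathrm{coTHH}^R_*(\T f;R[X])$ is the composite $\Delta\xrightarrow{\mathrm{coTHH}^{\Ss_{/B\G}}_*(f;R_X)}\Ss_{/B\G}\xrightarrow{\T}\M$, so that there is a canonical comparison map
\[\T\big(\mathrm{coTHH}^{\Ss_{/B\G}}(f;R_X)\big)=\T\big(\mathrm{Tot}\,\mathrm{coTHH}^{\Ss_{/B\G}}_*(f;R_X)\big)\longrightarrow \mathrm{Tot}\big(\T\circ\mathrm{coTHH}^{\Ss_{/B\G}}_*(f;R_X)\big)=\mathrm{coTHH}^R(\T f;R[X]),\]
coming from the fact that $\mathrm{Tot}$ is a limit; the point is to prove this map is an equivalence. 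By Theorem \ref{thm:geo} the target is $\colim_{\Delta^{op}}\mathrm{coTHH}^R(R[G^n])$, $G=\Omega X$, with the simplicial structure described there. Since $X$ is $2$‑connected, $G$ is simply connected, hence so is every power $G^{\times n}$; therefore the $R$‑linear form of the equivalence $\mathrm{coTHH}^{\mathbb{S}}(\mathbb{S}[W])\simeq\mathbb{S}[LW]$ (Example \ref{exmp:Omega}, obtained via Proposition \ref{prop:base change} applied to $R\otimes-$) gives $\mathrm{coTHH}^R(R[G^n])\simeq R[\mathrm{coTHH}^{\Ss}(G^n)]$, i.e.\ $\T$ of the constant local system on the space $\mathrm{coTHH}^{\Ss}(G^n)$.

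Next I would check that, under this identification, the entire simplicial object $[n]\mapsto\mathrm{coTHH}^R(R[G^n])$ of Theorem \ref{thm:geo} is the image under $\T$ of a simplicial object $W_\bullet$ in $\Ss_{/B\G}$ whose realization is $\mathrm{coTHH}^{\Ss_{/B\G}}(f;R_X)$ (which, by Example \ref{exmp:over BG}, is the map $LX\xrightarrow{ev_0}X\xrightarrow{f}B\G$). The faces $d_i$ for $0<i<n$ are induced by $G\times G\to G$, the face $d_n$ by $G\to *$, and the degeneracies by $*\to G$, so by naturality of $\mathrm{coTHH}^R(R[-])\simeq R[\mathrm{coTHH}^{\Ss}(-)]$ these are visibly $\T$ applied to space‑level maps over $B\G$; the only face requiring care is $d_0$, induced by the action of $G$ on $R$, i.e.\ by $f$, which is encoded by the (nullhomotopic, hence Thom‑invisible but structurally relevant) structure map to $B\G$ of the corresponding cyclic bar‑type object $W_\bullet$. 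Granting this, $\T$ preserving colimits yields $\mathrm{coTHH}^R(\T f;R[X])=\colim_{\Delta^{op}}\T(W_\bullet)=\T(\colim_{\Delta^{op}}W_\bullet)=\T(\mathrm{coTHH}^{\Ss_{/B\G}}(f;R_X))$, and unwinding the construction identifies this equivalence with the comparison map above.

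I expect the main obstacle to be exactly that last identification: presenting the $f$‑twisted simplicial object $[n]\mapsto\mathrm{coTHH}^R(R[G^n])$ as $\T$ of a simplicial object in the slice category $\Ss_{/B\G}$ computing $\mathrm{coTHH}^{\Ss_{/B\G}}(f;R_X)$ — in effect a space‑level shadow of Theorem \ref{thm:geo} inside $\Ss_{/B\G}$ — where the $2$‑connectedness is precisely what makes each level $R$‑linear over a space. An alternative, more computational route that sidesteps the simplicial bookkeeping is to estimate connectivity directly: the $n$‑th conormalization of $\mathrm{coTHH}^R_*(\T f;R[X])$ is $\T f\otimes_R\widetilde{R[X]}^{\otimes_{R} n}$, and since $\widetilde{R[X]}=R\otimes\Sigma^{\infty}X$ is $3$‑connective for $X$ $2$‑connected, this is $3n$‑connective, so the layers $\Omega^n N^n$ of the $\mathrm{Tot}$‑tower of $\T\circ\mathrm{coTHH}^{\Ss_{/B\G}}_*(f;R_X)$ are $2n$‑connective; comparing against the $\Ss_{/B\G}$‑side $\mathrm{Tot}$‑tower — which is constant from stage $1$ because $S^1_\bullet=\Delta^1/\partial\Delta^1$ has no non‑degenerate simplices above degree $1$ (Example \ref{exmp:over BG}) — one concludes that the comparison map is an isomorphism on homotopy groups in every degree, once combined with the identification of $\mathrm{coTHH}^R(\T f;R[X])$ with the colimit of Theorem \ref{thm:geo}.
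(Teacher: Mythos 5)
Your first half coincides with the paper's argument: use Theorem \ref{thm:geo} together with the fact that $G=\Omega X$ is simply connected (so Hess--Shipley applies to each $G^{\times n}$) to identify the $n$-th level as $\mathrm{coTHH}^R(R[G^n])\simeq R[LG]^{\otimes_R n}$. The genuine gap is exactly the step you flag with ``Granting this'': producing a simplicial object $W_\bullet$ in $\Ss_{/B\G}$ with $\T W_\bullet$ equal to the twisted simplicial object of Theorem \ref{thm:geo} and $|W_\bullet|\simeq (LX\xrightarrow{ev_0}X\xrightarrow{f}B\G)$. This is not a naturality check, because all of the twisting by $f$ sits in the face map $d_0$, and you never identify which group is acting there or why its delooping, as an object of $\Ss_{/B\G}$, is $LX\to B\G$. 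The paper closes this gap without lifting anything back to $\Ss_{/B\G}$: after the $2$-connected simplification the simplicial object is the two-sided bar construction $\mathrm{Bar}_\bullet(R,R[LG],R)$, so its realization is $R\otimes_{R[LG]}R$ with $LG$ acting on the left copy of $R$ through $LG\to G\to \G$; the key observation $\Omega LX\simeq L\Omega X=LG$ (coming from $LX$ being the pullback of $X\xrightarrow{\Delta}X\times X\xleftarrow{\Delta}X$) then lets one apply Theorem \ref{thm:Thf} to the map of Example \ref{exmp:over BG} and conclude $R\otimes_{R[LG]}R\simeq \T(\mathrm{coTHH}^{\Ss_{/B\G}}(f;R_X))$. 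That single equivalence $\Omega LX\simeq L\Omega X$ is the ingredient missing from your write-up; with it, your $W_\bullet$ is just $\mathrm{Bar}_\bullet(*,LG,*)$ over $B\G$ and Theorem \ref{thm:Thf} already supplies the identification you were trying to assemble by hand.

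Your fallback connectivity argument does not repair this. The maps you would need to control are the stagewise comparisons $\T(\mathrm{Tot}^n(-))\to\mathrm{Tot}^n(\T(-))$, and $\T$, being a colimit, does not commute with the finite limits $\mathrm{Tot}^n$: already $\mathrm{Tot}^1$ is a pullback-type limit in $\Ss_{/B\G}$ whose image under $\T$ need not be the corresponding limit of Thom spectra. The $2n$-connectivity of the layers shows the spectrum-side tower converges, and the constancy of the space-side tower identifies the source of your comparison map as $\T(LX\to B\G)$, but neither fact identifies the target with the source; your final sentence appeals back to Theorem \ref{thm:geo} for that identification, which returns you to the unproven step above.
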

\begin{proof}
	Since $X$ is $2$-connected, $\Omega X$ is simply connected. Therefore, by \cite[Theorem 3.7]{HS21}, we have that 
	\[\mathrm{coTHH}^R(R[G^n])\simeq R[LG^n]\simeq R[LG]\otimes_R\cdots\otimes_R R[LG],\]
	and hence, by Theorem \ref{thm:geo}, we obtain that
	\begin{equation}\label{eq:LG}
	\mathrm{coTHH}^R(\T f;R[X])\simeq R\otimes_{R[LG]}R.		
	\end{equation}
	Note that $LX$ is the pullback of the cospan $X\xrightarrow{\Delta} X\times X \xleftarrow{\Delta} X$, hence $\Omega LX\simeq L\Omega X$. Consequently,  $\mathrm{coTHH}^R(\T f;R[X])$ itself is a Thom spectrum of the map $\mathrm{coTHH}^{\Ss_{/B\G}}(f;R_X)\in \Ss_{/B\G}$ by Example \ref{exmp:over BG}, and the equivalence follows from Equivalence \ref{eq:LG} and Theorem \ref{thm:Thf}.
\end{proof}

\begin{exmp}\label{exm:T and Omega}
	If we further assume that $X$ is an $\mathbb{E}_1$-space, then the map $X\times \Omega X\to LX$ is an equivalence, and $\mathrm{coTHH}^{\Ss_{/B\G}}(f;R_X)\in \Ss_{/B\G}$ factors as
	\[\mathrm{coTHH}^{\Ss_{/B\G}}(f;R_X): LX\simeq X\times \Omega X \xrightarrow{pr_1} X\to B\G,\]
	and hence $\mathrm{coTHH}^R(\T f;R[X])\simeq \T f[\Omega X]$.	
\end{exmp}

In the above theorem, we view the Thom spectrum as a relative tensor product. This allows us to use its skeleton filtration to transform the computation of $\mathrm{coTHH}$ with a Thom spectrum coefficient into a problem involving two steps: first, computing $\mathrm{coTHH}$ of simplicial coalgebras, and second, taking the geometric realization of the resulting simplicial object. A limitation of this approach is that it lowers the connectivity of spaces by $1$ (taking the loop spaces). Consequently, we may not use the coB\"okstedt spectral sequence to compute $A$-homology of $\mathrm{coTHH}^R(R[G^n])$. A detailed treatment of this aspect is provided in \cite[Section 4]{BGT+18}. Nonetheless, the following corollary allows us to bypass this problem.

\begin{cor}
	Under the assumption of Theorem \ref{thm:geo}, let $A$ be a connective $\E$-$R$-algebra, then there is an equivalence
	 \[\mathrm{Ind}^A_R(\mathrm{coTHH}^R(\T f;R[X]))\simeq |\mathrm{coTHH}^A(A[G^n])|,\]
	  and face maps and degeneracy maps of $\{\mathrm{coTHH}^A(A[G^n])\}_{n\geq0}$ are similar to those of Theorem \ref{thm:geo}.
\end{cor}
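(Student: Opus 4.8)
The plan is to base-change the entire spectrum $\mathrm{coTHH}^R(\T f;R[X])$ in one stroke, rather than term-by-term along the simplicial object $\colim_{\Delta^{op}}\mathrm{coTHH}^R(R[G^n])$ of Theorem \ref{thm:geo}. One might first try to commute $\mathrm{Ind}_R^A$ past the geometric realization and then base-change each $\mathrm{coTHH}^R(R[G^n])$ individually, but Proposition \ref{prop:base change} does not apply to the coalgebras $R[G^n]$: for precisely the reason indicated before the corollary, its coaugmentation coideal is only $1$-connective, since $X$ simply connected forces $G=\Omega X$ to be merely connected, whereas the hypothesis demands $2$-connectivity. The coalgebra $R[X]$ itself, however, does satisfy this hypothesis --- its coaugmentation coideal, the fiber of the counit $R[X]\to R$, is $2$-connective because $X$ is simply connected --- so I would apply Proposition \ref{prop:base change} directly to the pair $(\T f,R[X])$ and only afterwards re-run Theorem \ref{thm:geo}, now over $A$.

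Concretely, I would first record that $\mathrm{Ind}_R^A=-\otimes_R A$ is a symmetric monoidal left adjoint, hence colimit-preserving, so it carries the Thom spectrum $\T f=\colim_X(X\to B\G\to\M)$ to the Thom spectrum over $A$ of the composite local system $X\xrightarrow{f}B\G\xrightarrow{\mathrm{Ind}_R^A}BGL_1(A)$, carries $R[X]=R\otimes\Si X$ to $A\otimes\Si X=A[X]$, and transports the $\E$-coalgebra and comodule structures of Theorem \ref{thm:comodule}; this is the base-change naturality already recorded in Remark \ref{rem:ind}. The hypotheses of Proposition \ref{prop:base change} are then in force: $R$ and $A$ are connective (the latter, as in Theorem \ref{thm:geo}, being part of the running assumptions), $C=R[X]$ is coaugmented along $R\to R[X]$ with $2$-connective coideal since $X$ is simply connected, and $M=\T f$ is connective because $R$ is connective and $X$ is connected. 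This yields
\[
\mathrm{Ind}_R^A(\mathrm{coTHH}^R(\T f;R[X]))\ \simeq\ \mathrm{coTHH}^A(\mathrm{Ind}_R^A(\T f);A[X]),
\]
in which, by the previous sentence, $\mathrm{Ind}_R^A(\T f)$ is a Thom spectrum over $A$ and $A[X]$ is its associated ``trivial'' Thom spectrum.

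Finally I would apply Theorem \ref{thm:geo} itself with $(R,f)$ replaced by $(A,\mathrm{Ind}_R^A\circ f)$ --- the hypotheses persist ($A$ a connective $\E$-ring, a map $X\to BGL_1(A)$, $X$ simply connected) and the loop space $G=\Omega X$ is unchanged --- to obtain
\[
\mathrm{coTHH}^A(\mathrm{Ind}_R^A(\T f);A[X])\ \simeq\ \colim_{\Delta^{op}}\mathrm{coTHH}^A(A[G^n]),
\]
with $d_0$ the base-changed action of $G$ on $A$, the $d_i$ for $0<i<n$ induced by the multiplication of $G$, $d_n$ the augmentation $A[G]\to A$, and the degeneracies the unit $A\to A[G]$, exactly as in Theorem \ref{thm:geo}; composing the two displayed equivalences gives the corollary. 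The one genuine obstacle is to keep this reduction honest: one must verify that Proposition \ref{prop:base change} is invoked only for $R[X]$, where the connectivity hypothesis holds, and never for the pieces $R[G^n]$, where it fails, and that $\mathrm{Ind}_R^A(\T f)$ --- with its $\E$-$A$-coalgebra/comodule data --- really is the Thom-spectrum datum fed into Theorem \ref{thm:geo} over $A$. The rest (commuting $\mathrm{Ind}_R^A$ past the relevant colimits, and matching the face and degeneracy maps) is formal. If one prefers not to assume $A$ connective, the statement should be restricted accordingly, or supplemented by a separate connectivity estimate replacing Proposition \ref{prop:base change}.
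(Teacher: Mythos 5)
Your proof is correct and follows essentially the same route as the paper's, which likewise deduces the corollary from Proposition \ref{prop:base change}, Remark \ref{rem:ind}, and Theorem \ref{thm:geo} applied over $A$ to the composite $X\to Pic(R)\to Pic(A)$. Your observation that the base change must be performed on the pair $(\T f, R[X])$ --- whose coaugmentation coideal is $2$-connective --- rather than termwise on the $R[G^n]$, where the connectivity hypothesis fails, is precisely the point the paper's terse proof leaves implicit.
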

\begin{proof}
	Since $\mathrm{Ind}^A_R(\T f)$ is the Thom spectrum of the functor $X\xrightarrow{f} Pic(R)\xrightarrow{-\otimes_R A} Pic(A)$, the corollary follows immediately from Proposition \ref{prop:base change}, Remark \ref{rem:ind} and Theorem \ref{thm:geo}.
\end{proof}

Given a map between simply connected spaces $X\to Y$, we can view $\T f$ as an $R[Y]$-comodule via the forgetful functor. Furthermore, let $G$ and $G^{\prime}$ be loop spaces of $X$ and $Y$ respectively, then $R[G]$ is an $R[G^{\prime}]$-comodule. A proof similar to that of Theorem \ref{thm:geo} gives the following theorem.

\begin{thm}
	Let $R$ be a connective $\E$-ring, $X\to Y$ be a map between simply connected spaces and $f:X\to Pic(R)$ be a based map, then there is an equivalence
	\[\mathrm{coTHH}^R(\T f;R[Y])\simeq |\mathrm{coTHH}^R(R[G];R[(G^{\prime})^n])|,\]
	and face maps and degeneracy maps of $\{\mathrm{coTHH}^R(R[G];R[(G^{\prime})^n])\}_{n\geq0}$ are similar to those of Theorem \ref{thm:geo}.
\end{thm}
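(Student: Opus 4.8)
The plan is to run the proof of Theorem~\ref{thm:geo} essentially verbatim, carrying along the extra bookkeeping created by having two (a priori distinct) spaces $X$ and $Y$. First I would carry out the reduction inside $\Ss_{/B\G}$. By Theorem~\ref{thm:comodule} the constant local system $R_X$ is an $\E$-coalgebra in $\Ss_{/B\G}$ and $f$ is an $R_X$-comodule; the map $g\colon X\to Y$ induces a map of $\E$-coalgebras $R_X\to R_Y$, and corestricting $f$ along it produces an $R_Y$-comodule which, by the dual of Lemma~\ref{lem:forget}, agrees with the $R[Y]$-comodule structure on $\T f$ obtained by forgetting along $R[X]\to R[Y]$. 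Applying the symmetric monoidal functor $\T$ (Proposition~\ref{prop:sym str}) to the cyclic cosimplicial object $\mathrm{coTHH}^{\Ss_{/B\G}}_*(f;R_Y)$ then identifies $\mathrm{coTHH}^R_*(\T f;R[Y])$ with the composite $\Delta\xr{\mathrm{coTHH}^{\Ss_{/B\G}}_*(f;R_Y)}\Ss_{/B\G}\xr{\T}\M$; its degree-$p$ term is $\T(X\times Y^p\xr{f\circ pr_X}B\G)$, and its cofaces and codegeneracies are induced, through $g$, the diagonal of $Y$, and the projections (cf.\ Example~\ref{exmp:over BG} for the underlying spaces).

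Next I would rewrite every term and structure map through bar constructions, exactly as in the proof of Theorem~\ref{thm:geo}. The space $X\times Y^p$ is connected with $\Omega(X\times Y^p)\simeq G\times (G')^p$ for $G=\Omega X$ and $G'=\Omega Y$, so Theorem~\ref{thm:Thf} gives $\T(X\times Y^p\xr{f\circ pr_X}B\G)\simeq|\mathrm{Bar}^R_\bullet(R,R[G\times (G')^p],R)|$, with the first copy of $R$ carrying the $f$-action of the $G$-factor and the trivial action of the $(G')^p$-factor; equivalently, this follows by combining the bar resolutions of $\T f\simeq R\otimes_{R[G]}R$ and of $R[Y]\simeq R\otimes_{R[G']}R$ and using that $\Delta^{op}$ is sifted. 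Proposition~\ref{cor:map} identifies the cofaces and codegeneracies in $p$ with the realizations of $\mathrm{Bar}^R_\bullet(R,-,R)$ applied to the maps of group algebras induced by $\Omega g$, the diagonal and multiplications of $G'$, and the units; hence $\mathrm{coTHH}^R_*(\T f;R[Y])$ is the diagonal realization, in a single simplicial variable $n$, of a simplicial object of cosimplicial $R$-modules whose $n$-th term is $\mathrm{coTHH}^R_*(R[G^n];R[(G')^n])$, where $R[G^n]$ is the $R[(G')^n]$-bicomodule induced by the coalgebra map $R[(\Omega g)^{\times n}]$ and the face and degeneracy maps are patterned on those of Theorem~\ref{thm:geo} (the action of $G$ on $R$, the multiplications of $G$ and $G'$, the augmentations $R[G]\to R$ and $R[(G')^n]\to R$, and the units). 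Thus $\mathrm{coTHH}^R(\T f;R[Y])$ is the totalization over $p$ of the geometric realization over $n$ of this object, and it remains to interchange the two.

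This interchange is the main obstacle, and it is handled exactly as in the proof of Theorem~\ref{thm:geo}. Via the $\infty$-categorical Dold--Kan correspondence (\cite[Theorem 1.2.4.1]{HA}) one writes the realization as $\colim_n sk_n$ and reduces to showing that the $n$-th comparison map has $n$-connective cofiber; replacing $N(\Delta^{op}_{\leq n})$ by the cofinal finite poset $\mathcal{P}([n])^{op}$ exhibits every $\Tot$ in sight as a finite limit, so it commutes with sequential colimits and with the formation of latching objects. The essential input is Lemma~\ref{lem:0-con}: since $Y$ is simply connected, $G'=\Omega Y$ is connected, so the coaugmentation coideal of $R[(G')^n]$ is $1$-connective, and $\Tot^i\mathrm{coTHH}^R_*(R[G^n];R[(G')^n])$ is connective uniformly in $i$ and $n$; its latching objects, being finite colimits of connective spectra, are then connective, and the associated graded of the skeletal filtration of the comparison map is increasingly connective. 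Feeding this through the finiteness of $\Tot^i$ and the Milnor exact sequences (as in Lemma~\ref{lem:0-con} and the proof of Theorem~\ref{thm:geo}) shows that the cofiber of the $n$-th comparison map is $n$-connective, and passing to the colimit yields the equivalence. The description of the face and degeneracy maps of $\{\mathrm{coTHH}^R(R[G^n];R[(G')^n])\}_{n\geq0}$ then follows by unwinding the bar constructions through Proposition~\ref{cor:map} and Theorem~\ref{thm:Thf}, just as in Theorem~\ref{thm:geo}, the only difference being that the single group $G$ of that theorem is replaced by $G=\Omega X$ on the coefficient side and $G'=\Omega Y$ on the coalgebra side.
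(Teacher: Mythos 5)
Your proposal is correct and is exactly the argument the paper intends: its entire proof of this theorem is the single sentence ``use $\T(f,R_Y,\cdots,R_Y)\simeq R\otimes_{R[\Omega X\times\Omega Y\times\cdots\times\Omega Y]}R$ from Theorem \ref{thm:Thf}'' and rerun the proof of Theorem \ref{thm:geo}, which is precisely what you carry out, including the correct connectivity input ($G'=\Omega Y$ connected, so Lemma \ref{lem:0-con} applies) for the $\mathrm{Tot}$--realization interchange. One substantive remark: your bookkeeping produces $\mathrm{coTHH}^R(R[G^n];R[(G')^n])$ in simplicial degree $n$, since $\mathrm{Bar}_n(R,R[G\times(G')^p],R)\simeq R[G^n\times((G')^n)^p]=\mathrm{coTHH}^R_p(R[G^n];R[(G')^n])$, and I believe you are right and the $R[G]$ in the printed statement is a typo for $R[G^n]$ --- specializing to $X=Y$ and $g=\mathrm{id}$, the statement must recover Theorem \ref{thm:geo}, whose $n$-th term is $\mathrm{coTHH}^R(R[G^n])=\mathrm{coTHH}^R(R[G^n];R[G^n])$, not $\mathrm{coTHH}^R(R[G];R[G^n])$.
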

\begin{proof}
	The result follows by applying the proof of Theorem \ref{thm:geo} to the equivalence $\T(f,R_Y,\cdots,R_Y)\simeq R \otimes_{R[\Omega X\times \cdots\times\Omega Y]}R$ of Theorem \ref{thm:Thf}.
\end{proof}

Finally, we consider the case where the simply connected space $X$ admits an $\mathbb{E}_1$-structure. In this case, $\mathrm{coTHH}^R(\T f;R[X])$ admits some additional structure. For example, combining with Example \ref{exmp:Omega}, we obtain a natural equivalence
\begin{equation*}
	R[\Omega X]\simeq \mathrm{coTHH}^R(R;R[X]).
\end{equation*}
Furthermore, the multiplication on $X$ induces a map
\[\mathrm{coTHH}^R(R[X])\otimes_R \mathrm{coTHH}^R(R;R[X])\simeq R[LX\times \Omega X]\to R[L X]\simeq \mathrm{coTHH}^R(R[X]).\]
Consequently, $\mathrm{coTHH}^R(R[X])$ is a right $\mathrm{coTHH}^R(R;R[X])$-module. Note that the multiplication on $\Omega X$ here is induced by that on $X$ rather than the composition of loops. Nevertheless, they are identical by an Eckmann-Hilton argument. Similarly, Example \ref{exm:T and Omega} shows that $\mathrm{coTHH}^R(\T f;R[X])$ is also a right $R[\Omega X]$-module, when $X$ is a $2$-connected $\mathbb{E}_1$-space. We end the paper by proving the following more general result. That is, the last formula in Example \ref{exm:T and Omega} is true when the $\mathbb{E}_1$-space $X$ is only simply connected.

\begin{prop}\label{prop:omega}
	Let $R$ be a connective $\E$-ring and $X$ be a simply connected $\mathbb{E}_1$-space. Given a map of spaces $f\colon X\to B\G$, there is an equivalence
	\[\mathrm{coTHH}^R(\T f;R[X])\simeq \T f[\Omega X].\]
\end{prop}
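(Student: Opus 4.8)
The plan is to exploit the already-established tools from Section \ref{sec4}, particularly Theorem \ref{thm:geo}, which reduces $\mathrm{coTHH}^R(\T f;R[X])$ to the geometric realization of the simplicial $R$-module $\{\mathrm{coTHH}^R(R[G^n])\}_{n\geq0}$, where $G=\Omega X$. The key observation is that the $\mathbb{E}_1$-structure on $X$ endows $G=\Omega X$ with additional structure: it becomes an $\mathbb{E}_1$-monoid in $\mathbb{E}_1$-spaces, i.e. a double loop space up to the relevant homotopy coherence, so that by Eckmann--Hilton the two monoid structures agree and the multiplication map $G\times G\to G$ is itself a map of $\mathbb{A}_\infty$-spaces. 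First I would observe that with this extra structure, the simplicial object in Theorem \ref{thm:geo} splits: the face map $d_0$ (induced by the action of $G$ on $R$ via $f$) becomes, after using the $\mathbb{E}_1$-structure, the \emph{sum} of a trivial action and the genuine one, in a way parallel to Example \ref{exm:T and Omega} where $X\times\Omega X\xrightarrow{\sim}LX$.

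More concretely, the approach I would take is to directly identify the simplicial $R$-module $\{\mathrm{coTHH}^R(R[G^n])\}$ with the tensor product (over $\Delta^{op}$, degreewise) of two simpler simplicial objects: one computing $\T f$ and another computing $R[\Omega X]$. Here I would use Proposition \ref{prop:omega}'s analogue in the group ring case, namely the equivalence $\mathrm{coTHH}^R(R;R[X])\simeq R[\Omega X]$ from Example \ref{exmp:Omega}, together with the module structure statements discussed just before the proposition: $\mathrm{coTHH}^R(R[X])$ is a right $\mathrm{coTHH}^R(R;R[X])\simeq R[\Omega X]$-module, and the relevant colimit over $\Delta^{op}$ should be recognized as a two-sided bar construction. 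The second main step is to show that when $X$ is an $\mathbb{E}_1$-space, the relevant bar construction degenerates: the action of $R[\Omega X]$ on $\T f$ in the simplicial diagram is, up to the Eckmann--Hilton swap, a \emph{trivial} (or free) action, so that $\colim_{\Delta^{op}}$ of the bar construction collapses to the tensor product $\T f\otimes_R R[\Omega X]\simeq \T f[\Omega X]$. Alternatively, and perhaps more cleanly, I would try to run the proof of the Corollary following Theorem \ref{thm:geo} but \emph{without} the $2$-connectivity hypothesis: that Corollary used $\mathrm{coTHH}^R(R[G^n])\simeq R[LG^n]$ (valid when $\Omega X$ is simply connected) to get $\T f\simeq R\otimes_{R[LG]}R$ and then invoked the $\mathbb{E}_1$-splitting $LG\simeq G\times\Omega G$. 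For merely simply connected $X$, $\Omega X$ need not be simply connected, so $\mathrm{coTHH}^R(R[\Omega X])\neq R[L\Omega X]$ in general; this is exactly where the difficulty lies.

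The hard part will therefore be handling the failure of the naive identification $\mathrm{coTHH}^R(R[G^n])\simeq R[LG^n]$ when $G=\Omega X$ is not simply connected. To get around this I would not try to compute $\mathrm{coTHH}^R(R[G^n])$ on the nose; instead I would argue that the \emph{simplicial} structure in Theorem \ref{thm:geo} is built from the map $\mathrm{Bar}_\bullet(\mathrm{id}_R,\Delta,\mathrm{id}_R)$ (the coaction, coming from the diagonal $G\to G\times G$) and that, under the $\mathbb{E}_1$-assumption, the diagonal map fits into a splitting of simplicial objects. Concretely, the multiplication $\mu\colon G\times G\to G$ and the diagonal together give an automorphism-up-to-homotopy of $G\times G$ realizing the shear, and this promotes to an equivalence of the bicosimplicial/bisimplicial diagram computing $\mathrm{coTHH}^R(\T f;R[X])$ with the external tensor product of the diagram computing $\mathrm{coTHH}^R(\T f;R[*])\simeq \T f$ and the one computing $\mathrm{coTHH}^R(R;R[X])\simeq R[\Omega X]$ from Example \ref{exmp:Omega}. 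Since geometric realization is symmetric monoidal (being a sifted colimit) and commutes with the relevant finite totalizations by the connectivity estimates of Lemma \ref{lem:0-con} and the proof of Theorem \ref{thm:geo}, taking $\colim_{\Delta^{op}}$ then yields $\T f\otimes_R R[\Omega X]=\T f[\Omega X]$. I expect the bookkeeping of the shear equivalence at the level of (co)simplicial diagrams — making precise that the Eckmann--Hilton argument upgrades to the required coherent equivalence of functors out of $\Delta\times\Delta^{op}$ — to be the genuinely technical point; everything else reduces to results already proved in the paper.
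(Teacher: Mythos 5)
Your guiding intuition---that the $\mathbb{E}_1$-structure on $X$ yields a splitting $LX\simeq X\times\Omega X$ which should force $\mathrm{coTHH}^R(\T f;R[X])$ to collapse to $\T f\otimes_R R[\Omega X]$---is the right one, but the place where you apply it is where the argument breaks, and several of your supporting claims are false as stated. First, the assertion that ``the action of $R[\Omega X]$ on $\T f$ in the simplicial diagram is, up to the Eckmann--Hilton swap, a trivial (or free) action'' cannot be correct: in the simplicial object of Theorem \ref{thm:geo} the face map $d_0$ is induced by the action of $G=\Omega X$ on $R$ classified by $\Omega f\colon \Omega X\to \G$, and this action is precisely the datum that distinguishes $\T f$ from $R[X]$. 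Eckmann--Hilton identifies the two multiplications on $\Omega X$; it says nothing about the action of $\Omega X$ on $R$, and if that action were trivializable one would conclude $\T f\simeq R[X]$, which is false in general. Relatedly, the claim that $d_0$ ``becomes the sum of a trivial action and the genuine one'' has no meaning at the level of face maps, and a two-sided bar construction over $R[\Omega X]$ with trivial action does not realize to a plain tensor product. Second, your main mechanism---a shear equivalence of the bicosimplicial diagram $\mathrm{Bar}_{\bullet}\mathrm{coTHH}_*(R[G])$ with an external tensor product---is deferred entirely to ``bookkeeping,'' but it is exactly the content of the proposition: the shear in the cosimplicial (coTHH) direction mixes the $G$-factors, after which the bar-direction face map $d_0$ no longer acts through a single distinguished factor, and no coherence argument is supplied. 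You correctly diagnose that $\mathrm{coTHH}^R(R[G^n])\not\simeq R[LG^n]$ when $\Omega X$ is not simply connected, but the proposed fix does not actually circumvent this.

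For comparison, the paper's proof never touches the simplicial presentation of Theorem \ref{thm:geo}. It invokes the cotensor-product formula $\mathrm{coTHH}^R(\T f;R[X])\simeq \T f\,\square_{R[X]}\mathrm{coTHH}^R(R[X])$ from \cite[Proposition 4.6]{Z25}, identifies the coefficient as $\mathrm{coTHH}^R(R[X])\simeq R[LX]$ (legitimate because $X$ itself is simply connected, so the non-simply-connectedness of $\Omega X$ never arises), and then uses Example \ref{exm:T and Omega}: the $R[X]$-comodule structure on $R[LX]$ is induced by $ev_0\colon LX\to X$, which under the $\mathbb{E}_1$-splitting $LX\simeq X\times\Omega X$ becomes the projection. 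Hence $R[LX]\simeq R[X]\otimes R[\Omega X]$ is an extended comodule, the cobar construction computing the cotensor product splits, and $\T f\,\square_{R[X]}(R[X]\otimes R[\Omega X])\simeq \T f[\Omega X]$. In other words, the $\mathbb{E}_1$-structure is applied to the coefficient bicomodule $\mathrm{coTHH}^R(R[X])$, not to the coaction on $\T f$; redirecting your shear to that spot is what would repair the argument.
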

\begin{proof}
	By \cite[Proposition 4.6]{Z25}, there is an equivalence 
	\[\mathrm{coTHH}^R(\T f;R[X])\simeq \T f\square_{R[X]}\mathrm{coTHH}^R(R[X])\simeq \T f\square_{R[X]}R[LX].\]
	By Example \ref{exm:T and Omega}, the $R[X]$-comodule structure on $R[LX]$ is induced by the map $LX\simeq X\times \Omega X\xrightarrow{pr_1} X$. Hence, as an $R[X]$-comodule, $R[LX]$ is equivalent to the tensor product of $(R[X],R[X])$ and $(R[\Omega X],R)$ in the category $\mathrm{CoMod}(\M)$ (see \cite[Proposition 3.2.4.3]{HA}). Now, by the splitting of the cobar construction $\Omega^{\bullet}_R(\T f,R[X],R[X])$, we obtain that 
	\[\mathrm{coTHH}^R(\T f;R[X])\simeq \T f\square_{R[X]}R[LX]\simeq \T f\square_{R[X]}(R[X]\otimes R[\Omega X])\simeq \T f[\Omega X].\]
\end{proof}

\begin{cor}
	Let $R$ be a connective $\E$-ring and $X$ be a simply connected $\mathbb{E}_1$-space. Given a map of spaces $f\colon X\to B\G$, the $R$-module $\mathrm{coTHH}^R(\T f;R[X])$ is a right $R[\Omega X]$-module in the category $\M$.
\end{cor}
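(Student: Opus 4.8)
The plan is to deduce the corollary directly from Proposition \ref{prop:omega}, using that $R[\Omega X]$ is an $\mathbb{E}_1$-$R$-algebra and that $\T f[\Omega X]$ is the corresponding free right module.

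First I would record that $\Omega X$ carries a natural group-like $\mathbb{E}_1$-structure: loop concatenation always makes $\Omega X$ into a group-like $\mathbb{A}_\infty$-space, and when $X$ is itself an $\mathbb{E}_1$-space this structure agrees with the one inherited from the multiplication on $X$ by an Eckmann--Hilton argument (as already noted in the discussion preceding the corollary). Applying $\Sigma^{\infty}_+$ yields an $\mathbb{E}_1$-ring $\Sigma^{\infty}_+\Omega X$, and since $R$ is $\E$ the tensor product $R[\Omega X]=R\otimes\Sigma^{\infty}_+\Omega X$ is an $\mathbb{E}_1$-$R$-algebra. Next, the $R$-module $\T f[\Omega X]=\T f\otimes\Sigma^{\infty}_+\Omega X\simeq \T f\otimes_R R[\Omega X]$ is by construction the free right $R[\Omega X]$-module on $\T f$, hence in particular a right $R[\Omega X]$-module in $\M$. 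Proposition \ref{prop:omega} furnishes an equivalence of $R$-modules $\mathrm{coTHH}^R(\T f;R[X])\simeq \T f[\Omega X]$, and transporting this module structure along that equivalence proves the corollary.

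The only point that deserves care, although it is not needed for the bare existence statement, is checking that the module structure obtained this way agrees with the one sketched just before the corollary in the $2$-connected case. I would verify this by tracing the proof of Proposition \ref{prop:omega}: the object $R[LX]\simeq R[X\times\Omega X]$ carries a right $R[\Omega X]$-action coming from the multiplication on $\Omega X$, and by Example \ref{exm:T and Omega} this action commutes with the $R[X]$-coaction (which only sees the $X$-factor via $pr_0$). Hence the cotensor product $\T f\square_{R[X]}R[LX]$ inherits a right $R[\Omega X]$-action, and the splitting $R[LX]\simeq R[X]\otimes R[\Omega X]$ of $R[X]$-comodules used in that proof identifies this action with the free-module structure above. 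I expect this compatibility to be the only mildly technical part; the existence of the module structure itself is immediate once Proposition \ref{prop:omega} is in hand.
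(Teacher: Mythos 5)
Your proposal is correct and matches the paper's (implicit) argument: the corollary is stated without proof precisely because it follows immediately from Proposition \ref{prop:omega}, with $\T f[\Omega X]\simeq \T f\otimes_R R[\Omega X]$ being the free right module over the $\mathbb{E}_1$-$R$-algebra $R[\Omega X]$, exactly as you say. Your extra compatibility check against the $2$-connected case of Example \ref{exm:T and Omega} is a reasonable addition but not something the paper carries out.
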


\bibliographystyle{plain}
\bibliography{ref}
	
\end{document}